\newtheorem*{convention*}{Convention}
\newtheorem{thm}{Theorem}[section]
\newtheorem*{thm*}{Theorem}
\newtheorem{lemma}[thm]{Lemma}
\newtheorem{conj}[thm]{Conjecture}
\newtheorem*{conj*}{Conjecture}
\newtheorem{cor}[thm]{Corollary}
\newtheorem{prop}[thm]{Proposition}
\theoremstyle{remark} \newtheorem{remark}{Remark}
\theoremstyle{definition} \newtheorem{ex}{Example} 
\newtheorem*{defn}{Definition}
\newcommand{\bbC}{\mathbb{C}}
\newcommand{\bbP}{\mathbb{P}}
\newcommand{\bbZ}{\mathbb{Z}}
\newcommand{\calO}{\mathcal{O}}
\newcommand{\calT}{\mathcal{T}}
\newcommand{\calN}{\mathcal{N}}
\newcommand{\calH}{\mathcal{H}}
\newcommand{\calIC}{\mathcal{IC}_X^\bullet}
\newcommand{\calICT}{\mathcal{IC}_{\tau_{m,n,k}}^\bullet}
\newcommand{\calCC}{\mathcal{CC}}
\newcommand{\calF}{\mathcal{F}}
\newcommand{\codim}{\operatorname{codim}}
\newcommand{\rk}{\operatorname{rk}}
\newcommand{\tr}{\operatorname{trace}}
\newcommand{\ind}{\mathds{1}}
\newcommand\numberthis{\addtocounter{equation}{1}\tag{\theequation}}
\begin{document}
\bibliographystyle{plain}

\title{Chern classes and Characteristic Cycles of Determinantal Varieties}
\author[rvt]{Xiping Zhang}

\begin{abstract}
Let $K$ be an algebraically closed field of characteristic $0$.
For $m\geq n$, we define $\tau_{m,n,k}$ to be the set of $m\times n$ matrices 
over $K$ with kernel dimension $\geq k$. This is a projective subvariety 
of $\bbP^{mn-1}$, and is called the (generic) determinantal variety.
In most cases $\tau_{m,n,k}$ is singular with singular locus 
$\tau_{m,n,k+1}$. 
In this paper we give explicit formulas computing
the Chern-Mather class ($c_M$) and the Chern-Schwartz-MacPherson class ($c_{SM}$) of $\tau_{m,n,k}$, 
as  classes in the projective space. 
We also obtain formulas for the conormal cycles and
the characteristic cycles of these varieties, and for their generic Euclidean
Distance degree. Further, when $K=\bbC$, we prove that the 
characteristic cycle of the intersection cohomology sheaf of a determinantal
variety agrees with its conormal cycle (and hence is irreducible).

Our formulas are based on calculations of
degrees of certain Chern classes of the universal bundles over the Grassmannian.
For some small values of $m,n,k$, we use Macaulay2 to exhibit examples of the Chern-Mather classes, the Chern-Schwartz-MacPherson classes and the classes of characteristic cycles 
of $\tau_{m,n,k}$. 

On the basis of explicit computations in low dimensions, 
we formulate conjectures concerning the effectivity of the classes and the 
vanishing of specific terms in the Chern-Schwartz-MacPherson classes of the largest strata
$\tau_{m,n,k}\smallsetminus \tau_{m,n,k+1}$.

The irreducibility of the characteristic cycle of the intersection cohomology sheaf follows 
from the Kashiwara-Dubson's microlocal index theorem, a study of the 
`Tjurina transform' of $\tau_{m,n,k}$,  and the recent computation of the local Euler obstruction of $\tau_{m,n,k}$ .
\end{abstract}
\maketitle

\section{Introduction}
Let $K$ be an algebraically closed field of characteristic $0$.
For $m\geq n$, we define $\tau_{m,n,k}$ to be the set of all $m$ by $n$ matrices over 
$K$ with kernel dimension $\geq k$. This is an irreducible projective subvariety of $\bbP^{mn-1}$, and in most cases $\tau_{m,n,k}$ is singular with singular locus $\tau_{m,n,k+1}$. The varieties $\tau_{m,n,k}$ 
are called (generic) determinantal varieties, and have been the object of intense study. 
(See e.g.,~\cite{PP1},~\cite[\S14.4]{INT},~\cite[Lecture 9]{AG-JH}.)

The computation of invariants of $\tau_{m,n,k}$ is a natural task.
For example, when $K=\bbC$, all varieties $\tau_{n,n,k}$, $k=0,1,\cdots n-1$ have the same 
Euler characteristic. This is because
under the group action $T=(\bbC^*)^{ m+n}$, 
they share the same fixed points set: all matrices with exactly one non-zero entry. This is a discrete 
set of $n^2$ points, and since the Euler characteristic is only contributed by fixed 
points~\cite[Theorem 1.3]{MR3165186}, one has $\chi(\tau_{n,n,k})=n^2$ for all $k$.

In the smooth setting, the Euler characteristic is the degree of the total Chern class. However, in most cases $\tau_{m,n,k}$ is singular, with singular 
locus $\tau_{m,n,k+1}$. There is an extension of the notion of total Chern class
to a singular variety $X$, defined by R.~D.~MacPherson in \cite{MAC} over $\bbC$, and generalized to arbitrary algebraically closed field of characteristic $0$ by G. Kennedy in~\cite{Kennedy}. This is the Chern-Schwartz-MacPherson class, denoted by $c_{SM}(X)$, and also has the property that the degree of $c_{SM}(X)$ equals the Euler characteristic of $X$
without any smoothness assumption on $X$. Two important ingredients MacPherson used to define $c_{SM}$ class are the local Euler obstruction and the Chern-Mather class, denoted by $Eu_X$ and $c_M(X)$ respectively. They were originally defined on $\bbC$, but can be generalized to arbitrary base field. 

In this paper we obtain formulas
(Theorem~\ref{thm; cmalgorithm}, Corollary~\ref{thm; csmalgorithm}) that explicitly 
compute the Chern-Mather class and the Chern-Schwartz-MacPherson class of $\tau_{m,n,k}$ over $K$. The information carried by these classes
is equivalent to the information of the conormal and the characteristic cycles.
We give formulas for these cycles (Prop~\ref{prop; contau}), as an application
of our formulas for the Chern-Mather and Chern-Schwartz-MacPherson classes. In particular, this yields an expression for the polar degrees and the generic Euclidean distance degree (\cite{MR3451425}) of
determinantal varieties (Prop~\ref{prop; ged}). For the determinantal hypersurface, our results indicate that the generic Euclidean Distance degree is related to the number of
normal modes of a Hamiltonian system.
We also prove that the characteristic cycle associated
with the intersection cohomology sheaf of a determinantal variety equals its conormal 
cycle, hence is irreducible (Theorem~\ref{thm; charcycle}). 
In \S\ref{example} we work out several explicit examples in low dimension. These examples
illustrate several patterns in the coefficients of the Chern classes and characteristic
cycles, for which we can give general proofs (\S\ref{S; dual}). The examples also suggest
remarkable effectivity and vanishing properties, which we formulate as precise
conjectures (\S\ref{conjectures}).

In~\cite{PP1} and~\cite{PP2} Adam Parusi\'nski and Piotr Pragacz computed the $c_{SM}$ class
of the rank $k$ degeneracy loci over $\bbC$. In their paper, for a $k$-general morphism of vector bundles
$\varphi\colon F\to E$ over $X$, they define
\[
D_k(\varphi)=\{(x,\varphi)|\dim \ker(\varphi_x\colon F_x\to E_x)\geq k\}\subset Hom(F,E)
\]
and give a formula~\cite[Theorem 2.1]{PP1} for the pushforward of $c_{SM}(D_k(\varphi))$ in $A_*(X)$.
For $X=\bbP^{mn-1}$ the projective space of $m$ by $n$ matrices, $F=\calO^{\oplus n}$
the $\rk n$ trivial bundle, $E=\calO(1)^{\oplus m}$ and $\varphi$ the morphism that acts on fiber over
$x\in X$ as the corresponding matrix, the degeneracy loci $D_k(\varphi)$ equals $\tau_{m,n,k}$.
Therefore the case of determinantal varieties considered in this paper is a particular case of the case
considered in~\cite{PP1}. However, the formulas obtained in this paper are very explicit and can be
implemented directly with a software such as Macaulay2~\cite{M2}, in particularly the package Schubert2.
Further, Parusi\'nski and Pragacz work over the complex numbers; the results in this paper hold over arbitrary algebraically closed fields of characteristic $0$.

We recall the definition of the Chern-Mather class and the Chern-Schwartz-MacPherson class, and 
some basic properties of them in~\S\ref{S; c_*}. In \S\ref{S; determinantalvar} we concentrate on the determinantal varieties, and introduce a resolution $\nu\colon \hat\tau_{m,n,k}\to \tau_{m,n,k}$. This is the resolution used by Tjurina in \cite{Tju}, and we call it the Tjurina transform. This is also the desingularization $Z_r$ used in \cite{PP1} when $K=\bbC$.  Parusi{\'n}ski and Pragacz observe that this is a small desingularization in the proof of \cite[Theorem 2.12]{PP1}, where they compute the Intersection Homology Euler characteristic of degeneracy loci. 

In \S\ref{cmather}, for $k\geq 1$,
we compute the $c_M$ class and the $c_{SM}$ class for $\tau_{m,n,k}$ using the Tjurina transform
$\hat\tau_{m,n,k}$, which may be constructed as a projective bundle over a Grassmannian.
The projective bundle structure gives a way to reduce the computation of $\nu_*c_{SM}(\hat\tau_{m,n,k})$
to a computation in the intersection ring of a Grassmannian. In
Theorem~\ref{thm; cmalgorithm} we give an explicit formula for $\nu_*c_{SM}(\hat\tau_{m,n,k})$,
using the degrees of certain Chern classes over a Grassmannian.
Moreover, in Lemma~\ref{lemma; cmeu} we prove that the pushforward class
$\nu_*(c_{SM}(\hat\tau_{m,n,k}))$ agrees with the Chern-Mather class $c_M(\tau_{m,n,k})$, hence the above formula actually computes $c_M(\tau_{m,n,k})$. The proof of the lemma is based on the functorial property of $c_{SM}$ class and a recent result computing the local Euler obstructions of determinantal varieties (Theorem~\ref{thm; eulerobstruction}). This result is proven for varieties over arbitrary algebraically closed fields by intersection-theoretic methods in~\cite{XP}. For determinantal varieties over $\bbC$, this formula for the local Euler obstruction was obtained earlier by T. Gaffney, N. Grulha and M. Ruas using topological methods~\cite{NG-TG}.

We define $\tau_{m,n,k}^\circ$ to be the open subset $\tau_{m,n,k}\smallsetminus\tau_{m,n,k+1}$ of $\tau_{m,n,k}$. The subsets $\{\tau_{m,n,l}^\circ \colon l\geq k\}$ are disjoint from each other, and they form a stratification of $\tau_{m,n,k}$. The functorial property of $c_{SM}$ classes under the pushforward $\nu_*$ gives a formula relating $\nu_*c_{SM}(\hat\tau_{m,n,k})$ and $c_{SM}(\tau^\circ_{m,n,l})$ for $l=k,k+1,\cdots,n-1$. This relation hence provides a way to obtain $c_{SM}(\tau_{m,n,k})$ in terms of $c_M(\tau_{m,n,l})$, $l=k,k+1,\cdots,n-1$, as shown in Corollary~\ref{thm; csmalgorithm}.

For a smooth ambient space $M$, the information of the $c_M$ and $c_{SM}$ class of a subvariety $X$ is essentially 
equivalent to the information of certain Lagrangian cycles in the total space $T^*M$, namely the  conormal cycle $T_X^*M$ of $X$ and the 
characteristic cycle $Ch(X)$ of $X$ (associated with the constant function $1$
on $X$).
In \S\ref{S; projconormal} we review the basic definition of Lagrangian cycles, and apply our results to obtain formulas for the (projectivized) conormal and characteristic cycles of $\tau_{m,n,k}$, as classes in $\bbP^{mn-1} \times \bbP^{mn-1}$ (Proposition~\ref{prop; contau}). The coefficients of the conormal cycle class are also the degrees of the polar classes. Hence our formula also computes the polar degrees of determinantal varieties $\tau_{m,n,k}$. In particular, we get an expression for the generic Euclidean degree of $\tau_{m,n,k}$.
We also include formulas for the characteristic cycles of the strata
$\tau^\circ_{m,n,k}$. As observed in~\S\ref{example}, the classes for these strata appear to satisfy interesting effective and vanishing properties.

When the base field is $\bbC$, Lagrangian cycles may also be associated to complexes of sheaves on a 
nonsingular variety $M$ which are constructible with respect to a Whitney stratification
$\{S_i\}$. In particular, when $X=\overline{S_i}$ is the closure of a stratum, we can consider the intersection cohomology sheaf $\calIC$.
This is a constructible sheaf on $M$, and we call the corresponding
characteristic cycle the `IC characteristic cycle' of $X$, denoted by $\calCC(\calIC)$. 
(For more details about the intersection cohomology sheaf and intersection homology, 
see~\cite{G-M1} and~\cite{G-M2}.) The $IC$ characteristic cycle of $X$ can be 
expressed as a linear combination of the conormal cycles of the strata:
\[
\calCC(\calIC)=\sum_{i\in I} r_i(\calIC)[T^*_{\overline{S_i}}M]\/.
\]
Here the integer coefficients  $r_i(\calIC)$ are called the \textit{Microlocal Multiplicities}. 
(See~\cite[Section 4.1]{Dimca} for an explicit construction of the
$IC$ characteristic cycle and the microlocal multiplicities.)  
Using our study of the Tjurina transform and knowledge of the local Euler
obstruction, the Dubson-Kashiwara index formula allows us to show that all but
one of the microlocal multiplicities vanish. Therefore in Theorem~\ref{thm; charcycle}
we establish the following result.
\begin{thm*}
The characteristic cycle associated with the
intersection cohomology sheaf of $\tau_{m,n,k}$ equals the conormal cycle
of $\tau_{m,n,k}$; hence, it is irreducible.
\end{thm*}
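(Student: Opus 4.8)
The plan is to show that, in the expansion
\[
\calCC(\calICT)=\sum_{l=k}^{n-1} r_l(\calICT)\,[T^*_{\tau_{m,n,l}}M],
\]
every microlocal multiplicity vanishes except $r_k(\calICT)=1$. The top coefficient $r_k=1$ is automatic: on the open dense stratum $\tau^\circ_{m,n,k}$ the complex $\calICT$ restricts to the shifted constant sheaf, which contributes the conormal variety $T^*_{\tau_{m,n,k}}M$ with multiplicity one. Everything therefore hinges on the deeper coefficients $r_l$ with $l>k$, and the whole argument reduces to an identity between two computable constructible functions on $\tau_{m,n,k}$.

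First I would compute the stalkwise Euler characteristic of $\calICT$. Because the Tjurina transform $\nu\colon\hat\tau_{m,n,k}\to\tau_{m,n,k}$ is a small resolution (as recalled in \S\ref{S; determinantalvar}), one has $\calICT\cong R\nu_*\,\bbQ_{\hat\tau_{m,n,k}}[\dim\tau_{m,n,k}]$, so that by proper base change
\[
\chi(\calICT_x)=(-1)^{\dim\tau_{m,n,k}}\,\chi\bigl(\nu^{-1}(x)\bigr)
\]
for every $x$. Using the description of $\hat\tau_{m,n,k}$ as a projective bundle over a Grassmannian, the fiber $\nu^{-1}(x)$ over a point of a stratum $\tau^\circ_{m,n,j}$ is a Grassmannian-type variety whose Euler characteristic is explicit; hence the left-hand sides become known integers indexed by $j=k,\dots,n-1$.

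Next I would invoke the Kashiwara--Dubson microlocal index theorem, which recovers these stalk Euler characteristics from the characteristic cycle through the local Euler obstructions: for $x\in\tau^\circ_{m,n,j}$, up to the standard sign conventions,
\[
\chi(\calICT_x)=\sum_{l=k}^{j} r_l(\calICT)\,(-1)^{\dim\tau_{m,n,l}}\,Eu_{\tau_{m,n,l}}(x).
\]
Substituting the closed form for $Eu_{\tau_{m,n,l}}$ from Theorem~\ref{thm; eulerobstruction} yields, as $j$ ranges over $k,\dots,n-1$, a linear system for the $r_l$. Since $Eu_{\tau_{m,n,l}}$ vanishes on $\tau^\circ_{m,n,j}$ for $j<l$ and equals $1$ for $j=l$, the coefficient matrix is triangular with unit diagonal, hence invertible, and the $r_l$ are determined uniquely.

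The final step, which I expect to be the main obstacle, is the explicit comparison: one must check that the integers produced by the Tjurina fibers in the first step, once fed through the triangular system, collapse to $r_k=1$ and $r_l=0$ for $l>k$. Equivalently, the fiber Euler characteristic $\chi(\nu^{-1}(x))$ must coincide with the single Euler obstruction $Eu_{\tau_{m,n,k}}(x)$ across all strata. This amounts to a combinatorial identity between the binomial expressions coming from Euler characteristics of Grassmannians and projective spaces on one side and the Euler-obstruction formula on the other; verifying this identity is the technical heart of the argument, and it is precisely here that the recent computation of the local Euler obstruction of $\tau_{m,n,k}$ is indispensable. Once it holds, only the conormal cycle $[T^*_{\tau_{m,n,k}}M]$ survives, so $\calCC(\calICT)$ equals the conormal cycle and is in particular irreducible.
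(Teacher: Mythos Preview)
Your proposal is correct and follows essentially the same route as the paper: compute stalk Euler characteristics of $\calICT$ via the small Tjurina resolution, plug them into the Kashiwara--Dubson index formula together with the known local Euler obstructions, and solve the resulting triangular system to find $r_k=1$ and $r_l=0$ otherwise. The ``technical heart'' you anticipate is in fact immediate: for $x\in\tau^\circ_{m,n,j}$ the fiber $\nu^{-1}(x)$ is exactly $G(k,j)$, so $\chi(\nu^{-1}(x))=\binom{j}{k}$, which is \emph{already} the value of $Eu_{\tau_{m,n,k}}(x)$ given by Theorem~\ref{thm; eulerobstruction}---no further combinatorial identity is needed.
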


As pointed out in \cite[Rmk 3.2.2]{Jones}, the irreducibility of the IC characteristic cycle 
is a rather unusual phenomenon. It is known to be true for Schubert varieties in a Grassmannian, for certain Schubert varieties in flag manifolds of types B, C, and D, and for theta divisors of Jacobians (cf. \cite{MR1642745}). 
By the above result, determinantal varieties also share this 
property. 

\S\ref{example} is devoted to explicit examples, leading to the formulation
of conjectures on the objects studied in this paper. Several of the examples have
been worked out by using Macaulay2. 
In \S\ref{S; dual} we highlight some patterns suggested by the examples, and show that
they follow in general from duality considerations.
The examples also suggest interesting vanishing properties on 
the low dimensional terms of the Chern-Schwartz-MacPherson classes: 
we observe that the coefficients of the $l$-dimensional pieces of 
$c_{SM}(\tau_{m,n,k}^\circ)$ vanish when $l\leq(n-k-2)$. 
Another attractive observation we make here is that all the nonzero coefficients 
appearing in the $c_M$ classes and the $c_{SM}$ classes are positive. 
In~\S\ref{conjectures} we propose precise conjectures about the effectivity property (Conjecture~\ref{conjecture; effective}) and the vanishing property 
(Conjecture~\ref{conjecture; vanishing}) for determinantal varieties in projective space. 
These facts call for a conceptual, geometric explanation. 

The situation appears to have similarities with the case of Schubert varieties.
In~\cite{MR3419959} June Huh proved that the $c_{SM}$ class of Schubert varieties 
in Grassmanians are effective, which was conjectured earlier by Paolo Aluffi and 
Leonardo Mihalcea (\cite{Aluffi4}). The $c_{SM}$ class of Schubert varieties 
in any flag manifold are also conjectured to be effective (\cite{Aluffi3}), and this
conjecture is still open.

When $K=\bbC$, the torus action by $(\bbC^*)^{m+n}$ on $\tau_{m,n,k}\subset \bbP^{mn-1}$ leads to the equivariant version of the story. In fact, some examples also indicate similar vanishing property for the low dimensional components of the equivariant Chern-Schwartz-MacPherson class. These results will be presented in a following paper.

I would like to thank Paolo Aluffi for all the help and support, and Terence Gaffney and Nivaldo G. Grulha Jr for the helpful discussion during my visit to Northeastern University. I would like to thank Corey Harris for teaching me how to use Macaulay2.

\section{Chern-Mather and Chern-Schwartz-MacPherson Class}
\label{S; c_*}
\begin{convention*}
In this paper unless specific described, all the varieties are assumed to be irreducible, and all subvarieties are closed irreducible subvarieties. 
\end{convention*}
Let $K$ be an algebraically closed field of characteristic $0$. 
Let $X$ be an algebraic variety over $K$.
A subset of $X$ is {\em constructible\/} if it can be
obtained from subvarieties of $X$ by finitely many ordinary set-theoretical operations.
A {\em constructible function} on $X$ is an integer-valued function $f$ such that there is a decomposition of $X$ as a finite union of constructible
sets, for which the restriction of $f$ to each subset in the decomposition is constant.
Equivalently, a constructible function on $X$ is a finite sum $\sum_W m_W \ind_W$ over all closed subvarieties $W$ of $X$, and the coefficients $m_W$ are integers.
Here $\ind_W$ is the indicator function that evaluates to $1$ on $W$ and to $0$ 
in the complement of $W$.

We will denote by $F(X)$ the set of constructible functions on $X$. This is an abelian group under addition of functions, and is freely generated by the indicator functions $\{\ind_W\colon W\subset X \textrm{ is a closed subvariety}\}$.

Let $f\colon X\to Y$ be a proper morphism. One defines a homomorphism $Ff\colon F(X)\to F(Y)$ by setting, for all $ p\in Y$,
\begin{equation*}
Ff(\ind_W)(p) = \chi(f^{-1}(p)\cap W)
\end{equation*} 
and extending this definition by linearity. One can verify that this makes $F$ into 
a functor from the category $\mathcal{VAR}=$
\{algebraic varieties over $K$, proper morphisms\} to the category $\mathcal{AB}$ of abelian groups.

There is another important functor from $\mathcal{VAR}$ to $\mathcal{AB}$, which is the Chow group functor $A$ with $Af\colon A_*(X)\to A_*(Y)$ being the pushforward of 
rational equivalent classes of cycles. It is natural to compare the two functors, 
and study the maps between them.  For compact complex varieties, the existence and uniqueness of the natural transformation from $F$ to $A$ that normalize to the total Chern class on smooth varieties was conjectured
by Deligne and Grothendieck, and was proved by R. D. MacPherson in 1973~\cite{MAC}. Then in 1990, G. Kennedy generalized the result to arbitrary algebraically closed field of characteristic $0$ in \cite{Kennedy}.
\begin{thm}[R. D. MacPherson, 1973]
\label{thm; macc_*}
Let $X$ be a compact complex variety
There is a unique natural transformation $c_*$ from the functor
$F$ to the functor $A$ such that if $X$ is smooth, then 
$c_*(\ind_X)=c(\mathcal{T}_X)\cap [X]$, where $\mathcal{T}_X$ 
is the tangent bundle of $X$.
\end{thm}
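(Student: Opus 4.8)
The plan is to split the statement into its two logically independent halves: the uniqueness of $c_*$, which I expect to be essentially formal once one uses that $F(X)$ is generated by indicator functions and that resolution of singularities is available in characteristic $0$, and the existence of $c_*$, which carries all of the geometric content and will be the main obstacle.

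For uniqueness, I would use that $F(X)$ is freely generated by the classes $\ind_W$ of closed subvarieties $W\subseteq X$, so it suffices to show that each value $c_*(\ind_W)$ is forced. I argue by induction on $\dim W$. Since we are in characteristic $0$, choose a resolution $\pi\colon \wt{W}\to W$ with $\wt{W}$ smooth and projective; the normalization axiom determines $c_*(\ind_{\wt{W}})=c(\calT_{\wt{W}})\cap[\wt{W}]$, and naturality forces
\[
\pi_*\bigl(c(\calT_{\wt{W}})\cap[\wt{W}]\bigr)=c_*\bigl(F\pi(\ind_{\wt{W}})\bigr).
\]
Now $F\pi(\ind_{\wt{W}})(p)=\chi(\pi^{-1}(p))$ equals $1$ on the smooth locus of $W$ (where $\pi$ is an isomorphism) and is constructible, so $F\pi(\ind_{\wt{W}})=\ind_W+\sum_j a_j\ind_{W_j}$ with each $\dim W_j<\dim W$. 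By the inductive hypothesis the terms $c_*(\ind_{W_j})$ are already determined, and since $c_*$ is additive, the displayed identity pins down $c_*(\ind_W)$ uniquely. This proves uniqueness.

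For existence, the idea is to produce $c_*$ through a second, more flexible generating set of $F(X)$. I would introduce the local Euler obstruction $Eu_W$, the integer-valued constructible function built from the Nash blow-up $\nu\colon\hat{W}\to W$ and its Nash tangent bundle $\wt{T}$, together with the Chern--Mather class $c_M(W)=\nu_*\bigl(c(\wt{T})\cap[\hat{W}]\bigr)$. Because $Eu_W$ takes the value $1$ along the smooth locus of $W$, the change of basis expressing $\{Eu_W\}$ in terms of $\{\ind_W\}$ is unitriangular for the partial order by inclusion; hence $\{Eu_W\}$ is again a free generating set of $F(X)$. I would then \emph{define}
\[
c_*(Eu_W):=(\iota_W)_*\,c_M(W)\in A_*(X),\qquad \iota_W\colon W\hookrightarrow X,
\]
and extend $\bbZ$-linearly. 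The normalization axiom then holds automatically, since $Eu_X=\ind_X$ and $c_M(X)=c(\calT_X)\cap[X]$ when $X$ is smooth.

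The hard part will be verifying that this $c_*$ is a natural transformation, i.e. that $c_*\circ Ff=Af\circ c_*$ for every proper $f$. Using Chow's lemma and resolution of singularities I would reduce to projective $f$, and factor such an $f$ as a closed immersion followed by a projection $X\times\bbP^N\to X$. Naturality for closed immersions is essentially formal, since $Eu_W$ and $c_M(W)$ are intrinsic to $W$ and $c_*$ is defined by pushing forward along inclusions. The substantive case is the projection, where the identity $c_*(Fp(Eu_W))=p_*\,c_*(Eu_W)$ must be established directly. This is the point at which MacPherson's graph construction enters: over the Nash blow-up one forms a one-parameter degeneration of a suitable bundle homomorphism, takes the closure of its graph in a projective bundle, and compares the two limiting cycles at the ends of the parameter line. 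One limit records the pushforward of the Chern--Mather class; the other records the Chern--Mather and Euler-obstruction data of the image, with the fiberwise Euler characteristics appearing as multiplicities. Matching these multiplicities against the values of $Fp(Eu_W)$ — and thereby exhibiting the two cycles as rationally equivalent — is the technical heart of the argument and the step I expect to be most delicate.
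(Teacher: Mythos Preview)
The paper does not give its own proof of this theorem: it is quoted as MacPherson's 1973 result, and the paper only offers a three-line summary of the argument (Euler obstructions are constructible; the $Eu_W$ form a basis of $F(X)$; setting $c_*(Eu_W)=i_*c_M(W)$ gives the unique natural transformation satisfying the normalization). There is nothing further to compare against.

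Your outline is faithful to MacPherson's original strategy and to the paper's summary on the existence side: you introduce $Eu_W$ and $c_M(W)$, observe that $\{Eu_W\}$ is a unitriangular change of basis from $\{\ind_W\}$, define $c_*$ on this basis, and correctly identify the graph construction over the Nash blow-up as the substantive step in verifying naturality for projections. One small difference worth noting: for uniqueness you argue directly via resolution of singularities and induction on $\dim W$, whereas the paper's summary (and MacPherson's paper) extracts uniqueness from the same Euler-obstruction basis used for existence. Both are standard and correct; your route has the mild advantage of being logically independent of the Nash-blow-up machinery, at the cost of invoking Hironaka.
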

\begin{thm}[G. Kennedy, 1990]
The above theorem can be generalized to arbitrary algebraically closed field of characteristic $0$.
\end{thm}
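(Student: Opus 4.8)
The plan is to reduce the entire statement to the framework of conic Lagrangian cycles in a cotangent bundle, in which every construction entering MacPherson's transformation is algebraic and therefore available over an arbitrary algebraically closed field $K$ of characteristic $0$. Throughout, the one genuinely analytic input in MacPherson's original argument---the topological definition of the local Euler obstruction and the homotopy (``graph construction'') proof of functoriality---is to be replaced by intersection-theoretic statements about cycles in $T^*M$. Uniqueness is the formal part. Given any variety $X$ over $K$, Hironaka's resolution of singularities provides a proper birational morphism $\pi\colon \wt X\to X$ with $\wt X$ smooth. Since $\pi$ is an isomorphism over the smooth locus $X_{\mathrm{reg}}$, the pushforward $F\pi(\ind_{\wt X})$ equals $\ind_X$ plus a constructible function supported on the lower-dimensional set $X\smallsetminus X_{\mathrm{reg}}$. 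Any natural transformation satisfying the normalization axiom must send $\ind_{\wt X}$ to $c(\mathcal{T}_{\wt X})\cap[\wt X]$, so naturality forces $\pi_*\bigl(c(\mathcal{T}_{\wt X})\cap[\wt X]\bigr)=c_*(\ind_X)+c_*(\text{lower order terms})$; inducting on $\dim X$ then pins down $c_*(\ind_X)$ uniquely, and since the $\ind_W$ freely generate $F(X)$ this determines $c_*$ everywhere. This argument uses only resolution of singularities and is insensitive to the base field.

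For existence I would build $c_*$ algebraically. First define the Nash blowup $\hat X\to X$ and its tautological bundle purely scheme-theoretically; the Chern-Mather class $c_M(V)$ of a subvariety is then the pushforward of the total Chern class of this bundle, an algebraic class in $A_*(V)$. Next, for $X\subset M$ with $M$ smooth, let $L(X)$ denote the group of conic Lagrangian cycles in $T^*M$ supported over $X$; it is freely generated by the conormal varieties $[T^*_V M]$ of subvarieties $V\subset X$. I would define the algebraic local Euler obstruction $Eu_V$ by the Gonz\'alez-Sprinberg--Verdier intersection formula---the multiplicity of $[T^*_V M]$ along a generic cotangent fiber---which makes sense over any field and recovers the topological definition when $K=\bbC$. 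Because $Eu_V$ restricts to $1$ on the smooth locus of $V$ and is supported on $V$, the matrix expressing $\{Eu_V\}$ in terms of $\{\ind_V\}$ is triangular with unit diagonal for the dimension filtration, hence invertible over $\bbZ$; this yields a canonical isomorphism $E\colon F(X)\xrightarrow{\sim} L(X)$ with $Eu_V\mapsto (-1)^{\dim V}[T^*_V M]$ up to the chosen sign. Defining $c\colon L(X)\to A_*(X)$ on generators by $c([T^*_V M])=(-1)^{\dim V}c_M(V)$ and setting $c_*:=c\circ E$ produces a candidate transformation for which $c_*(Eu_V)=c_M(V)$. The normalization axiom $c_*(\ind_X)=c(\mathcal{T}_X)\cap[X]$ for smooth $X$ is then a direct check, since there $X$ is its own Nash blowup and $c_M(X)=c(\mathcal{T}_X)\cap[X]$.

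The main obstacle is functoriality: proving that $c_*$ commutes with proper pushforward $f\colon X\to Y$ without recourse to topology. I would establish this by showing that each factor of $c_*=c\circ E$ is itself functorial. Compatibility of the Euler-obstruction isomorphism $E$ with pushforward amounts to the statement that the Lagrangian cycle attached to $Ff(\alpha)$ equals the proper pushforward in the cotangent bundle of the Lagrangian cycle attached to $\alpha$; over $\bbC$ this is the Dubson--Kashiwara index theorem, and the task is to reprove it algebraically as an identity of cycles modulo rational equivalence, following Sabbah's calculus of Lagrangian cycles. Compatibility of the Chern map $c$ with pushforward is a projection-type formula for Chern-Mather classes of conormal cycles; here MacPherson's analytic graph construction is replaced by a deformation-to-the-normal-cone argument carried out inside the cotangent bundle, reducing the claim to the functoriality of ordinary Segre and Chern classes under proper maps. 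Assembling the two functorialities gives naturality of $c_*$, which together with the normalization already verified completes the existence half and hence the theorem. The bulk of the genuine work---and where the characteristic-$0$ rather than topological hypotheses are actually spent---lies in the algebraic proof of the Lagrangian index identity.
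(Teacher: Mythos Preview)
The paper does not give its own proof of this theorem; it is stated as a cited result of Kennedy and accompanied only by a three-line summary of Kennedy's strategy (define the functor $L$ of conic Lagrangian cycles, identify $F\cong L$ via $Eu_V\mapsto$ conormal cycle, and build $c_*$ as a natural transformation $L\to A$). Your sketch follows exactly this architecture and simply fleshes out each step---the Gonz\'alez-Sprinberg--Verdier formula for $Eu_V$, the triangular change-of-basis argument for $F\cong L$, and the Sabbah-style Lagrangian calculus replacing MacPherson's graph construction---so it is consistent with, and more detailed than, what the paper records. The one addition you make that the paper's summary omits is the uniqueness argument via resolution of singularities; this is standard and correct, and the paper simply does not discuss uniqueness separately.
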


For complex varieties, in \cite{MAC} MacPherson defined two important concepts as the main ingredients in his definition of $c_*$: the local Euler obstruction function and the Chern-Mather class. Let $XM$ be a compact complex variety. For any subvariety $V\subset X$, the local Euler obstruction function $Eu_V$ is defined as the local obstruction to extend certain $1$-form around each point. This is an integer-valued function on $X$, and measures $0$ outside $V$. 
He also defined the Chern-Mather class of $V$ as a cycle in $A_*(V)$ denoted by $c_M(V)$. It is  the pushforward of the Chern class of the Nash tangent bundle from the Nash Blowup of $V$ (See \cite{MAC} for precise definitions). 

The proof of the theorem can be summarized as the following steps. 
\begin{enumerate}
\item For any subvariety $W\subset X$, $Eu_W$ is a constructible function, i.e., $Eu_W=\sum_{V} e_Z \ind_Z$ for some sub-varieties $Z$ of $X$.
\item $\{Eu_W| W \text{ is a subvariety of } X\}$ form a basis for $F(X)$.
\item Let $i\colon W\to X$ be the closed embedding. Define $c_*(Eu_W)=i_*(c_M(W))$ to be the pushforward of the Chern-Mather class of $W$ in $A_*(X)$. This is the unique natural transformation that matches the desired normalization property. 
\end{enumerate}

\begin{remark}
\begin{enumerate}
\item
Another definition of Chern classes on singular varieties is due to M.-H. Schwartz, who uses obstruction theory and radial frames to construct such classes. Details of the construction can be found in~\cite{MR32:1727}~\cite{MR35:3707}~\cite{MR629125}. Also in~\cite{MR629125} it is shown that these classes correspond, by Alexander isomorphism, to the classes defined by MacPherson in the above theorem. 
\item  MacPherson's original work was on homology groups, but one can change settings and get a Chow group version of the theorem. Cf.~\cite[19.1.7]{INT}. 
\item The original definitions of the Local Euler obstruction and Chern-Mather class made by MacPherson were for complex varieties, but one can extend them to arbitrary algebraically closed base field $K$ (Cf. \cite{MR629121}). 
\item
Assuming that $X$ is compact, if we consider the constant map $k\colon X\to \{p\}$, then the covariance property of $c_*$
shows that
\begin{align*}
\int_X c_{SM}(Y)
=&~ \int_{\{p\}} Afc_*(\ind_Y)=\int_{\{p\}} c_*Ff(\ind_Y)\\
=&~ \int_{\{p\}} \chi(Y)c_*(\ind_{\{p\}})=\chi(Y). \qedhere
\end{align*}
This observation gives a generalization of the classical
Poincar\'e-Hopf Theorem to possibly singular varieties.
\end{enumerate}
\end{remark} 

For algebraic varieties over algebraically closed field $K$, the important ingredients used in Kennedy's proof are the conormal and Lagrangian cycles. We will give more details in \S\ref{S; projconormal}. The generalization in \cite{Kennedy} 
can be summarized as the following steps.  Let $X$ be an algebraic variety over $K$, and assume that $X\subset M$ is a closed embedding into some compact smooth ambient space.
\begin{enumerate}
\item Let $L(X)$ be the group of Lagrangian cycles of $X$, i.e., the free abelian group on the set of conical Lagrangian subvarieties in $T^*M|_{X}$.
 This is a functor from the category of algebraic varieties with proper morphisms to abelian groups.
\item The functor $F$ of constructible functions is isomorphic to the functor $L$ by taking $Eu_V$ to the conormal cycle of $V$.
\item There is a unique natural transform $c_*$ from $L$ to the Chow group functor $A$ that is compatible with the natural transform defined by MacPherson's over $\bbC$, under the identification of $L$ and $F$. 
\end{enumerate}

\begin{defn}
Let $X$ be an ambient space over $K$. Let $Y\subset X$ be a locally closed subset of $X$. 
Hence $\ind_Y$ is a constructible function in $F(X)$, and the class $c_*(\ind_Y)$ in $A_*(X)$ is called the Chern-Schwartz-MacPherson class of~$Y$, denoted by $c_{SM}(Y)$. We will let $c_M(Y)$ be the Chern-Mather class of $Y$, and we will usually implicitly view this class as an element
of $A_*(X)$.
\end{defn}

In this paper, we consider $Y=\tau_{m,n,k}$ to be our studying object, and pick $X=\bbP^{mn-1}$ to be the ambient space. Note that the Chow group of $\bbP^{mn-1}$ is the free $\bbZ$-module generated by $\{1,H,H^2,\cdots ,H^{mn-1}\}$, where $H=c_1(\calO(1))$ is the hyperplane class. It also forms an intersection ring under intersection product, which is $\bbZ[H]/(H^{mn})$. Hence by definition $c_M(\tau_{m,n,k})$ and $c_{SM}(\tau_{m,n,k})$ can be expressed as polynomials of degree $\le mn$ in $H$.

\section{Determinantal Varieties and the Tjurina Transform}
\label{S; determinantalvar}
\subsection{Determinantal Variety}
Let $K$ be an algebraically closed field of characteristic $0$. For $m\geq n$, let $M_{m,n}=M_{m,n}$ be the set of $m \times n$ nonzero
matrices over $K$ up to scalar. We view this set as a projective space
 $\bbP^{mn-1}=\bbP(Hom(V_n,V_m))$ for some $n$-dimensional vector space $V_n$ and $m$-dimensional vector space $V_m$ over $K$. 
For $0\leq k\leq n-1$, we consider the subset $\tau_{m,n,k} \subset M_{m,n}$ consisting of all the matrices whose kernel has dimension no less than $k$, or equivalently with rank no bigger than $n-k$. Since the rank condition is equivalent to the vanishing of all $(n-k+1)\times(n-k+1)$ minors, $\tau_{m,n,k}$ is a subvariety of $\bbP^{mn-1}$. The varieties $\tau_{m,n,k}$ are called \textit{(generic) Determinantal Varieties}.

The determinantal varieties have the following basic properties:
\begin{enumerate}
\item When $k=0$, $\tau_{m,n,0}=\bbP^{mn-1}$ is the whole porjective space.
\item When $k=n-1$, $\tau_{m,n,n-1}\cong \bbP^{m-1}\times \bbP^{n-1}$ is isomorphic to the Segre embedding.
\item $\tau_{m,n,k}$ is irreducible, and $\dim\tau_{m,n,k}=(m+k)(n-k)-1$.
\item For $i\leq j$, we have the natural closed embedding $\tau_{m,n,j}\hookrightarrow \tau_{m,n,i}$. In particular, for $j=i+1$, we denote the open subset $\tau_{m,n,i}\smallsetminus \tau_{m,n,i+1}$ by $\tau_{m,n,i}^\circ$.
\item For $n-1>k\geq 1$, and $n\geq 3$, the varieties $\tau_{m,n,k}$ are singular with singular locus $\tau_{m,n,k+1}$. Hence $\tau_{m,n,k}^\circ$ is the smooth part of $\tau_{m,n,k}$. We make the convention here that $\tau_{m,n,n-1}^\circ=\tau_{m,n,n-1}$.
\item For $i=0,1,\cdots ,n-1-k$, the subsets $\tau_{m,n,k+i}^\circ$ form a disjoint decomposition of $\tau_{m,n,k}$. When $k=\bbC$, this is a Whitney stratification.
\end{enumerate}

\subsection{The Tjurina Transform}
\label{Tjurina}
In this section we introduce a resolution $\hat \tau_{m,n,k}$ of $\tau_{m,n,k}$, which will be used later in the computation of the Chern-Mather class. The resolution was used by Tjurina in \cite{Tju}, and we call it the Tjurina transform. It is defined as the incidence correspondence in $G(k,n)\times\bbP^{mn-1}$:
\begin{align*}
\hat \tau_{m,n,k}:=\{(\Lambda,\varphi)|\varphi\in \tau_{m,n,k};  \Lambda\subset\ker\varphi\}.
\end{align*}
And one has the following diagram:
\begin{small}
\[
\begin{tikzcd}
 & \hat\tau_{m,n,k} \arrow{r}{} \arrow{d}{\nu} \arrow[swap]{dl}{\rho} & G(k,n)\times \bbP^{mn-1} \arrow{d} \\
G(k,n)  & \tau_{m,n,k} \arrow{r}{i} & \bbP^{mn-1}.
\end{tikzcd}
\]
\end{small}

\begin{prop} 
The map $\nu\colon \hat\tau_{m,n,k}\to \tau_{m,n,k}$ is birational. Moreover, $\hat\tau_{m,n,k}$ is isomorphic to the projective bundle $\bbP(Q^{\vee m})$ over $G(k,n)$, where $Q$ denotes the 
universal quotient bundle of the
Grassmanian. In particular $\hat\tau_{m,n,k}$ is smooth, therefore it is a resolution of singularity of $\tau_{m,n,k}$.
\end{prop}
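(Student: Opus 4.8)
The plan is to establish the projective bundle description first, since smoothness then follows for free and the birationality statement becomes transparent. Write $0 \to S \to V_n \otimes \calO \to Q \to 0$ for the tautological sequence on $G(k,n)$, so that the fibre of $Q$ over a point $\Lambda$ is the quotient $V_n/\Lambda$. The key observation is that a homomorphism $\varphi \colon V_n \to V_m$ satisfies $\Lambda \subset \ker\varphi$ precisely when it factors through $V_n/\Lambda$; hence the fibre of the projection $\rho \colon \hat\tau_{m,n,k} \to G(k,n)$ over $\Lambda$ is the projectivization of $\operatorname{Hom}(V_n/\Lambda, V_m)$. This suggests that $\hat\tau_{m,n,k}$ should be $\bbP(\calE)$ for the bundle $\calE = \calH om(Q, V_m\otimes\calO) \cong Q^\vee \otimes V_m \cong (Q^\vee)^{\oplus m}$, whose fibre at $\Lambda$ is exactly $\operatorname{Hom}(V_n/\Lambda, V_m)$.

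To turn this fibrewise statement into an isomorphism of varieties over $G(k,n)$, I would produce the identification globally rather than fibre by fibre. Applying $\calH om(-, V_m\otimes\calO)$ to the surjection $V_n\otimes\calO \twoheadrightarrow Q$ yields a morphism of bundles $\calE = \calH om(Q, V_m\otimes\calO) \to \calH om(V_n\otimes\calO, V_m\otimes\calO) = \operatorname{Hom}(V_n,V_m)\otimes\calO$ given by precomposition; because the tautological sequence is locally split, this map is fibrewise injective with locally free cokernel $\calH om(S, V_m\otimes\calO)$, so it is a subbundle inclusion. Projectivizing it gives a closed embedding over $G(k,n)$,
\[
\bbP(\calE) \hookrightarrow \bbP(\operatorname{Hom}(V_n,V_m)\otimes\calO) = G(k,n)\times\bbP^{mn-1},
\]
and by construction a point of $\bbP(\calE)$ over $\Lambda$ is sent to a pair $(\Lambda, [\varphi])$ with $\varphi$ annihilating $\Lambda$, i.e. $\Lambda\subset\ker\varphi$. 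Thus the image is exactly $\hat\tau_{m,n,k}$, giving $\hat\tau_{m,n,k}\cong\bbP((Q^\vee)^{\oplus m})$. Since $G(k,n)$ is smooth and a projective bundle over a smooth base is smooth, this yields the smoothness claim immediately.

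For birationality I would restrict $\nu$ over the top stratum $\tau^\circ_{m,n,k}$ of matrices with $\dim\ker\varphi = k$ exactly, which is dense in $\tau_{m,n,k}$ since $\tau_{m,n,k+1}$ is a proper closed subvariety of strictly smaller dimension. Over this locus the only $k$-plane contained in $\ker\varphi$ is $\ker\varphi$ itself, so $\nu$ is bijective there; moreover $\varphi \mapsto (\ker\varphi, \varphi)$ is a morphism, because the kernel of a family of matrices of constant rank varies algebraically (it is cut out by the vanishing of the relevant minors and recovered by Cramer-type formulas), and it is inverse to $\nu$. Hence $\nu$ restricts to an isomorphism over a dense open set and is birational.

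The main obstacle is the second paragraph: the content is not the fibrewise computation but the verification that the identification is algebraic and global, i.e. that precomposition defines a genuine subbundle inclusion over all of $G(k,n)$, so that projectivizing produces the asserted closed embedding. Once this subbundle structure is in place, both the projective bundle description and the smoothness are formal, and the birationality reduces to the elementary uniqueness of the kernel on the top stratum.
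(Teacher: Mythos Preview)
Your proposal is correct and follows essentially the same approach as the paper: both identify the fibre of $\rho$ over $\Lambda$ with $\bbP(\operatorname{Hom}(V_n/\Lambda,V_m))$ to obtain the projective bundle description, and both restrict $\nu$ to the top stratum $\tau_{m,n,k}^\circ$, where fibres are singletons, to get birationality. Your treatment is in fact more careful than the paper's, which simply asserts the fibrewise identification globalizes and deduces birationality from a dimension count rather than exhibiting an inverse morphism.
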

\begin{proof}
Let $Q$ be the universal quotient bundle on $G(k,n)$. For any $k$-plane $\Lambda\in G(k,n)$, the fiber of $\rho$ is $\rho^{-1}(\Lambda)=
\{\varphi\in \bbP^{mn-1} |\Lambda\subset ker\varphi\}$. Consider the space of linear morphisms $Hom(V_n/\Lambda,V_m)=(Q^{\vee m})|_{\Lambda}$. The fiber over $\Lambda$ is isomorphic to the projectivization of $(Q^{\vee m}))|_{\Lambda}$ by factoring through a quotient. 
\begin{align*}
\varphi\colon V_n\to V_m \mapsto \bar{\varphi}\colon V_n/\Lambda\to V_m
\end{align*}
This identifies $\hat\tau_{m,n,k}$ to the projective bundle $\bbP(Q^{\vee m})$, whose rank is $m(n-k)-1$. Hence $\dim \hat\tau_{m,n,k}=m(n-k)-1+k(n-k)=(m+k)(n-k)-1$.

To show that $\nu$ is birational, we consider the open subset $\tau_{m,n,k}^{\circ}$. 
Note that $\nu^{-1}(\tau_{m,n,k+1})$ is cut out from $\hat\tau_{m,n,k}$ by the $(k+1)\times(k+1)$ 
minors, so its complement $\nu^{-1}(\tau_{m,n,k}^{\circ})$ is indeed open. 
For every $\varphi\in\tau_{m,n,k}^{\circ}$, one has $\dim\ker\varphi=k$, hence the fiber $\nu^{-1}(\varphi)=\ker\varphi$ contains exactly one point. 
This shows that $\dim\hat\tau_{m,n,k} = 
\dim\tau_{m,n,k}=(m+k)(n-k)-1$, which implies that $\nu$ is birational.
\end{proof}
 
Since $\hat\tau_{m,n,k}$ is the projective bundle $\bbP(Q^{\vee m})$, we have the 
following Euler sequence:
\[
\begin{tikzcd}
0 \arrow{r} & \calO_{\hat\tau_{m,n,k}} \arrow{r} & \rho^*(Q^{\vee m})\otimes \calO_{\hat\tau_{m,n,k}}(1)\arrow{r} & \mathcal{T}_{\hat\tau_{m,n,k}}\arrow{r} & \rho^*\mathcal{T}_{G(k,n)} \arrow{r} & 0
\end{tikzcd}
\numberthis \label{eulersequence}
\]

\begin{remark}
Since $\hat\tau_{m,n,k}$ is smooth, then its $c_{SM}$ class is just the ordinary total chern class $c(\calT_{\hat \tau_{m,n,k}})\cap [\hat \tau_{m,n,k}]$. Moreover, it is the projective bundle $\bbP(Q^{\vee m})$ over the Grassmanian. As we will see in the following sections,
this reduces the computation of $c_{SM}(\tau_{m,n,k})$ to a computation in the Chow ring of a Grassmannian. 
\end{remark} 

Moreover, the Tjurina transform $\hat\tau_{m,n,k}$ is a small resolution of $\tau_{m,n,k}$. First let's recall the definition of a small resolution:
\begin{defn}
Let $X$ be a irreducible algebraic variety. Let $p\colon Y\to X$ be a resolution of singularities. $Y$ is called a \textit{Small Resolution} of $X$ if for all $i>0$, 
one has $\codim_X \{x\in X| \dim p^{-1}(x)\geq i\} > 2i$.
\end{defn}
 
\begin{prop}
\label{prop; small}
The Tjurina transform $\hat \tau_{m,n,k}$ is a small resolution of $\tau_{m,n,k}$.
\end{prop}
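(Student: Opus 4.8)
The plan is to analyze the fibers of $\nu\colon\hat\tau_{m,n,k}\to\tau_{m,n,k}$ stratum by stratum and then to read off the codimension estimate from the dimension formula $\dim\tau_{m,n,l}=(m+l)(n-l)-1$. We may assume $k\geq 1$, since for $k=0$ the map $\nu$ is an isomorphism and smallness is vacuous. First I would determine the fibers of $\nu$ over each stratum $\tau_{m,n,k+j}^\circ$. If $\varphi\in\tau_{m,n,k+j}^\circ$, then $\dim\ker\varphi=k+j$, and by the very definition of the incidence correspondence,
\[
\nu^{-1}(\varphi)=\{\Lambda\in G(k,n)\mid \Lambda\subset\ker\varphi\}
\]
is precisely the Grassmannian $G(k,k+j)$ of $k$-planes inside the $(k+j)$-dimensional space $\ker\varphi$. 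Hence the fiber over a point of $\tau_{m,n,k+j}^\circ$ has dimension $k\bigl((k+j)-k\bigr)=kj$. In particular the fiber dimension is $0$ over the open stratum $\tau_{m,n,k}^\circ$, recovering the birationality already proved, and it strictly increases with $j$.

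Next I would identify the locus where the fiber dimension is large. Since $kj$ is increasing in $j$ and the closed stratum $\tau_{m,n,k+j_0}$ is the union of all strata $\tau_{m,n,k+j}^\circ$ with $j\geq j_0$, for each $i>0$ we obtain
\[
\{\varphi\in\tau_{m,n,k}\mid \dim\nu^{-1}(\varphi)\geq i\}=\tau_{m,n,k+j_0},\qquad j_0=\lceil i/k\rceil,
\]
the smallest index for which $kj_0\geq i$. Using the dimension formula, a direct expansion gives
\[
\codim_{\tau_{m,n,k}}\tau_{m,n,k+j_0}=(m+k)(n-k)-(m+k+j_0)(n-k-j_0)=j_0\,(m-n+2k+j_0).
\]

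Finally I would verify the smallness inequality $\codim>2i$. For a fixed $j_0$ the relevant values satisfy $i\leq kj_0$, so it suffices to check the inequality at the worst case $i=kj_0$; there the required estimate $j_0(m-n+2k+j_0)>2kj_0$ simplifies, after dividing by $j_0>0$, to $m-n+j_0>0$. Since $m\geq n$ and $j_0\geq 1$, this holds (indeed $m-n+j_0\geq 1$), and the inequality is strict, which is exactly what the definition of a small resolution requires. I do not anticipate a serious obstacle: the entire content lies in correctly identifying the fibers as sub-Grassmannians and in the codimension bookkeeping. The only point deserving a little care is confirming that the worst case is $i=kj_0$ and that strictness survives there, which is precisely where the standing hypothesis $m\geq n$ enters.
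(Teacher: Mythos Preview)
Your argument is correct and follows essentially the same strategy as the paper: identify each fiber $\nu^{-1}(\varphi)$ over $\tau_{m,n,k+j}^\circ$ as the Grassmannian $G(k,k+j)$ of dimension $kj$, recognize the locus $\{\dim\nu^{-1}\geq i\}$ as a deeper determinantal stratum, and then verify the codimension inequality from the dimension formula. Your use of $j_0=\lceil i/k\rceil$ is in fact the accurate choice (the paper's printed proof writes $\lfloor i/k\rfloor$, which is a slip), and your reduction to the worst case $i=kj_0$ with the resulting inequality $m-n+j_0>0$ is clean and correct.
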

\begin{proof}
Define $L_i=\{x\in \tau_{m,n,k}| \dim \nu^{-1}(x)\geq i \}$. We just need to show that $\codim_{\tau_{m,n,k}} L_i >2i$. Notice that for any $p\in \tau_{m,n,j}^\circ\subset \tau_{m,n,k}$, we have
$\nu^{-1}(p)=\{(p,\Lambda)|\Lambda\subset \ker p\}\cong G(k,j)$.
Hence $\dim \nu^{-1}(p)=k(k-j)$ for any $p\in \tau_{m,n,j}^\circ$, and $L_i=\tau_{m,n,s}$, where $s=k+\lfloor\frac{i}{k}\rfloor$.
So we have
\begin{align*}
\codim_{\tau_{m,n,k}} L_i
=& \codim_{\tau_{m,n,k}} \tau_{m,n,s} \\
=& (m+k)(n-k)-(m+s)(n-s) \\
=& (m+k)(n-k)-(m+k+\lfloor\frac{i}{k}\rfloor)(n-k+\lfloor\frac{i}{k}\rfloor) \\
=&  \lfloor\frac{i}{k}\rfloor((m+k)-(n-k)+\lfloor\frac{i}{k}\rfloor) \\
=&  \lfloor\frac{i}{k}\rfloor(m-n+2k+\lfloor\frac{i}{k}\rfloor) 
> 2i \/. \qedhere
\end{align*}
\end{proof}
\begin{remark}
The resolution $\hat\tau_{m,n,k}$ agrees with the desingularization $Z_r$ defined in \cite[p.804, Diagram 2.1]{PP1} when $K=\bbC$.  Parusi{\'n}ski and Pragacz observed that this is a small desingularization in the proof of \cite[Theorem 2.12]{PP1}, where they compute the Intersection Homology Euler characteristic of degeneracy loci.
\end{remark}

\section{The Main Algorithm}
\label{cmather}
\subsection{Chern-Mather Class via Tjurina Transform}
\label{S; CMSR}
The Chern-Mather class of a variety is defined in terms of its Nash blow-up
and Nash tangent bundle. However, the recent result on the Local Euler obstruction of determinantal varieties suggests that we can directly use the Tjurina transform $\hat\tau_{m,n,k}$ to compute $c_M(\tau_{m,n,k})$.

\begin{thm}[Theorem 5 \cite{XP}]
\label{thm; eulerobstruction}
Let $K$ be an algebraically closed field.
Let $\tau_{m,n,k}$ be the determinantal variety over $K$. For any $\varphi\in\tau_{m,n,k+i}\smallsetminus\tau_{m,n,k+i+1}$, the local Euler obstruction of $\tau_{m,n,k}$ at $\varphi$ equals
\[
Eu_{\tau_{m,n,k}}(\varphi)=\binom{k+i}{i}.
\]
\end{thm}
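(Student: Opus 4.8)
The plan is to compute the local Euler obstruction by localizing it to the vertex of an affine cone and then reading it off from the fiber geometry of the Tjurina transform. Throughout I use that $GL_m \times GL_n$ acts on the space of matrices by $\varphi \mapsto g\varphi h^{-1}$, preserving rank; the induced action on $\bbP^{mn-1}$ preserves each $\tau_{m,n,k}$, and its orbits are exactly the strata $\tau_{m,n,j}^\circ$. Since the local Euler obstruction is a constructible function and is constant along the orbits of a group acting by automorphisms of the pair $(\bbP^{mn-1},\tau_{m,n,k})$, the value $Eu_{\tau_{m,n,k}}(\varphi)$ depends only on the corank $k+i$ of $\varphi$. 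This already explains why the answer should involve only $k$ and $i$.

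Next I would pass to a transverse slice, using the standard local normal form of determinantal varieties. By homogeneity I may take $\varphi_0$ to be the standard corank-$(k+i)$ matrix $I_\rho \oplus 0$ with $\rho = n-k-i$. On the Zariski chart where the upper-left $\rho\times\rho$ block $A$ is invertible, the substitution $\psi = D - CA^{-1}B$ identifies a neighbourhood of $\varphi_0$ with a product of an affine space (in the entries of $A,B,C$) and the space of $(m-\rho)\times(n-\rho)$ matrices $\psi$, under which the condition $\rk\varphi \le n-k$ becomes $\rk\psi \le i$. Hence, locally, $\tau_{m,n,k}$ is the product of a smooth factor with the affine determinantal cone $D$ of $(m-n+k+i)\times(k+i)$ matrices of corank $\ge k$, with $\varphi_0$ sitting at the vertex $\psi=0$. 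Because the local Euler obstruction is invariant under a product with a smooth factor, $Eu_{\tau_{m,n,k}}(\varphi_0) = Eu_D(0)$; equivalently, writing $n'=k+i$, it suffices to prove $Eu_D(0)=\binom{n'}{k}$ for this affine cone. This reduction is purely algebraic and so is valid over any $K$.

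To evaluate the vertex contribution I would use the Tjurina transform $\hat D \to D$. By the same fiber computation as in the proof of Proposition~\ref{prop; small}, the fiber $\nu^{-1}(0)$ over the vertex is the full Grassmannian $G(k,n')$ of $k$-planes in $\ker(0)=K^{n'}$. I would then feed the explicit geometry of $\hat D$ — its structure as a projective bundle over $G(k,n')$ and the description of its tangent bundle furnished by the Euler sequence~(\ref{eulersequence}) — into the intersection-theoretic (Gonzalez-Sprinberg--Lê-Teissier) description of the local Euler obstruction, which expresses $Eu_D(0)$ as the degree of $c(\tilde T)\cap s(\nu^{-1}(0),\hat D)$. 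When the normal bundle of the smooth fiber $G(k,n')$ in the resolution splits off, this integral collapses to $\int_{G(k,n')} c(T_{G(k,n')}) = \chi(G(k,n'))$, and $\chi(G(k,n')) = \binom{n'}{k} = \binom{k+i}{i}$ by the cell decomposition of the Grassmannian.

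The crux, and the step I expect to be the main obstacle, is that the Tjurina transform is a resolution but not a priori the Nash blowup, so the Nash bundle $\tilde T$ in the Gonzalez-Sprinberg formula must be identified with the restriction of the tangent bundle of $\hat D$ along the exceptional fiber. To control this I would use the homogeneity of $D$ under $GL_{m-n+k+i}\times GL_{k+i}$ together with the smallness of $\nu$ established in Proposition~\ref{prop; small}, which constrain the limiting tangent spaces along $G(k,n')$ and should let one express the Nash data in terms of the universal sub- and quotient bundles via~(\ref{eulersequence}). Once this identification is secured, the only remaining task is the Grassmannian integral, whose value $\chi(G(k,n'))=\binom{n'}{k}$ completes the proof; over a general field $K$ one replaces the topological Euler characteristic by its intersection-theoretic counterpart, which is precisely the setting in which the Gonzalez-Sprinberg formula is available.
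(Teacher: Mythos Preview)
This theorem is not proved in the present paper; it is quoted from the companion paper \cite{XP} (and, over $\bbC$, from Gaffney--Grulha--Ruas \cite{NG-TG}) and then used as input to Lemma~\ref{lemma; cmeu}. So there is no in-paper proof to compare your attempt against.

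On its own merits, your reduction via the $GL_m\times GL_n$ action and the transverse slice to the vertex of the affine determinantal cone is correct and standard. The gap is exactly where you flag it. The Gonz\'alez-Sprinberg--Verdier formula requires the Nash blowup and the Nash bundle $\tilde T$; replacing these by the Tjurina transform $\hat D$ and $T_{\hat D}|_{\nu^{-1}(0)}$ is precisely what makes the integral collapse to $\chi(G(k,n'))$, but that substitution \emph{is} the content of the theorem, not a formality. Neither smallness nor homogeneity closes it: smallness (Proposition~\ref{prop; small}) bounds fiber dimensions and says nothing about limits of tangent spaces, and the $GL\times GL$ action is transitive on the central fiber $G(k,n')$ but does not by itself determine the Nash bundle along it. What one actually needs---and what \cite{XP} and \cite{NG-TG} supply---is a direct comparison of the Nash blowup of the determinantal cone with the Tjurina transform (showing that the natural rational map $\hat D\dashrightarrow \mathrm{Nash}(D)$ extends and identifying the pullback of the Nash bundle in terms of the universal bundles), or an independent inductive computation of $Eu$ across the strata. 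Your final paragraph names this step but does not carry it out, so the argument as written is incomplete. Note also that the paper's logic runs in the opposite direction: it \emph{uses} Theorem~\ref{thm; eulerobstruction} to deduce $\nu_*(\ind_{\hat\tau_{m,n,k}})=Eu_{\tau_{m,n,k}}$, so nothing internal to this paper can be invoked to justify the identification you need.
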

\begin{remark}
Over $\bbC$, this formula for the local Euler obstruction was obtained
earlier by Gaffney, Grulha, and Ruas in \cite{NG-TG}.
They worked with the affine determinantal variety $\Sigma_{m,n,k}$ , that is, the affine cone over $\tau_{m,n,k}$. It is easy to see that this does not affect the local Euler obstruction, since $\Sigma_{m,n,k}$ locally is the product of $\tau_{m,n,k}$ with $\bbC^*$.
\end{remark}

Use the above result, one can prove the following Lemma.
\begin{lemma}
\label{lemma; cmeu}
Let $\nu \colon \hat\tau_{m,n,k}\to \tau_{m,n,k}$ be the Tjurina transform of $\tau_{m,n,k}$. Then the Chern-Mather class of $\tau_{m,n,k}$ equals
\begin{align*}
c_M(\tau_{m,n,k})
&=\nu_*(c_{SM}(\hat\tau_{m,n,k})) \\
&=\nu_*(c(T_{\hat\tau_{m,n,k}})\cap [\hat\tau_{m,n,k}]) \/.
\end{align*}
\end{lemma}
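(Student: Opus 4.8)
The plan is to exploit the key fact, established earlier in the excerpt, that $\hat\tau_{m,n,k}$ is a \emph{small} resolution (Proposition~\ref{prop; small}), together with the functoriality of the Chern--Schwartz--MacPherson transformation and the explicit value of the local Euler obstruction (Theorem~\ref{thm; eulerobstruction}). The central idea is that for a small resolution the pushforward of the constant function $\ind_{\hat\tau_{m,n,k}}$ need not equal $\ind_{\tau_{m,n,k}}$, but instead records the Euler characteristics of the fibers, which are exactly the local Euler obstructions. I will show that $\nu_*(\ind_{\hat\tau_{m,n,k}})$ as a constructible function coincides with $Eu_{\tau_{m,n,k}}$, and then apply $c_*$ to both sides.

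First I would compute $F\nu(\ind_{\hat\tau_{m,n,k}})$ directly from its definition. Since $\nu$ is proper, we have for any $\varphi\in\tau_{m,n,k}$ that
\begin{equation*}
F\nu(\ind_{\hat\tau_{m,n,k}})(\varphi)=\chi(\nu^{-1}(\varphi)).
\end{equation*}
From the analysis in the proof of Proposition~\ref{prop; small}, for $\varphi\in\tau_{m,n,k+i}^\circ$ the fiber $\nu^{-1}(\varphi)$ is the Grassmannian $G(k,k+i)$ of $k$-planes inside the $(k+i)$-dimensional kernel. The Euler characteristic of $G(k,k+i)$ equals the number of its Schubert cells, namely $\binom{k+i}{k}=\binom{k+i}{i}$. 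Comparing with Theorem~\ref{thm; eulerobstruction}, this is precisely $Eu_{\tau_{m,n,k}}(\varphi)$. Since both functions take the same value on each stratum $\tau_{m,n,k+i}^\circ$, and these strata partition $\tau_{m,n,k}$, I conclude the equality of constructible functions
\begin{equation*}
F\nu(\ind_{\hat\tau_{m,n,k}})=Eu_{\tau_{m,n,k}}.
\end{equation*}

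Next I would apply the natural transformation $c_*$ and use its compatibility with proper pushforward, i.e.\ the commutativity $c_*\circ F\nu=A\nu\circ c_*$. On the left side, $c_*(Eu_{\tau_{m,n,k}})$ is by the very construction of MacPherson's transformation (step (3) of the summary) the pushforward to $\bbP^{mn-1}$ of the Chern--Mather class $c_M(\tau_{m,n,k})$. On the right side, since $\hat\tau_{m,n,k}$ is smooth, $c_*(\ind_{\hat\tau_{m,n,k}})=c_{SM}(\hat\tau_{m,n,k})=c(T_{\hat\tau_{m,n,k}})\cap[\hat\tau_{m,n,k}]$, and applying $A\nu=\nu_*$ yields $\nu_*(c_{SM}(\hat\tau_{m,n,k}))$. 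Equating the two sides gives exactly the claimed identity.

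The main obstacle is the fiber computation: one must verify that the scheme-theoretic fiber $\nu^{-1}(\varphi)$ really is (isomorphic to) the Grassmannian $G(k,k+i)$ and is reduced, so that its topological Euler characteristic is the naive count $\binom{k+i}{i}$, and that this matches the Euler obstruction on the nose rather than up to some correction involving the microlocal structure along deeper strata. This is where the smallness of the resolution, already proven, does the essential work—it guarantees the fibers are low-dimensional and well-behaved—but care is needed that the identification $\nu^{-1}(\varphi)\cong\{\Lambda\in G(k,n)\mid\Lambda\subset\ker\varphi\}=G(k,\dim\ker\varphi)$ holds as varieties and that the Euler characteristic is being taken in the sense compatible with $F\nu$. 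Once the fiber identification and the binomial Euler characteristic are secured, the remainder is a formal diagram chase through the functoriality of $c_*$.
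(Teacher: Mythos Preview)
Your proof is correct and follows essentially the same argument as the paper: identify the fiber over $\varphi\in\tau_{m,n,k+i}^\circ$ as $G(k,k+i)$, match its Euler characteristic $\binom{k+i}{k}$ to the local Euler obstruction via Theorem~\ref{thm; eulerobstruction}, conclude $\nu_*(\ind_{\hat\tau_{m,n,k}})=Eu_{\tau_{m,n,k}}$, and apply the functoriality of $c_*$ together with $c_*(Eu_X)=c_M(X)$. One minor remark: smallness of $\nu$ plays no role in this lemma---the fiber identification follows directly from the incidence-correspondence definition of $\hat\tau_{m,n,k}$, and smallness is only used later (Theorem~\ref{thm; charcycle}) for the intersection cohomology computation---so the concern in your final paragraph is unnecessary.
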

\begin{proof}
Let $c_*$ be the natural transformation defined in \S\ref{S; c_*}.
For any $\varphi\in\tau_{m,n,k+i}^\circ$, the fiber of $\hat\tau_{m,n,k}$ at $\varphi$ is $\nu^{-1}(\varphi)\cong G(k,k+i)$. By the above theorem, the local Euler obstruction of $\tau_{m,n,k}$ at $\varphi$ equals
\[
Eu_{\tau_{m,n,k}}(\varphi)=\binom{k+i}{k}=\chi(G(k,k+i)) \/.
\]
Hence we have 
\begin{align*}
\nu_*(\ind_{\hat\tau_{m,n,k}})
&=\sum_{i=0}^{n-1-k} \chi(G(k,k+i)) \ind_{\tau_{m,n,k+i}^\circ}\\
&= \sum_{i=0}^{n-1-k} \binom{k+i}{k} \ind_{\tau_{m,n,k+i}^\circ} \\
&= Eu_{\tau_{m,n,k}} \/.
\end{align*}
Recall that for any variety $X$, $c_M(X)=c_*(Eu_X)$. By the functorial property of $c_*$ one gets
\begin{align*}
c_M(\tau_{m,n,k})
&=\nu_* c_*(\ind_{\hat\tau_{m,n,k}}) \\
&=\nu_*(c_{SM}(\hat\tau_{m,n,k}))  \\
&=\nu_*(c(T_{\hat\tau_{m,n,k}})\cap [\hat\tau_{m,n,k}]) \/. \qedhere
\end{align*}
\end{proof}

\subsection{Main Formula}
Let $N=mn-1$. We recall that the Chow ring of $\bbP^N$ may be realized as
$\bbZ[H]/(H^{N+1})$, where $H$ is the hyperplane class 
$c_1(\calO(1))\cap [\bbP^N]$. The Chern-Mather class $c_M(\tau_{m,n,k})=\nu_*c_{SM}(\hat\tau_{m.n.k})$ then admits the form of a polynomial in $H$. We denote this polynomial by 
\begin{align*}
\Gamma_{m,n,k}=\Gamma_{m,n,k}(H)=\sum_{l=0}^N \gamma_l H^l.
\numberthis \label{eq: gamma}
\end{align*}
Here $\gamma_l=\gamma_l(m,n,k)$ and $\Gamma_{m,n,k}(H)$ are also functions of $m,n,k$. 

We denote $S$ and $Q$ to be the universal sub and quotient bundle over the Grassmanian $G(k,n)$. As shown in~\cite[Appendix B.5.8]{INT} , the tangent bundle of $G(k,n)$ can be identified as 
\[
\calT_{G(k,n)}=Hom(S,Q)=S^\vee\otimes Q .
\]
For $k\geq 1$, $i,p=0,1\cdots m(n-k)$, we define the following integers
\begin{align*}
A_{i,p}(k)= A_{i,p}(m,n,k) &:=\int_{G(k,n)} c(S^\vee\otimes Q)c_i(Q^{\vee m})c_{p-i}(S^{\vee m}) \cap [G(k,n)] \\
B_{i,p}(k)=B_{i,p}(m,n,k) &:=\binom{m(n-k)-p}{i-p}
\end{align*}
and let
\[
A(k)=A(m,n,k)=\left[ A_{i,p}(k) \right]_{i,p}\quad, \quad B(k)=B(m,n,k)=\left[ B_{i,p}(k) \right]_{i,p}
\]
be $m(n-k)+1 \times m(n-k)+1$ matrices. Here we assume $\binom{a}{b}=0$ for $a<b$ or $a<0$ or $b<0$. 
\begin{remark}
Since the Chern classes of the universal bundles on a Grassmannian are already known, $A_{i,p}(k)$ can be easily computed by Schubert calculus. 
One can use the Schubert2 package 
in Macaulay2~\cite{M2} to compute the numbers. 
\end{remark}
\begin{ex}
\label{ex; 331}
Let $m=n=3$, and $k=1$. One has:
\[
A(3,3,1)=
\begin{bmatrix}
3 & 9 & 3 & 0 & 0 & 0 & 0 \\
0 &-9 &-9 & 0 & 0 & 0 & 0 \\
0 & 0 & 6 & 0 & 0 & 0 & 0 \\
0 & 0 & 0 & 0 & 0 & 0 & 0 \\
0 & 0 & 0 & 0 & 0 & 0 & 0  \\
0 & 0 & 0 & 0 & 0 & 0 & 0  \\
0 & 0 & 0 & 0 & 0 & 0 & 0  \\
\end{bmatrix} . 
\]  
Let $m=4$, $n=3$ and $k=2$. One has:
\[
A(4,3,2)=
\begin{bmatrix}
3 &  12 &  10 & 0 & 0 \\
0 & -12 & -16 & 0 & 0 \\ 
0 & 0 & 6 & 0 & 0 \\
0 & 0 & 0 & 0 & 0 \\
0 & 0 & 0 & 0 & 0 
\end{bmatrix} .
\]  

\end{ex}

The following theorem gives a formula to explicitly compute the Chern-Mather class of $\tau_{m,n,k}$:
\begin{thm}[Main Formula] 
\label{thm; cmalgorithm}
Assume that $n\leq m$, and consider the $m(n-k)+1 \times m(n-k)+1$ matrix 
\[
\calH(k)=\calH(m,n,k)=[H^{mk+j-i}]_{ij},
\]
 where $H$ is the hyperplane class in $\bbP^{mn-1}$.
For $k\geq 1$, the pushforward of $c_M(\tau_{m,n,k})$ to $\bbP^{mn-1}$ equals
\[
c_M(\tau_{m,n,k})=\tr(A(k)\cdot \calH(k)\cdot B(k)).
\]
\end{thm}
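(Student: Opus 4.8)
The plan is to combine Lemma~\ref{lemma; cmeu}, which gives
\[
c_M(\tau_{m,n,k})=g_*\bigl(c(\calT_{\hat\tau_{m,n,k}})\cap[\hat\tau_{m,n,k}]\bigr),
\]
where $g\colon\hat\tau_{m,n,k}\to\bbP^{mn-1}$ is the composite of $\nu$ with the inclusion $\tau_{m,n,k}\hookrightarrow\bbP^{mn-1}$, with the projective-bundle description $\hat\tau_{m,n,k}=\bbP(Q^{\vee m})$, and to push the resulting Chern class forward to $\bbP^{mn-1}$ by reducing everything to intersection numbers on $G(k,n)$. Throughout write $e=m(n-k)$ for the rank of $Q^{\vee m}$, let $\rho$ be the projection to $G(k,n)$, and set $\xi=c_1(\calO_{\hat\tau_{m,n,k}}(1))$. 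The crucial bridge is that, since $\hat\tau_{m,n,k}=\bbP(Q^\vee\otimes V_m)$ sits inside $G(k,n)\times\bbP^{mn-1}=\bbP(V_n^\vee\otimes V_m)$ as a linear subbundle, the relative hyperplane class satisfies $\xi=g^*H$; this is what will let me trade powers of $\xi$ for powers of $H$.

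First I would feed the Euler sequence~\eqref{eulersequence} into multiplicativity of the total Chern class, using $c(\calO)=1$, to obtain
\[
c(\calT_{\hat\tau_{m,n,k}})=\rho^*c(S^\vee\otimes Q)\cdot c\bigl(\rho^*(Q^{\vee m})\otimes\calO(1)\bigr),
\]
and then expand the last factor by the tensor-with-a-line-bundle formula:
\[
c\bigl(\rho^*(Q^{\vee m})\otimes\calO(1)\bigr)=\sum_{i,q\geq0}\binom{e-i}{q}\,\xi^{q}\,\rho^*c_i(Q^{\vee m}).
\]
The binomial coefficients $\binom{e-i}{q}$ produced here are exactly the entries of $B(k)$, and this step already isolates the two factors $c(S^\vee\otimes Q)$ and $c_i(Q^{\vee m})$ occurring inside $A_{i,p}(k)$.

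Next I would compute $g_*=\mathrm{pr}_{2*}\circ\iota_*$ through the embedding $\iota\colon\hat\tau_{m,n,k}\hookrightarrow G(k,n)\times\bbP^{mn-1}$. The geometric input is the class of the projective subbundle: the quotient of $V_n^\vee\otimes V_m$ by $Q^\vee\otimes V_m$ is $S^\vee\otimes V_m=S^{\vee m}$ (rank $mk$), so $\hat\tau_{m,n,k}$ is the zero scheme of the tautological section of $\mathrm{pr}_1^*S^{\vee m}\otimes\calO(1)$ and hence
\[
\iota_*[\hat\tau_{m,n,k}]=c_{mk}\bigl(\mathrm{pr}_1^*S^{\vee m}\otimes\calO(1)\bigr)=\sum_{l=0}^{mk}H^{mk-l}\,\mathrm{pr}_1^*c_l(S^{\vee m}).
\]
This is where the remaining factor $c_{p-i}(S^{\vee m})$ of $A_{i,p}(k)$ is born (equivalently it is the Segre class, since $s(Q^{\vee m})=c(S^{\vee m})$ via $0\to Q^\vee\to V_n^\vee\to S^\vee\to0$). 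Using $\iota^*H=\xi$, the projection formula for $\iota$, and the fact that $\mathrm{pr}_{2*}$ simply integrates the Grassmannian factor, every term collapses to a number $\int_{G(k,n)}c(S^\vee\otimes Q)\,c_i(Q^{\vee m})\,c_{p-i}(S^{\vee m})=A_{i,p}(k)$ (with $p=i+l$) times a monomial $H^{mk+q+i-p}$ whose exponent is read off the matrix $\calH(k)$.

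Assembling the three contributions yields
\[
c_M(\tau_{m,n,k})=\sum_{i,p,q}\binom{e-i}{q}\,A_{i,p}(k)\,H^{mk+q+i-p},
\]
and the final step is to recognize this triple sum as a trace: substituting $c=i+q$ turns $\binom{e-i}{q}$ into $B(k)_{c,i}=\binom{e-i}{c-i}$ and the exponent into $mk+c-p$, so the summand equals $A(k)_{i,p}\,\calH(k)_{p,c}\,B(k)_{c,i}$; summing over $p,c$ gives the $(i,i)$ entry of $A(k)\calH(k)B(k)$ and summing over $i$ gives $\tr(A(k)\calH(k)B(k))$. I expect the main obstacle to be this third step: $g$ is \emph{not} the bundle projection (it lands in $\bbP^{mn-1}$, not in $G(k,n)$), so the pushforward must be routed through the product $G(k,n)\times\bbP^{mn-1}$, and all the duality conventions---the identification $\xi=g^*H$, the computation of $[\hat\tau_{m,n,k}]$ as the top Chern class of $S^{\vee m}\otimes\calO(1)$, and the equivalent Segre identity $s(Q^{\vee m})=c(S^{\vee m})$---must be pinned down consistently. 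Once these are fixed, the remaining index matching is routine bookkeeping.
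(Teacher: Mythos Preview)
Your argument is correct and complete; the identification $\xi=g^*H$, the class $\iota_*[\hat\tau_{m,n,k}]=c_{mk}(\mathrm{pr}_1^*S^{\vee m}\otimes\calO(1))$, and the index substitution $c=i+q$ all check out, and the trace formula drops out exactly as you describe.

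The route, however, is genuinely different from the paper's. The paper never works in $G(k,n)\times\bbP^{mn-1}$. Instead it extracts the coefficient $\gamma_l$ of $H^l$ one at a time: it writes $\gamma_l=\int_{\bbP^{mn-1}}H^{N-l}\cap c_M(\tau_{m,n,k})$, pulls back to $\hat\tau_{m,n,k}$ by the projection formula, and then pushes \emph{down} to $G(k,n)$ along the projective-bundle map $\rho$, using $\rho_*(\xi^e\cap[\hat\tau_{m,n,k}])=s_{e+1-m(n-k)}(Q^{\vee m})\cap[G(k,n)]=c_{e+1-m(n-k)}(S^{\vee m})\cap[G(k,n)]$. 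This yields $\gamma_l=\tr(\alpha^l(k)B(k))$ for an auxiliary matrix $\alpha^l(k)$ depending on $l$, and a separate matching step (a shift lemma $\alpha^l_{i,p}=\alpha^{l+j}_{i,p+j}$ plus a case analysis on $l\gtrless mk$) is then needed to show $\sum_l\gamma_lH^l=\tr(A(k)\calH(k)B(k))$. Your approach bypasses this matching entirely: by pushing \emph{sideways} to $\bbP^{mn-1}$ through the product and computing $\iota_*[\hat\tau_{m,n,k}]$ explicitly, you land directly on the trace expression without ever isolating a single coefficient. The paper's method has the advantage of producing a closed formula for each $\gamma_l$ as an intersection number on $G(k,n)$; yours is shorter and makes the origin of the three matrix factors $A$, $\calH$, $B$ transparent (Grassmannian integral, fundamental class of the subbundle, Chern class of the twist, respectively).
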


Applying the above theorem, one obtains an expression
\begin{equation}\label{eq; betacm}
c_M(\tau_{m,n,k}) = \sum_{l=0}^{mn-1} \beta_l H^{mn-1-l}
\end{equation}
where the coefficients $\beta_l$ are explicit (albeit complicated) expressions
involving degrees of classes in the Grassmannian $G(k,n)$.
\begin{ex}
When $m=n$, $k=1$, based on the explicit computation of $A_{i,p}$, the Chern-Mather class of $\tau_{n,n,1}$ equals
\begin{align*}
c_M(\tau_{n,n,1})
=\sum_{l=0}^{n^2-1} \sum_{i,p=0}^{n-1} (-1)^i \binom{n}{p+1}\binom{n+i-1}{i}\binom{n}{p-i}\binom{n(n-1)-i}{l-n+p-i} H^l .
\end{align*}
One will observe that $\beta_0=0$ and $ \beta_1=n$.
\end{ex}

The following corollary gives a formula for the Chern-Schwartz-MacPherson class of $\tau_{m,n,k}$.
\label{section; csm}
\begin{cor}[Main Formula II]
\label{thm; csmalgorithm}
\begin{enumerate}
\item
The Chern-Schwartz-MacPherson class $c_{SM}(\tau_{m,n,k})$ is given by
\[
c_{SM}(\tau_{m,n,k})=\sum_{i=0}^{n-1-k}(-1)^{i}\binom{k+i-1}{k-1} c_M(\tau_{m,n,k+i}).
\]
When $k=0$, $\tau_{m,n,0}=\bbP^{mn-1}$.
\item
Recall that $\tau_{m,n,k}^{\circ}:=\tau_{m,n,k}\smallsetminus\tau_{m,n,k+1}$. Its $c_{SM}$ class is given by
\[
c_{SM}(\tau_{m,n,k}^{\circ})
= \sum_{i=0}^{n-1-k} (-1)^i\binom{k+i}{k} c_M(\tau_{m,n,k+i}).
\]
\end{enumerate}
\end{cor}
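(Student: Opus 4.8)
The plan is to work entirely inside the group of constructible functions $F(\bbP^{mn-1})$ and to apply the natural transformation $c_*$ only at the very end, exploiting its linearity together with the two facts already in hand: $c_*(\ind_Y)=c_{SM}(Y)$ by definition, and $c_*(Eu_{\tau_{m,n,l}})=c_M(\tau_{m,n,l})$ (this is precisely the identity $c_M(X)=c_*(Eu_X)$ used in the proof of Lemma~\ref{lemma; cmeu}). Thus both formulas will follow once I express the indicator functions $\ind_{\tau_{m,n,k}^\circ}$ and $\ind_{\tau_{m,n,k}}$ as explicit integer linear combinations of the Euler obstruction functions $\{Eu_{\tau_{m,n,l}}:l=k,\dots,n-1\}$, and then push the combination through $c_*$.

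The input is the relation extracted from the proof of Lemma~\ref{lemma; cmeu}: since the fiber of $\hat\tau_{m,n,l}$ over a point of $\tau_{m,n,s}^\circ$ is $G(l,s)$ with $\chi(G(l,s))=\binom{s}{l}$, one has $Eu_{\tau_{m,n,l}}=\sum_{s=l}^{n-1}\binom{s}{l}\,e_s$ for each $l$ in the range $k\le l\le n-1$, where I abbreviate $e_s:=\ind_{\tau_{m,n,s}^\circ}$. This is an upper-triangular system with matrix $\bigl[\binom{s}{l}\bigr]$, which I would invert by binomial inversion to obtain $e_l=\sum_{s=l}^{n-1}(-1)^{s-l}\binom{s}{l}\,Eu_{\tau_{m,n,s}}$. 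Specializing to $l=k$ and reindexing $s=k+i$ gives $\ind_{\tau_{m,n,k}^\circ}=\sum_{i=0}^{n-1-k}(-1)^i\binom{k+i}{k}\,Eu_{\tau_{m,n,k+i}}$; applying $c_*$ yields part~(2) at once.

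For part~(1) I would start from the stratification identity $\ind_{\tau_{m,n,k}}=\sum_{l=k}^{n-1}e_l$, substitute the inverted expression for each $e_l$, and swap the order of summation to collect the coefficient of each $Eu_{\tau_{m,n,s}}$. That coefficient is the partial alternating sum $\sum_{l=k}^{s}(-1)^{s-l}\binom{s}{l}=\sum_{j=0}^{s-k}(-1)^j\binom{s}{j}$, which by the standard telescoping identity $\sum_{j=0}^{r}(-1)^j\binom{s}{j}=(-1)^r\binom{s-1}{r}$ equals $(-1)^{s-k}\binom{s-1}{k-1}$. Reindexing $s=k+i$ then gives $\ind_{\tau_{m,n,k}}=\sum_{i=0}^{n-1-k}(-1)^i\binom{k+i-1}{k-1}\,Eu_{\tau_{m,n,k+i}}$, and applying $c_*$ produces part~(1). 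The case $k=0$ is handled separately by the observation $\tau_{m,n,0}=\bbP^{mn-1}$ recorded in the statement.

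The conceptual content here is light: everything rests on the linearity of $c_*$ and on the stratification of $\tau_{m,n,k}$, so the only place that demands care is the bookkeeping of the two combinatorial inversions. I expect the main (though entirely routine) obstacle to be verifying the inverse-matrix identity $\sum_{s}(-1)^{t-s}\binom{t}{s}\binom{s}{l}=\delta_{l,t}$, which I would prove via the absorption identity $\binom{t}{s}\binom{s}{l}=\binom{t}{l}\binom{t-l}{s-l}$ followed by $\sum_{j}(-1)^{t-l-j}\binom{t-l}{j}=(1-1)^{t-l}=\delta_{l,t}$, together with the partial-sum identity needed for part~(1). Once these elementary identities are in place, the two formulas are immediate consequences of the functoriality recorded in \S\ref{S; c_*}.
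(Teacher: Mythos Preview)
Your proposal is correct and follows essentially the same approach as the paper: both set up the upper-triangular binomial relation between the $Eu_{\tau_{m,n,l}}$ (equivalently the $c_M(\tau_{m,n,l})$, via Lemma~\ref{lemma; cmeu}) and the $\ind_{\tau_{m,n,s}^\circ}$ (equivalently the $c_{SM}(\tau_{m,n,s}^\circ)$), and both invert it with the same absorption-plus-binomial-theorem argument. The only cosmetic difference is that you carry out the inversion at the level of constructible functions and apply $c_*$ at the end, whereas the paper writes the same matrix equation directly for the classes; you also make the derivation of part~(1) from part~(2) via the partial-sum identity more explicit than the paper does.
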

\begin{proof}
Recall that the fiber of $\hat\tau_{m,n,k}$ at $\varphi\in\tau_{m,n,k+i}^\circ$ is $\nu^{-1}(\varphi)\cong G(k,k+i)$, and $\chi(G(k,n))=\binom{n}{k}$. Hence the functorial property of $c_{SM}$ class shows that:
\[
\left( \begin{array}{c}
\nu_*(c_{SM}(\hat\tau_{m,n,k}))\\
\nu_*(c_{SM}(\hat\tau_{m,n,k+1}))\\
\cdots \\
\nu_*(c_{SM}(\hat\tau_{m,n,n-1}))
  \end{array} \right) =
 \left( \begin{array}{cccc}
\binom{k}{k} & \binom{k+1}{k} & \cdots & \binom{n-1}{k} \\
0 & \binom{k+1}{k+1} & \cdots & \binom{n-1}{k+1} \\
\cdots & \cdots & \cdots & \cdots \\
0 & 0 & \cdots & \binom{n-1}{n-1} 
 \end{array} \right) 
\left( \begin{array}{c}
c_{SM}(\tau^{\circ}_{m,n,k})\\
c_{SM}(\tau^{\circ}_{m,n,k+1})\\
\cdots \\
c_{SM}(\tau^{\circ}_{m,n,n-1})
\end{array} \right) .
\] 	
Also, the lemma~\ref{lemma; cmeu} shows that $\nu_*(c_{SM}(\hat\tau_{m,n,i}))=c_M(\tau_{m,n,i})$. Hence one will get the desired formula after inverting the binomial matrix. The inverse matrix is given by the following lemma.
\begin{lemma}
\[
 \left( \begin{array}{cccc}
\binom{k}{k} & \binom{k+1}{k} & \cdots & \binom{n-1}{k} \\
0 & \binom{k+1}{k+1} & \cdots & \binom{n-1}{k+1} \\
\cdots & \cdots & \cdots & \cdots \\
0 & 0 & \cdots & \binom{n-1}{n-1} 
 \end{array} \right) 
\times 
\left( \begin{array}{cccc}
(-1)^0\binom{k}{k} & (-1)^1\binom{k+1}{k} & \cdots & (-1)^{n-1-k}\binom{n-1}{k} \\
0 & (-1)^0\binom{k+1}{k+1} & \cdots & (-1)^{n-2-k}\binom{n-1}{k+1} \\
\cdots & \cdots & \cdots & \cdots \\
0 & 0 & \cdots & (-1)^0\binom{n-1}{n-1} 
 \end{array} \right) 
=I
\] 	
\end{lemma}
To prove the lemma, one takes the $i$th row of the left:
\[
r_i=[\overbrace{0,\cdots,0}^{i-1}, \binom{k+i-1}{k+i-1}, \binom{k+i}{k+i-1}, \cdots, \binom{n-1}{k+i-1}]; 
\]
and the $j$th column of the right
\[
c_j=[(-1)^{j-1} \binom{k+j-1}{k},  (-1)^{j-2}\binom{k+j-1}{k+1},  \cdots,    (-1)^{j-j}\binom{k+j-1}{k+j-1}, 0,\cdots, 0 ]^t.                  
\]
When $i=j$, then $r_i\cdot c_j=(-1)^{j-j}\binom{k+j-1}{k+j-1} \binom{k+i-1}{k+i-1}=1$.
When $i> j$, one can easily observe that $r_i\cdot c_j=0$. 
When $i<j$, the dot product $r_i\cdot c_j$ is given by
\begin{align*}
r_i\cdot c_j
=& \sum_{p=i}^j (-1)^{j-p}\binom{k+j-1}{k+p-1}\binom{k+p-1}{k+i-1} \\
=& \sum_{p=i}^j (-1)^{j-p}\frac{(k+j-1)!(k+p-1)!(j-i)!}{(k+i-1)!(j-i)!(k+p-1)!(j-p)!(p-i)!} \\
=& \binom{k+j-1}{k+i-1} \sum_{p=i}^j \binom{j-i}{j-p}(-1)^{j-p} \\
=& \binom{k+j-1}{k+i-1} (1-1)^{j-i}=0 .
\end{align*}
\end{proof}

The rest of this section is devoted to the proof of the Main formula I.
\subsection{Proof of the Main Theorem}
For each coefficient $\gamma_l=\gamma_l(m,n,k)$ in~\eqref{eq: gamma}, we have:
$\gamma_l = \int_{\bbP^N} H^{N-l}\cap c_M(\tau_{m,n,k})$.

Since $\int_X \alpha =\int_Y f_*\alpha$ for any class $\alpha$ and any proper morphism 
$f\colon X\to Y$, by the projection formula we have
\begin{align*}
\gamma_l
=&~ \int_{\bbP^N} H^{N-l} \cap c_M(\tau_{m,n,k})
=~ \int_{\tau_{k}} c_1(\calO(1))^{N-l}\cap \nu_* c_{SM}(\hat\tau_{m,n,k}) \\
=&~ \int_{\hat\tau_{m,n,k}} c_1(\calO(1))^{N-l}\cap c_{SM}(\hat\tau_{m,n,k}) 
=~ \int_{\hat\tau_{m,n,k}} c_1(\calO(1))^{N-l}c(\calT_{\hat\tau_{m,n,k}})\cap [\hat\tau_{m,n,k}] 
\end{align*}
where we denote by
$\calO(1)$ the pull back of the hyperplane bundle on $\bbP^N$ by $i$ and $\nu$.
Note that the pull-back of this bundle to $\hat\tau_{m,n,k}$ agrees with the
tautological line bundle $\calO_{\hat\tau_{m,n,k}}(1)$.

By the Euler sequence~\eqref{eulersequence} and
the Whitney formula one has
\begin{equation*}
c(\calT_{\hat\tau_{m,n,k}}) = c(\rho^*(Q^{\vee m})\otimes \calO_{\hat\tau_{m,n,k}}(1))c(\rho^*\calT_{G(k,n)}).
\end{equation*}
So we get.
\begin{equation*}
\gamma_l = \int_{\hat\tau_{m,n,k}} c(\rho^*\calT_{G(k,n)})c(\rho^*(Q^{\vee m})\otimes \calO(1))c_1(\calO(1))^{N-l}\cap [\hat\tau_{m,n,k}] .
\end{equation*}

This expression may be expanded using \cite[Example 3.2.2]{INT},
and we obtain
\label{lemma; gamma}
\begin{lemma}
{\small
\begin{align*}
\gamma_l
=&~ \int_{\hat\tau_{m,n,k}} \sum_{p=0}^{m(n-k)}\sum_{i=0}^{p} \binom{m(n-k)-i}{p-i}
c(\rho^*\calT_{G(k,n)})c_i(\rho^*(Q^{\vee m}))c_1(\calO(1))^{N-l+p-i} \cap [\hat\tau_{m,n,k}]
\end{align*}
}
\end{lemma}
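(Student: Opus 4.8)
The plan is to obtain the stated expansion directly, by unwinding the total Chern class $c(\rho^*(Q^{\vee m})\otimes \calO(1))$ appearing in the preceding display
\[
\gamma_l = \int_{\hat\tau_{m,n,k}} c(\rho^*\calT_{G(k,n)})\, c(\rho^*(Q^{\vee m})\otimes \calO(1))\, c_1(\calO(1))^{N-l}\cap [\hat\tau_{m,n,k}].
\]
First I would record the rank of the bundle being tensored: since $Q$ is the universal quotient bundle on $G(k,n)$ it has rank $n-k$, so $Q^{\vee m}=(Q^\vee)^{\oplus m}$ has rank $m(n-k)$, and pulling back along $\rho$ preserves this rank.

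Next I would invoke the standard Chern-class identity for a tensor product with a line bundle \cite[Example 3.2.2]{INT}: for a bundle $E$ of rank $e$ and a line bundle $L$,
\[
c_p(E\otimes L) = \sum_{i=0}^{p}\binom{e-i}{p-i}\, c_i(E)\, c_1(L)^{p-i}.
\]
Taking $E=\rho^*(Q^{\vee m})$, $e=m(n-k)$, and $L=\calO(1)$, and summing over $p$ from $0$ to $m(n-k)$ gives the total Chern class as a double sum in $c_i(\rho^*(Q^{\vee m}))$ and powers of $c_1(\calO(1))$.

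Finally I would substitute this double sum into the display for $\gamma_l$ and collect the powers of $c_1(\calO(1))$: the factor $c_1(\calO(1))^{N-l}$ already present merges with the $c_1(\calO(1))^{p-i}$ produced by the identity to give the exponent $N-l+p-i$, which is exactly the claimed formula. There is no real obstacle here; the argument is a mechanical application of a known identity. The only points that need care are the rank bookkeeping, so that the upper limit of the $p$-sum is $m(n-k)$ and the top entry of the binomial coefficient reads $m(n-k)-i$, and the convention fixed earlier that $\binom{a}{b}=0$ for $b>a$ or for negative arguments, which guarantees that the out-of-range terms vanish automatically.
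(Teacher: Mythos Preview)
Your proposal is correct and is exactly the approach the paper takes: the paper simply says the expression ``may be expanded using \cite[Example~3.2.2]{INT}'' and records the result, which is precisely the tensor-by-a-line-bundle identity you apply, together with the rank computation $\rk(\rho^*Q^{\vee m})=m(n-k)$ and the merging of the $c_1(\calO(1))$ powers that you spell out.
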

We want to express the coefficients $\gamma_l$ as degrees of certain
classes in $G(k,n)$. For this, we will need the following observation.
\begin{lemma}
Let $S$ denote the universal subbundle of the Grassmannian. Then
\[
\rho_*(c_1(\calO(1))^e\cap [\hat\tau_{m,n,k}])= c_{e+1-m(n-k)}({S^\vee}^m)\cap [G(k,n)].
\] 
\end{lemma}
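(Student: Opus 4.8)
The plan is to recognize the left-hand side as the defining expression for a Segre class of the bundle $Q^{\vee m}$ on the Grassmannian, and then to rewrite that Segre class as a Chern class of $S^{\vee m}$ by means of the tautological sequence. Throughout I write $r:=m(n-k)$ for the rank of $Q^{\vee m}$, so that $\hat\tau_{m,n,k}=\bbP(Q^{\vee m})$ has relative dimension $r-1$ over $G(k,n)$. First I would pin down the projective bundle convention, since the Segre formula is sensitive to it. The identification in the proposition realizes a point of $\hat\tau_{m,n,k}$ over $\Lambda$ as a line in the fiber $(Q^{\vee m})|_\Lambda=Hom(V_n/\Lambda,V_m)$; equivalently, the inclusion $\calO\hookrightarrow\rho^*(Q^{\vee m})\otimes\calO_{\hat\tau_{m,n,k}}(1)$ appearing in the Euler sequence~\eqref{eulersequence} exhibits $\calO(-1)$ as the tautological line subbundle of $\rho^*(Q^{\vee m})$. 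Thus $\calO(1)$ is the tautological bundle in the classical ``lines in $E$'' convention, the same one used in~\cite[\S3.1]{INT}, and $c_1(\calO(1))$ restricts to the hyperplane class on each fiber $\bbP^{r-1}$.

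With the convention fixed, the quantity $\rho_*(c_1(\calO(1))^e\cap[\hat\tau_{m,n,k}])$ is exactly the expression defining Segre classes: for every $j\geq 0$,
\[
\rho_*\bigl(c_1(\calO(1))^{(r-1)+j}\cap[\hat\tau_{m,n,k}]\bigr)=s_j(Q^{\vee m})\cap[G(k,n)].
\]
Matching exponents, $(r-1)+j=e$ gives $j=e-r+1=e+1-m(n-k)$, so the left-hand side of the lemma equals $s_{e+1-m(n-k)}(Q^{\vee m})\cap[G(k,n)]$.

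It then remains to identify $s_j(Q^{\vee m})$ with $c_j(S^{\vee m})$. For this I would dualize the tautological sequence $0\to S\to\calO^{\oplus n}\to Q\to 0$ to obtain $0\to Q^\vee\to\calO^{\oplus n}\to S^\vee\to 0$, whence $c(Q^\vee)c(S^\vee)=1$ in the Chow ring of $G(k,n)$. Since the total Segre class is the inverse of the total Chern class, $s(Q^\vee)=c(Q^\vee)^{-1}=c(S^\vee)$. Both total classes are multiplicative over direct sums, so $s(Q^{\vee m})=s(Q^\vee)^m=c(S^\vee)^m=c(S^{\vee m})$; reading off the component in degree $j=e+1-m(n-k)$ yields the claim.

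I do not expect a genuine obstacle here: the content is entirely the Segre-class formula for a projective bundle together with the elementary Grassmannian relation $c(Q^\vee)c(S^\vee)=1$. The only point demanding care is the index bookkeeping — correctly using that $Q^{\vee m}$ has rank $m(n-k)$ and fibers of dimension $m(n-k)-1$, so that the Segre shift lands on $e+1-m(n-k)$ rather than a neighboring index, as well as a final sanity check on the sign convention of the Segre class (for a line bundle $L$ one has $s_j(L)=(-1)^j c_1(L)^j=[c(L)^{-1}]_j$, confirming $s=c^{-1}$). As a consistency check, both sides are cycles of dimension $(m+k)(n-k)-1-e$.
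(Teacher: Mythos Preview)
Your proof is correct and follows essentially the same route as the paper: identify the pushforward as a Segre class of $Q^{\vee m}$ via the definition in~\cite[\S3.1]{INT}, then use the tautological sequence on $G(k,n)$ (and its dual) to convert $s(Q^{\vee m})$ into $c(S^{\vee m})$. Your version is more explicit about the projective-bundle convention and the index bookkeeping, but the argument is the same.
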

\begin{proof}
Since $\hat\tau_{m,n,k}=\bbP({Q^\vee}^m)$, by the definition of Segre class~\cite[\S 3.1]{INT}, we have:
\[
\rho_*(c_1(\calO(1))^e\cap [\hat\tau_{m,n,k}])=s_{e+1-m(n-k)}({Q^\vee}^m)\cap [G(k,n)].
\]

The exact sequence over the Grassmanian
\[
\begin{tikzcd}
0 \arrow{r} & S \arrow{r} & \calO_{G(k,n)}^n \arrow{r} & Q \arrow{r} & 0
\end{tikzcd}
\]
implies that  $s(Q)=c(Q)^{-1}=c(S)$. Taking duals we get the desired formula.   
\end{proof} 

Since $N=mn-1$, we have $N+1-m(n-k)=mk$, thus
\begin{align*}
\rho_*(c_1(\calO(1))^{N}\cap [\hat\tau_{m,n,k}])
=&~ c_{N+1-m(n-k)}({S^\vee}^m)\cap [G(k,n)] \\
=&~ c_{mk}({S^\vee}^m)\cap [G(k,n)].
\end{align*}
Pushing forward to $G(k,n)$ one gets 
{\small
\begin{align*}
\gamma_l
=&~ \int_{\hat\tau_{m,n,k}} \sum_{p=0}^{m(n-k)}\sum_{i=0}^{p} \binom{m(n-k)-i}{p-i} 
c(\rho^*\calT_{G(k,n)})c_i(\rho^*(Q^{\vee m}))c_1(\calO(1))^{N-l+p-i} \cap [\hat\tau_{m,n,k}] \\
=&~ \int_{G(k,n)} \sum_{p=0}^{m(n-k)}\sum_{i=0}^{p} \binom{m(n-k)-i}{p-i} 
c(\calT_{G(k,n)})c_i(Q^{\vee m})c_{mk-l+p-i}({S^\vee}^m) \cap [G(k,n)]\\
=&~ \sum_{p=0}^{m(n-k)}\sum_{i=0}^{p} \binom{m(n-k)-i}{p-i} 
\int_{G(k,n)} c(\calT_{G(k,n)})c_i(Q^{\vee m})c_{mk-l+p-i}({S^\vee}^m) \cap [G(k,n)]\\
=&~ \sum_{p=0}^{m(n-k)}\sum_{i=0}^{p} \binom{m(n-k)-i}{p-i} 
\int_{G(k,n)} c(\calT_{G(k,n)})c_i(Q^{\vee m})c_{mk-l+p-i}({S^\vee}^m) \cap [G(k,n)].
\end{align*}
}

Recall that $\calT_{G(k,n)}=Hom(S,Q)=S^\vee\otimes Q $. Hence the formula reads as:
\[
\gamma_l
= \sum_{p=0}^{m(n-k)}\sum_{i=0}^{p} \binom{m(n-k)-i}{p-i} 
\int_{G(k,n)} c(S^\vee\otimes Q)c_i(Q^{\vee m})c_{mk-l+p-i}({S^\vee}^m) \cap [G(k,n)].
\]

Define the integration part by:
\[
\alpha_{i,p}^l(k):=\int_{G(k,n)} 
c(S^\vee\otimes Q)c_i(Q^{\vee m})c_{mk-l+p-i}({S^\vee}^m) \cap [G(k,n)] 
\]
and recall that
\[
B_{i,p}(k) =\binom{m(n-k)-p}{i-p}, \quad B(k)=\left[ B_{i,p}\right]_{i,p}.
\]
Let $\alpha^l(k)=\left[ \alpha_{i,p}^l\right]_{i,p}$
be the $m(n-k)-1\times m(n-k)-1$ matrix, we have:
\label{formula; gamma}
\begin{align*}
\gamma_l
=&~ \sum_{p=0}^{m(n-k)}\sum_{i=0}^{p} \binom{m(n-k)-i}{p-i} \alpha_{i,p}^l(k) \\
=&~ \sum_{p=0}^{m(n-k)}\sum_{i=0}^{p}  \alpha_{i,p}^l(k) B_{p,i}(k) \\
=&~ \sum_{p=0}^{m(n-k)}\sum_{i=0}^{m(n-k)}  \alpha_{i,p}^l(k) B_{p,i}(k) \\
=&~ \tr (\alpha^l(k)\cdot B(k)).
\end{align*}

\begin{remark} \
\begin{enumerate}
\item The last step is based on the convention that $\binom{x}{y}=0$ for $y<0$.
\item  $\alpha_{i,p}^l(k)$ and $B_{i,p}(k)$ are functions of $m,n,k$, and so are the matrices $\alpha^l(k)=\alpha^l(m,nk)$ and $B(k)=B(m,n,k)$.
\item Note that when $l=mk$, 
\[
\alpha_{i,p}^{mk}(k):=\int_{G(k,n)} 
c(S^\vee\otimes Q)c_i(Q^{\vee m})c_{p-i}({S^\vee}^m) \cap [G(k,n)]
\]
equals the coefficient $A_{i,p}(k)$ introduced in the theorem. Hence we have $A(k)=\alpha^{mk}(k)$.
\end{enumerate}
\end{remark}
Now we are ready to complete the proof of Theorem~\ref{thm; cmalgorithm}. Recall that
$\calH(k)=\calH(m,n,k)= [H^{mk-i+j}]_{ij}$.
\begin{prop}
We have the following formula: 
\[
\Gamma_k(H)= \nu_*c_{SM}(\hat\tau_{m,n,k})=\tr(\alpha^{mk}(k)\cdot \calH(k)\cdot B(k))=\tr(A(k)\cdot \calH(k)\cdot B(k)).
\]
\end{prop}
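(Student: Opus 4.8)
The plan is to upgrade the degree-by-degree identity $\gamma_l=\tr(\alpha^l(k)\cdot B(k))$, already obtained in the computation preceding this statement, into the single closed form by summing against $H^l$ and absorbing the powers of $H$ into the matrix $\calH(k)$. Since the remark just above records that $A(k)=\alpha^{mk}(k)$, the last equality $\tr(\alpha^{mk}(k)\cdot\calH(k)\cdot B(k))=\tr(A(k)\cdot\calH(k)\cdot B(k))$ is automatic; all the content lies in identifying the polynomial $\Gamma_{m,n,k}(H)=\sum_l\gamma_l H^l$ with the triple matrix product.

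First I would record the shift relating the matrices $\alpha^l(k)$ to $A(k)=\alpha^{mk}(k)$. Comparing the defining integral of $\alpha^l_{i,p}(k)$ with that of $A_{i,p}(k)=\alpha^{mk}_{i,p}(k)$, the factors $c(S^\vee\otimes Q)$ and $c_i(Q^{\vee m})$ are untouched, and only the degree of the ${S^\vee}^m$-Chern class changes by the reindexing $p\mapsto p-l+mk$; hence $\alpha^l_{i,p}(k)=A_{i,\,p-l+mk}(k)$. Substituting this and reindexing $q=p-l+mk$ (so that $l=mk+p-q$ and $H^l=H^{mk-q+p}=\calH_{q,p}(k)$) would give
\begin{align*}
\Gamma_{m,n,k}(H)
&=\sum_{l}\gamma_l H^l
=\sum_{l}H^l\sum_{i,p}\alpha^l_{i,p}(k)\,B_{p,i}(k)\\
&=\sum_{i,p}B_{p,i}(k)\sum_{l}A_{i,\,p-l+mk}(k)\,H^l
=\sum_{i,p,q}A_{i,q}(k)\,\calH_{q,p}(k)\,B_{p,i}(k)\\
&=\tr\bigl(A(k)\cdot\calH(k)\cdot B(k)\bigr),
\end{align*}
which is the asserted formula.

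The hard part is not the algebra but the bookkeeping of index ranges and boundary terms, and that is where I would spend the care. I must verify that extending all three summation indices to the full range $\{0,\dots,m(n-k)\}$ of the matrices introduces nothing spurious. Here $A_{i,q}(k)\neq 0$ forces $i\le q\le k(n-k)$ for degree and rank reasons on $G(k,n)$ (whose dimension is $k(n-k)$), while $B_{p,i}(k)\neq 0$ forces $i\le p\le m(n-k)$ by the binomial convention; together these confine every index to the matrix range, so passing to full matrices is harmless. They also force the exponent $l=mk-q+p\ge k(m-n+k)\ge 1$, so no negative powers of $H$ occur (in particular $\gamma_0=0$ when $k\ge 1$, matching the earlier observation that $\beta_0=0$), while the only possible top exponent $l=mn$ is annihilated by the relation $H^{mn}=0$ in $\bbZ[H]/(H^{mn})$. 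Matching the coefficient of each $H^l$ for $0\le l\le N$ against $\gamma_l=\tr(\alpha^l(k)\cdot B(k))$ then completes the identification in the Chow ring of $\bbP^{N}$.
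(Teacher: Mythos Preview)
Your proof is correct and follows essentially the same route as the paper: both rest on the shift identity $\alpha^l_{i,p}(k)=\alpha^{mk}_{i,\,p-l+mk}(k)=A_{i,\,p-l+mk}(k)$ (the first assertion of the paper's Lemma on the $\alpha^l_{i,p}$), and both then match the coefficients of $H^l$ in the trace against the precomputed $\gamma_l=\tr(\alpha^l(k)B(k))$.

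The only notable difference is in the packaging. The paper expands $\tr(A(k)\calH(k)B(k))=\sum_l\gamma'_lH^l$ and then runs a two-case argument (according to whether $l\ge mk$ or $l<mk$) to show $\gamma'_l=\gamma_l$. You instead reindex the sum $\sum_l\gamma_lH^l$ via $q=p-l+mk$ in one step and recognize the result as the triple matrix product. This is a cleaner execution of the same idea and avoids the case split; your boundary bookkeeping (the constraints $i\le q\le k(n-k)$ from nonvanishing of $A$, $i\le p\le m(n-k)$ from nonvanishing of $B$, hence $l\ge k(m-n+k)\ge 1$) plays exactly the role of the vanishing half of the paper's lemma. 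You also make explicit the $H^{mn}$ term, which the paper leaves implicit; that term genuinely has nonzero coefficient $A_{0,0}(k)B_{m(n-k),0}(k)=\binom{n}{k}$, so the equality really does live in $A_*(\bbP^{mn-1})=\bbZ[H]/(H^{mn})$ rather than in the polynomial ring, and it is good that you flag this.
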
 
\begin{proof}
First we multiply the matrix $\alpha^{mk}(k)$ and $\calH(k)$, for any $i=0,1\cdots m(n-k)$, the $i$th row is
\[
\left[ 
\sum_{p=0}^{m(n-k)} \alpha^{mk}_{i,p}(k)\cdot H^{mk-p}; \sum_{p=0}^{m(n-k)} \alpha^{mk}_{i,p}(k)\cdot H^{mk+1-p};\cdots ; \sum_{p=0}^{m(n-k)} \alpha^{mk}_{i,p}(k); \cdot H^{mn-p}
\right].
\]
Multiplying with the $i$th column of $B(k)$ gives the $i$th row and $i$th column entry of $\alpha^{mk}(k)\cdot \calH(k)\cdot B(k)$:
\[
\sum_{j=0}^{m(n-k)}\sum_{p=0}^{m(n-k)} \alpha^{mk}_{i,p}(k)H^{mk+j-p}B_{j,i}(k) .
\]
Hence:
\begin{align*}
\numberthis\label{eq; cmcoe}
\tr(A(k)\cdot \calH(k)\cdot B(k))
=&~ \tr(\alpha^{mk}(k)\cdot \calH(k)\cdot B(k)) \\
=&~ \sum_{i=0}^{m(n-k)}\sum_{j=0}^{m(n-k)}\sum_{p=0}^{m(n-k)} \alpha^{mk}_{i,p}(k)H^{mk+j-p} B_{j,i}(k) \\
=&~ \sum_{i,j,p=0}^{m(n-k)} \alpha^{mk}_{i,p}(k)B_{j,i}(k)H^{mk+j-p}
=~ \sum_l \gamma'_l H^l . 
\end{align*}
Here 
\[
\gamma'_l=\sum_i\sum_{mk+j-p=l} \alpha^{mk}_{i,p}(k)B_{j,i}(k).
\]
Now we fix the power of $H$, and compare the coefficients $\gamma'_l$ to the values of $\gamma_l$  in Lemma~\ref{formula; gamma}. The proposition holds if we can show that they match. In order to compare them, we will need the following lemma:
\begin{lemma} \
\label{lemma; Aip}
For all $j$, $\alpha^l_{i,p}(k)=\alpha^{l+j}_{i,p+j}(k)$. Moreover, if $l+i-p>mk$ or $mk-l+p>m(n-k)$,
then $\alpha^l_{i,p}(k)=0$ .
\end{lemma}
\begin{proof}
The first assertion is an immediate consequence of
the definition of $\alpha^l_{i,p}(k)$. For the second part, if $l+i-p>mk$, then $mk-l+p-i<0$ and $c_{mk-l+p-i}({S^\vee}^m)=0$.
If $mk-l+p>m(n-k)$, we can treat
$
c_i(Q^{\vee m})c_{mk-l+p-i}({S^\vee}^m)
$
as a group homomorphism from $A_*(G)$ to $A_{*-(i+mk-1+p-i)}(G)=A_{*-(mk-1+p)}(G)$, where $G=G(k,n)$. 
It vanishes because $\dim G=k(n-k)\leq n(n-k)\leq m(n-k)< (mk-1+p)$.
\end{proof}

Depending on the values of $l=mk+j-p$ and $mk$, we separate into 
two cases:
\begin{enumerate} 
\item Case 1: $j-p=s\geq 0$, $l\geq mk$. \\
We let $j=p+s$, then $\gamma'_{mk+s}$ can be expressed as:
\begin{align*}
\gamma'_{mk+s}
=&~ \sum_{i=0}^{m(n-k)}\sum_{p=0}^{m(n-k)-s} \alpha^{mk}_{i,p}(k)B_{j,i}(k) \\
=&~ \sum_{i=0}^{m(n-k)}\sum_{j=s}^{m(n-k)} \alpha^{mk}_{i,p}(k)B_{j,i}(k) \\
=&~ \sum_{i=0}^{m(n-k)}\sum_{j=s}^{m(n-k)} \alpha^{mk}_{i,j-s}(k)B_{j,i}(k)
\end{align*}
By Lemma~\ref{lemma; Aip} we have:
\begin{align*}
\gamma'_{mk+s}
= \sum_{i=0}^{m(n-k)}\sum_{j=s}^{m(n-k)} \alpha^{mk+s}_{i,j}(k)B_{j,i}(k). 
\end{align*}
and $\alpha^{mk+s}_{i,j}(k)=0$ when $j<s$, for $mk+s+i-j>mk$. Hence
\begin{align*}
\gamma'_{mk+s}
=&~ \sum_{i=0}^{m(n-k)}\sum_{j=0}^{m(n-k)} \alpha^{mk+s}_{i,j}B_{j,i}(k) \\
=&~ \tr(\alpha^{mk+s}(k)B(k)) \\
=&~ \gamma_{mk+s}.
\end{align*}

\item Case 2: $p-j=s>0$, $l<mk$. \\
We let $p=j+s$, and by Lemma~\ref{lemma; Aip} we have.
\begin{align*}
\gamma'_{mk-s}
=&~ \sum_{i=0}^{m(n-k)}\sum_{p=s}^{m(n-k)} \alpha^{mk}_{i,p}(k)B_{j,i}(k) \\
=&~ \sum_{i=0}^{m(n-k)}\sum_{j=0}^{m(n-k)-s} \alpha^{mk}_{i,j+s}(k)B_{j,i}(k) \\
=&~ \sum_{i=0}^{m(n-k)}\sum_{j=0}^{m(n-k)-s} \alpha^{mk-s}_{i,j}(k)B_{j,i}(k).
\end{align*}
Again, notice here that if $j>m(n-k)-s$, then $mk-(mk-s)+j=s+j>m(n-k)$, 
and Lemma~\ref{lemma; Aip} implies that $\alpha^{mk-s}_{i,j}(k)=0$ in this case. Hence
\begin{align*}
\gamma'_{mk-s}
=&~ \sum_{i=0}^{m(n-k)}\sum_{j=0}^{m(n-k)} \alpha^{mk-s}_{i,j}(k)B_{j,i}(k) \\
=&~ \tr(\alpha^{mk-s}(k)B(k)) \\
=&~ \gamma_{mk-s}.  
\end{align*}
\end{enumerate} 
This shows that $\gamma'_l=\gamma_l$ for all $l$ from $0$ to $N$, hence concludes the proof of the proposition, which leads to Theorem~\ref{thm; cmalgorithm}.
\end{proof}
\begin{remark} 
In the expression of $\calH(k)$, it seems that we are also involving the negative powers of $H$. 
In fact, however, for $l<0$, if $p\geq i$, we have $mk-l+p-i>mk$, then Lemma~\ref{lemma; Aip} shows that $A_{i,p}^l(k)=0$; if $p<i$, we have $B_{p,i}(k)=0$. In either case
\[
\gamma_l=\gamma'_l=\sum_{mk+j-p=l} \alpha^{mk}_{i,p}(k)B_{j,i}(k)=0 .
\]
Hence the negative powers of $H$ do not appear the expression.
\end{remark}

\section{Projectivized Conormal Cycle and Characteristic Cycle of $\tau_{m,n,k}$.}
\label{S; projconormal}
In \cite{Kennedy}, Kennedy generalized MacPherson's theorem (Theorem~\ref{thm; macc_*}) from $\bbC$ to arbitrary algebraically closed field of characteristic $0$. The key ingredients in the paper are the conormal and Lagrangian cycles. Here we review some basic definitions.
Let $X\subset M$ be a $d$-dimensional subvariety of $m$-dimensional smooth ambient space. The \textit{conormal space} of $X$ is defined as a dimension $m$ subvariety of $T^*M$
\[
T^*_X M:=\overline{\{(x,\lambda)|x\in X_{sm};\lambda(T_xX)=0\}}\subset T^*M
\]
We call the class $[T^*_X M]$ the \textit{Conormal cycle of $X$}.
Let $L(M)$ be the free abelian group generated by the conormal spaces $T_V^*M$ for subvarieties $V\subset M$. As shown in \cite{Kennedy}, the group $L(M)$ is isomorphic to the group of constructible functions $F(M)$ by mapping $Eu_V$ to $(-1)^{\dim V} T^*_V M$. 
We call an element of $L(M)$ a \textit{(conical) Lagrangian cycle} of $M$. We say a Lagrangian cycle is \textit{irreducible} if it equals the conormal space of some subvariety $V$. The name Lagrangian comes from \cite[Lemma 3]{Kennedy}, which says that the conormal spaces of closed subvarieties $V\subset M$ are exactly the conical Lagrangian varieties of $T^*M$. 

We define the \textit{projectivized conormal cycle} of $X$ to be $Con(X):=[\bbP(T^*_X M)]$, which is a $m-1$-dimensional cycle in the total space $\bbP(T^*M)$ . There is a group morphism $ch\colon F(M)\to A_{m-1}(\bbP(T^*M))$ sending $Eu_V$ to $(-1)^{\dim V} Con(V)$. The cycle $ch(\ind_X)$ is called the \textit{`Characteristic Cycle of $X$'}, and denoted by $Ch(X)$. As pointed in \cite[\S 4]{MR2097164}, the classes $ch(\varphi)$ and $c_*(\varphi)$ are related by `casting the shadow ' process, which we now explain. 

Let $E\to M$ be a rank $e+1$ vector bundle. For a projective bundle $p\colon \bbP(E)\to M$, the structure theorem \cite[\S 3.3]{INT} shows that any $C\in A_r(\bbP(E))$ can be uniquely written as $C=\sum_{i=0}^{e} c_1(\calO(1))^i\cap p^*(C_{r+i-e})$. 
Here $\calO(1)$ is the tautological line bundle on $\bbP(E)$, and $C_{r+i-e}\in A_{r+i-e}(M)$ are classes in $M$. 
We define the class $C_{r-e}+C_{r-e+1}+\cdots + C_r \in A_*(M)$ to be the \textit{Shadow} of $C$. 

Also, for any constructible function $\varphi\in F(M)$, we defined the signed class $\breve{c}_*(\varphi)\in A_*(M)$ to be
$\{\breve{c}_*(\varphi)\}_r = (-1)^r\{c_*(\varphi)\}_r $. 
Here for any class $C\in A_*(M)$, $C_r$ denotes the $r$-dimensional piece of $C$. 

The following lemma (\cite[Lemma 4.3]{MR2097164}) shows that the Lagrangian cycle $ch(\varphi)$ and  the class $c_*(\varphi)$ are essentially equivalent. 
\begin{lemma}
The class $\breve{c}_*(\varphi)$ is the shadow of the characteristic cycle $ch(\varphi)$.
\end{lemma}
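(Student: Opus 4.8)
The plan is to exploit the additivity of every operation in the statement: both $ch$ and $c_*$ are group homomorphisms on $F(M)$, and ``casting the shadow'' is linear on $A_*(\bbP(T^*M))$. Since the classes $Eu_V$, as $V$ ranges over the subvarieties of $M$, form a basis of $F(M)$ (this is Kennedy's isomorphism $L\cong F$ recalled above), it suffices to verify the identity for $\varphi=Eu_V$. By definition $ch(Eu_V)=(-1)^{\dim V}Con(V)=(-1)^{\dim V}[\bbP(T^*_V M)]$ while $c_*(Eu_V)=c_M(V)$, so the goal reduces to showing that the shadow of $(-1)^{\dim V}[\bbP(T^*_V M)]$ has $r$-dimensional piece equal to $(-1)^r\{c_M(V)\}_r$ for every $r$.

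Write $m=\dim M$ and $d=\dim V$, and let $\sigma\colon\wt V\to V$ be the Nash blow-up with Nash tangent bundle $\wt T\subset\sigma^*\calT_M$ of rank $d$, so that $c_M(V)=\sigma_*(c(\wt T)\cap[\wt V])$. The geometric heart of the argument is an incidence correspondence tying the projectivized conormal variety to this blow-up. I would let $\calN\subset\sigma^*\calT_M^\vee$ be the annihilator of $\wt T$ (the rank $m-d$ conormal bundle of the Nash tangent bundle), set $W:=\bbP(\calN)$ with projection $q\colon W\to\wt V$, and let $\pi\colon W\to\bbP(T^*M)$ be induced by $\calN\subset\sigma^*\calT_M^\vee$. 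Over $V_{sm}$ the Nash blow-up is an isomorphism and $\calN$ is the honest conormal bundle, so $\pi$ restricts to an isomorphism onto $\bbP(T^*_{V_{sm}}M)$; since $W$ is irreducible of dimension $m-1$ and $\pi$ is proper, $\pi$ then maps $W$ birationally onto $\bbP(T^*_V M)$. Moreover $\pi^*\calO(1)=\calO_W(1)$, because the tautological subbundle of $W=\bbP(\calN)$ is the restriction of that of $\bbP(T^*M)$.

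Next I would read off the shadow components through $p\colon\bbP(T^*M)\to M$. Using $\pi_*[W]=[\bbP(T^*_V M)]$, the projection formula, and the fact that $q$ is a projective bundle with the Segre convention of \cite[\S3.1]{INT}, I obtain
\[
p_*\bigl(c_1(\calO(1))^{m-1-i}\cap[\bbP(T^*_V M)]\bigr)=\sigma_*\bigl(s_{d-i}(\calN)\cap[\wt V]\bigr).
\]
On the other hand, writing the shadow decomposition $[\bbP(T^*_V M)]=\sum_j c_1(\calO(1))^j\cap p^*C_j$ with $C_j\in A_j(M)$, the same pushforward equals $\sum_{j\ge i}s_{j-i}(\calT_M^\vee)\cap C_j$. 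Inverting this triangular Segre system with $c(\calT_M^\vee)s(\calT_M^\vee)=1$ yields $C_i=\sum_k c_k(\calT_M^\vee)\cap\sigma_*\bigl(s_{d-i-k}(\calN)\cap[\wt V]\bigr)$, and the projection formula together with the dual conormal sequence $0\to\calN\to\sigma^*\calT_M^\vee\to\wt T^\vee\to0$, which gives $c(\sigma^*\calT_M^\vee)\,s(\calN)=c(\wt T^\vee)$, collapses everything to
\[
C_i=\sigma_*\bigl(c_{d-i}(\wt T^\vee)\cap[\wt V]\bigr)=(-1)^{d-i}\{c_M(V)\}_i.
\]
Summing over $i$ and restoring the global factor $(-1)^{\dim V}=(-1)^d$ from $ch(Eu_V)$ turns $(-1)^{d-i}$ into $(-1)^i$, which is precisely the defining twist of $\breve c_*$, so the shadow of $ch(Eu_V)$ equals $\breve c_*(Eu_V)$ and additivity finishes the proof.

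The main obstacle I anticipate lies in the second paragraph: identifying $T^*_V M$, defined as the closure of the conormal directions over $V_{sm}$, with the image of $\bbP(\calN)$ requires knowing that the Nash blow-up records exactly the limiting tangent planes, hence all limiting conormal directions, and that no spurious component coming from a higher-codimensional stratum intervenes; together with the compatibility $\pi^*\calO(1)=\calO_W(1)$, this is the geometric input on which the birationality rests. Once those are secured, the remaining steps are the formal Segre-class bookkeeping sketched above, where the only genuine care needed is the consistent use of the dual bundle $\calT_M^\vee$ and the resulting sign $(-1)^{d-i}$.
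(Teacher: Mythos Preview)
Your argument is correct and is essentially the classical proof of this identity (via the Nash blow-up, the identification of the projectivized conormal variety with the image of $\bbP(\calN)$, and the Segre-class inversion). Note, however, that the paper does not supply its own proof of this lemma: it is quoted verbatim as \cite[Lemma~4.3]{MR2097164} and used as a black box. So rather than taking a different route from the paper, you have reconstructed the argument that the cited reference provides; the paper itself simply invokes that reference. Your caveat about the birationality of $\pi\colon\bbP(\calN)\to\bbP(T^*_VM)$ is well placed---that is exactly the geometric input (limits of tangent planes are captured by the Nash blow-up, and $\pi^*\calO(1)=\calO_W(1)$) on which the computation rests---and once that is granted the remaining Segre/Chern bookkeeping you wrote down, including the sign $(-1)^{d-i}$ coming from $c(\wt T^\vee)$, is accurate.
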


In particular, when $M=\bbP^N$ we have the following diagram
\[
\begin{tikzcd}
 \bbP(T^* \bbP^N) \arrow{r}{j} \arrow{d}{\pi} & \bbP^{N}\times\bbP^{N} \arrow{dl}{pr_1}\arrow{d}{pr_2} \\
M=\bbP^{N} & \bbP^{N}
\end{tikzcd} .
\]
Let $L_1,L_2$ are the pull backs of the line bundle $\calO(1)$ of $\bbP^N$ from projections $pr_1$ and $pr_2$. Then we have 
$\calO_P(1)=j^*(L_1\otimes L_2)$, and $j_*[\bbP(T^*_X)]=c_1(L_1\otimes L_2) \cap [\bbP^{N}\times\bbP^{N}]$ is a divisor in $\bbP^{N}\times\bbP^{N}$. 

For $i=1,2$, let $h_i=c_1(L_i)\cap [\bbP^{N}\times\bbP^{N}] $ be the pull backs of hyperplane classes. 
\begin{prop}
\label{prop; chc_*}
Let $\varphi$ be a constructible function on $\bbP^N$. Write $c_*(\varphi)=\sum_{l=0}^{N} \gamma_lH^{N-l}$ as a polynomial of $H$ in $A_*(\bbP^N)$. Then we have
\begin{align*}
ch(\varphi)=\sum_{j=1}^{N}\sum_{k=j-1}^{N-1} (-1)^k\gamma_k\binom{k+1}{j} h_1^{N+1-j}h_2^j \/.
\end{align*}
as a class in $\bbP^{N}\times\bbP^{N}$.
\end{prop}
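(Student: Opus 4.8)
The plan is to realize $ch(\varphi)$ through the projective bundle structure of $\bbP(T^*\bbP^N)$ and then push it forward through $j$ to $\bbP^N\times\bbP^N$, identifying $ch(\varphi)$ with $j_*ch(\varphi)$ as the proposition implicitly does. First I would record that $E=T^*\bbP^N$ has rank $N$, so $\bbP(T^*\bbP^N)=\bbP(E)$ with $e=N-1$, and that $ch(\varphi)$ is an $(N-1)$-dimensional cycle (being a combination of conormal cycles $Con(V)$, each of dimension $N-1$). By the structure theorem \cite[\S 3.3]{INT} I may write uniquely
\[
ch(\varphi)=\sum_{i=0}^{N-1} c_1(\calO_P(1))^i\cap \pi^*(C_i),\qquad C_i\in A_i(\bbP^N).
\]

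Next I would pin down the coefficients $C_i$. The shadow of $ch(\varphi)$ is $\sum_{i=0}^{N-1}C_i$, and by the shadow lemma \cite[Lemma 4.3]{MR2097164} this equals $\breve{c}_*(\varphi)$. Since $c_*(\varphi)=\sum_l\gamma_l H^{N-l}$ has $l$-dimensional piece $\gamma_l H^{N-l}$, the signed class is $\breve{c}_*(\varphi)=\sum_l (-1)^l\gamma_l H^{N-l}$; matching dimension by dimension gives $C_i=(-1)^i\gamma_i H^{N-i}$ for $0\le i\le N-1$. The top coefficient $\gamma_N$ does not enter, which is consistent with the shadow reaching only dimension $N-1$, and explains why the index $k$ in the claimed formula stops at $N-1$.

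The core computation is the pushforward $j_*$, where I would use the three identities supplied in the setup: $c_1(\calO_P(1))=j^*(h_1+h_2)$ (from $\calO_P(1)=j^*(L_1\otimes L_2)$), $\pi^*H=j^*h_1$ (from $\pi=pr_1\circ j$ and $\pi^*\calO_{\bbP^N}(1)=j^*L_1$), and $j_*[\bbP(T^*\bbP^N)]=(h_1+h_2)\cap[\bbP^N\times\bbP^N]$. Writing $\pi^*C_i=(-1)^i\gamma_i\,(j^*h_1)^{N-i}\cap[\bbP(T^*\bbP^N)]$ and applying the projection formula, each summand becomes
\[
j_*\big(c_1(\calO_P(1))^i\cap\pi^*C_i\big)=(-1)^i\gamma_i\,(h_1+h_2)^i h_1^{N-i}\cap j_*[\bbP(T^*\bbP^N)]=(-1)^i\gamma_i\,(h_1+h_2)^{i+1}h_1^{N-i}.
\]
The extra factor $(h_1+h_2)$, arising because the image of $j$ is the bidegree-$(1,1)$ incidence divisor, is the one bookkeeping point I would watch most carefully; it is the source of the shift from $k$ to $k+1$ in the binomial coefficient.

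Finally I would expand $(h_1+h_2)^{i+1}=\sum_a\binom{i+1}{a}h_1^{i+1-a}h_2^a$ inside $A_*(\bbP^N\times\bbP^N)=\bbZ[h_1,h_2]/(h_1^{N+1},h_2^{N+1})$, so that survival of $h_1^{N+1-a}h_2^a$ forces $1\le a\le N$ (and $a\le i+1\le N$ is automatic since $i\le N-1$). Renaming $i\mapsto k$ and $a\mapsto j$, then interchanging the order of summation so that for fixed $j$ the index $k$ runs from $j-1$ to $N-1$, yields exactly
\[
ch(\varphi)=\sum_{j=1}^{N}\sum_{k=j-1}^{N-1}(-1)^k\gamma_k\binom{k+1}{j}h_1^{N+1-j}h_2^j.
\]
The main obstacle is not a single hard idea but keeping the two pushforwards ($\pi^*$ for the bundle and the closed embedding $j_*$) together with the truncation relations $h_i^{N+1}=0$ straight; once the factor $(h_1+h_2)^{i+1}$ is correctly produced, the remainder is a routine binomial re-indexing.
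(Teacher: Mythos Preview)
Your proposal is correct and follows essentially the same route as the paper: identify the coefficients in the projective-bundle expansion of $ch(\varphi)$ via the shadow lemma, push forward through the closed embedding $j$ using the identities $\calO_P(1)=j^*(L_1\otimes L_2)$, $\pi=pr_1\circ j$, and $j_*[\bbP(T^*\bbP^N)]=(h_1+h_2)\cap[\bbP^N\times\bbP^N]$, then expand $(h_1+h_2)^{k+1}$ and re-index. Your write-up is in fact a bit more careful than the paper's in explaining why $\gamma_N$ drops out and in tracking the truncation $h_i^{N+1}=0$.
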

\begin{proof}
The lemma shows that $\breve{c}_*(\varphi)=\sum_{l=0}^{N} (-1)^l\gamma_l H^{N-l}$ is the shadow of $ch(\varphi)$. 
Then the structure theorem of the projective bundle $\bbP(T^* \bbP^N)$ shows that
\begin{align*}
ch(\varphi)=\sum_{k=0}^{N-1} (-1)^k\gamma_k c_1(\calO(1))^k\cap \pi^*([\bbP^k]) \/.
\end{align*}
Hence 
\begin{align*}
j_*ch(\varphi)
&= \sum_{k=0}^{N-1} (-1)^k\gamma_k j_*(c_1(\calO(1))^k\cap \pi^*([\bbP^k])) \\
&=\sum_{k=0}^{N-1} (-1)^k\gamma_k j_*(c_1(\calO(1))^k\cap \pi^*(H^{N-k}\cap [\bbP^N])) \\
&=\sum_{k=0}^{N-1} (-1)^k\gamma_k c_1(L_1\otimes L_2)^k\cap j_*\pi^*(H^{N-k}\cap [\bbP^N]) \\
&=\sum_{k=0}^{N-1} (-1)^k\gamma_k c_1(L_1\otimes L_2)^k c_1(L_1)^{N-k} \cap j_*[\bbP(T^*\bbP^N)]\\
&=\sum_{k=0}^{N-1} (-1)^k\gamma_k c_1(L_1\otimes L_2)^{k+1} c_1(L_1)^{N-k} \cap [\bbP^{N}\times\bbP^{N}]  \\
&=\sum_{k=0}^{N-1} (-1)^k\gamma_k (h_1+h_2)^{k+1} h_1^{N-k}  \\
&=\sum_{j=1}^{N}\sum_{k=j-1}^{N-1} (-1)^k\gamma_k\binom{k+1}{j} h_1^{N+1-j}h_2^j. \qedhere
\end{align*}
\end{proof}
 
Let $X\subset \bbP^N$ be a $d$-dimensional projective variety. We define the $k$-th polar class of $X$ to be:
\[
[M_k]:=\overline{\{x\in X_{sm} | \dim (T_x X_{sm}\cap L_k)\geq k-1  \}} \/.
\]
Here $L_k\subset L^N$ is a linear subspace of codimension $d-k+2$. This rational equivalence class is independent of $L_k$ for general $L_k$. The polar classes are closely connected to the Chern-Mather class of $X$, as pointed out in \cite{Piene}:
\begin{thm}[Theorem 3~\cite{Piene}]
The $k$-th polar class of $X$ is given by
\[
[M_k]=\sum_{i=0}^k (-1)^i \binom{d-i+1}{d-k+1}  H^{k-i}\cap c^i_M(X).
\]
Here $c_M^i(X)$ is the codimension $i$ piece of the Chern-Mather class of $X$.
\end{thm}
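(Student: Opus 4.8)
The plan is to realize the polar locus as a degeneracy locus for the Gauss map and then invoke the Thom--Porteous formula, finally translating everything into Chern--Mather data via the Nash blow-up. First I would pass to the Nash blow-up $\nu\colon \hat X\to X$, on which the Gauss map extends to a morphism $\gamma\colon \hat X\to \mathbb G:=G(d+1,N+1)$ sending a smooth point to the $(d+1)$-dimensional linear span of its embedded projective tangent space. Writing $\mathcal U$ for the tautological rank $(d+1)$ subbundle on $\mathbb G$ and $\mathcal T$ for the Nash tangent bundle (rank $d$), the pullback $\gamma^*\mathcal U$ is the bundle of affine tangent spaces, and it fits into the fundamental exact sequence
\[
0\to \calO_{\hat X}(-1)\to \gamma^*\mathcal U\to \mathcal T\otimes\calO_{\hat X}(-1)\to 0,
\]
where $\calO(-1)$ is the restriction of the tautological bundle of $\bbP^N$; geometrically this records that the affine tangent space contains the cone line over the point. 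Twisting by $\calO(1)$ gives $0\to\calO\to\gamma^*\mathcal U\otimes\calO(1)\to\mathcal T\to 0$, hence $c(\gamma^*\mathcal U\otimes\calO(1))=c(\mathcal T)$. This identity is the bridge to the Chern--Mather class, through $\nu_*(c_i(\mathcal T)\cap[\hat X])=c_M^i(X)$.

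Next I would translate the defining condition of $M_k$ into a rank condition. A linear subspace $L_k$ of codimension $d-k+2$ has affine span $\hat L_k$ of vector dimension $N-d+k-1$, and $\dim(\mathbb T_xX\cap L_k)\ge k-1$ is equivalent to $\dim(\gamma^*\mathcal U\cap\hat L_k)\ge k$, i.e.\ to the composite $\gamma^*\mathcal U\hookrightarrow V\otimes\calO\twoheadrightarrow (V/\hat L_k)\otimes\calO$ failing to be surjective. Dualizing, $M_k$ is the locus where $(V/\hat L_k)^\vee\otimes\calO\to (\gamma^*\mathcal U)^\vee$ drops rank to at most $d-k+1$; here the source has rank $d-k+2$, so the expected codimension is $k$ and the Thom--Porteous determinant \cite[Ch.~14]{INT} collapses to a single class,
\[
[M_k]=c_k\big((\gamma^*\mathcal U)^\vee\big)\cap[\hat X],
\]
the trivial source $(V/\hat L_k)^\vee\otimes\calO$ contributing nothing to $c(-)$. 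This step is where the genericity of $L_k$ enters: one needs a Kleiman--Bertini transversality argument on the smooth variety $\hat X$ to guarantee that the degeneracy locus is reduced of the expected codimension $k$ and that its class is computed by Porteous.

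It then remains to expand $c_k((\gamma^*\mathcal U)^\vee)$. Since $(\gamma^*\mathcal U)^\vee=(\gamma^*\mathcal U\otimes\calO(1))^\vee\otimes\calO(1)$ and $c(\gamma^*\mathcal U\otimes\calO(1))=c(\mathcal T)$, the Chern-class twisting formula \cite[Example~3.2.2]{INT} applied to this rank $(d+1)$ bundle yields
\[
c_k\big((\gamma^*\mathcal U)^\vee\big)=\sum_{i=0}^k (-1)^i\binom{d+1-i}{k-i}H^{k-i}\,c_i(\mathcal T),
\]
the sign coming from $c_i(\,\cdot\,^\vee)=(-1)^ic_i$ and the binomial from twisting a bundle of rank $d+1$; rewriting $\binom{d+1-i}{k-i}=\binom{d-i+1}{d-k+1}$ and pushing forward by $\nu_*$ (using the projection formula for $H$) gives precisely the asserted formula. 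The main obstacle is not this routine computation but the second step: making rigorous that the Porteous class on $\hat X$, pushed to $X$, equals the polar class $[M_k]$ defined by closure from $X_{sm}$. Since $\nu$ is an isomorphism over $X_{sm}$ the two loci agree there, so the real point is to show that for general $L_k$ the closure acquires no spurious codimension-$k$ components supported over $\operatorname{Sing}(X)$; this follows from a dimension estimate on the exceptional fibers together with the genericity of $L_k$, and it is exactly the reason one works on the Nash blow-up rather than an arbitrary resolution, so that $c_i(\mathcal T)$ pushes forward to $c_M^i(X)$.
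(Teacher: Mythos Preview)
The paper does not supply a proof of this statement at all: it is quoted verbatim as Theorem~3 of Piene's paper~\cite{Piene} and immediately used to link polar degrees with the conormal cycle.  So there is no ``paper's own proof'' to compare against; the appropriate benchmark is Piene's original argument.

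Your outline is exactly that argument.  Piene works on the Nash blow-up, extends the Gauss map, identifies the (pullback of the) polar locus as the degeneracy locus of the map from the rank-$(d+1)$ bundle $\gamma^*\mathcal U$ of affine tangent spaces to the trivial bundle $V/\hat L_k$, and applies Porteous to obtain $c_k((\gamma^*\mathcal U)^\vee)\cap[\hat X]$.  The short exact sequence $0\to\calO(-1)\to\gamma^*\mathcal U\to\mathcal T\otimes\calO(-1)\to 0$ then converts Chern classes of $\gamma^*\mathcal U$ into those of the Nash tangent bundle $\mathcal T$, and pushing forward by $\nu$ produces $c_M^i(X)$.  Your expansion $c_k((\gamma^*\mathcal U)^\vee)=\sum_{i=0}^k(-1)^i\binom{d+1-i}{k-i}H^{k-i}c_i(\mathcal T)$ is correct, as is the identification $\binom{d+1-i}{k-i}=\binom{d-i+1}{d-k+1}$.

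The one point that deserves a bit more care is the equality $\nu_*[\widehat M_k]=[M_k]$, where $\widehat M_k$ is the Porteous locus on $\hat X$.  You flag this correctly.  The cleanest way to handle it is not a separate dimension estimate on exceptional fibers, but rather to apply Kleiman transversality directly on $\hat X$: for general $L_k$ the degeneracy locus $\widehat M_k$ is generically reduced of pure codimension $k$, and since $\nu$ is an isomorphism over $X_{sm}$ and $\widehat M_k$ has no component contained in $\nu^{-1}(\operatorname{Sing}X)$ (again by genericity and a dimension count on the bundle of flags), $\nu$ restricts to a birational map $\widehat M_k\to M_k$.  With that said, your sketch is complete and faithful to the source.
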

As recalled in \cite[Remark 2.7]{Aluffi}, the degrees of the polar classes are exactly the coefficients appearing in the projectivized conormal cycle. 
Let $X\subset \bbP^N$ be a $d$-dimensional subvariety. Let $\delta_l$ be the degree of the $l$th polar class $M_l$. Then the projectivized conormal cycle of $X$ is given by
\[
Con(X)=\sum_{l=0}^{d} \delta_{N-d+l} h_1^l h_2^{N+1-l}
\]
as a class in $\bbP^N\times \bbP^N$.
Hence the coefficients of the projectivized conormal cycle of $X$ (as a class in $\bbP^N\times \bbP^N$) are exactly the polar degrees of $X$.

For the determinantal variety $\tau_{m,n,k}$, the above discussion yields the
following.
\begin{prop}
\label{prop; contau}
Let $c_M(\tau_{m,n,k})=\sum_{l=0}^{mn-1} \beta_l H^{mn-1-l}$ be the Chern-Mather class of $\tau_{m,n,k}$ in $A_*(\bbP^{mn-1})$, as obtained in \eqref{eq; betacm}. Then the projectivized conormal cycle $Con(\tau_{m,n,k})$ equals:
\[
Con(\tau_{m,n,k})=  (-1)^{(m+k)(n-k)-1}\sum_{j=1}^{mn-2} \sum_{l=j-1}^{mn-2} (-1)^l\beta_l\binom{l+1}{j} h_1^{mn-j}h_2^{j}\cap [\bbP^{mn-1} \times \bbP^{mn-1}] \/.
\]
Via Corollary~\ref{thm; csmalgorithm}, the characteristic cycle of $\tau_{m,n,k}$ and $\tau_{m,n,k}^\circ$ are given by
\[
Ch(\tau_{m,n,k})=\sum_{i=0}^{n-1-k}(-1)^{i}\binom{k+i-1}{k-1} Con(\tau_{m,n,k+i}) ;
\]
and 
\[
Ch(\tau_{m,n,k}^\circ)=\sum_{i=0}^{n-1-k}(-1)^{i}\binom{k+i}{k} Con(\tau_{m,n,k+i}) \/.
\]
\end{prop}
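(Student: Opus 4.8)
The plan is to treat all three formulas as instances of the single dictionary provided by Proposition~\ref{prop; chc_*}, which turns a class $c_*(\varphi)=\sum_l \gamma_l H^{N-l}$ into the bihomogeneous class $ch(\varphi)$, combined with the two elementary facts $c_M(V)=c_*(Eu_V)$ and $ch(Eu_V)=(-1)^{\dim V}Con(V)$. The conormal formula will come from feeding the single function $Eu_{\tau_{m,n,k}}$ into this dictionary, while the two characteristic-cycle formulas will come from feeding in the constant functions $\ind_{\tau_{m,n,k}}$ and $\ind_{\tau_{m,n,k}^\circ}$ and exploiting that $ch$ is a group homomorphism.

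First I would establish the conormal cycle. Starting from $c_M(\tau_{m,n,k})=c_*(Eu_{\tau_{m,n,k}})=\sum_l\beta_l H^{mn-1-l}$, I apply Proposition~\ref{prop; chc_*} with $N=mn-1$ and $\gamma_l=\beta_l$, obtaining $ch(Eu_{\tau_{m,n,k}})=\sum_{j}\sum_{l}(-1)^l\beta_l\binom{l+1}{j}h_1^{mn-j}h_2^j$. Since $ch(Eu_{\tau_{m,n,k}})=(-1)^{\dim\tau_{m,n,k}}Con(\tau_{m,n,k})$ and $\dim\tau_{m,n,k}=(m+k)(n-k)-1$ by property~(3), solving for $Con(\tau_{m,n,k})$ produces exactly the stated overall sign $(-1)^{(m+k)(n-k)-1}$. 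To justify the summation range I would use that $c_M(\tau_{m,n,k})$ is supported in dimensions at most $\dim\tau_{m,n,k}$, so $\beta_l=0$ for $l>(m+k)(n-k)-1$; since $m\geq n$ and $k\geq 1$ give $(m+k)(n-k)\leq mn-1$, the nonzero terms are confined to $j\leq(m+k)(n-k)$, and the outer sum may be written with upper limit $mn-2$ (the extremal bidegree $j=mn-1$ only entering for the square determinantal hypersurface $\tau_{n,n,1}$).

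Next I would handle the two characteristic cycles by first recording the corresponding identities in $F(\bbP^{mn-1})$. Lemma~\ref{lemma; cmeu} gives $Eu_{\tau_{m,n,k}}=\sum_i\binom{k+i}{k}\ind_{\tau_{m,n,k+i}^\circ}$, so the change of basis between $\{\ind_{\tau_{m,n,k+i}^\circ}\}$ and $\{Eu_{\tau_{m,n,k+i}}\}$ is governed by precisely the upper-triangular binomial matrix inverted in the proof of Corollary~\ref{thm; csmalgorithm}. That inversion, read at the level of constructible functions, yields $\ind_{\tau_{m,n,k}}=\sum_i(-1)^i\binom{k+i-1}{k-1}Eu_{\tau_{m,n,k+i}}$ and $\ind_{\tau_{m,n,k}^\circ}=\sum_i(-1)^i\binom{k+i}{k}Eu_{\tau_{m,n,k+i}}$. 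Applying the homomorphism $ch$ termwise, and using $Ch(X)=ch(\ind_X)$ together with $ch(Eu_{\tau_{m,n,k+i}})=(-1)^{\dim\tau_{m,n,k+i}}Con(\tau_{m,n,k+i})$, delivers the two displayed formulas; the coefficients are forced to match those of Corollary~\ref{thm; csmalgorithm} because $ch$ and $c_*$ are parallel transformations out of the same group $F(\bbP^{mn-1})$, applied to the same two identities.

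The hard part will be the sign bookkeeping in the passage from $Eu$ to $Con$. Whereas $c_*(Eu_V)=c_M(V)$ is sign-free, the conormal translation carries the factor $(-1)^{\dim V}$, and the strata $\tau_{m,n,k+i}$ have dimensions $(m+k+i)(n-k-i)-1$ whose parities are controlled by $m-n+2(k+i)+1\equiv m-n+1\pmod 2$. I would verify that these $(-1)^{\dim}$ contributions assemble consistently into the stated sign-free coefficients $(-1)^i\binom{k+i-1}{k-1}$ and $(-1)^i\binom{k+i}{k}$ (equivalently, that the uniform normalization $ch(Eu_V)=(-1)^{\dim V}Con(V)$ is used throughout). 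Apart from this parity check, the argument is purely formal, resting on the linearity of $ch$ and on the matrix inversion already carried out for Corollary~\ref{thm; csmalgorithm}.
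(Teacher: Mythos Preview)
Your approach coincides with the paper's: no separate proof is given for Proposition~\ref{prop; contau}; it is meant to follow from Proposition~\ref{prop; chc_*} applied to $\varphi=Eu_{\tau_{m,n,k}}$, $\varphi=\ind_{\tau_{m,n,k}}$, and $\varphi=\ind_{\tau_{m,n,k}^\circ}$, together with the constructible-function identities underlying Corollary~\ref{thm; csmalgorithm}, exactly as you describe.

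Your caution about the sign bookkeeping is not only justified but decisive. Applying $ch$ to $\ind_{\tau_{m,n,k}}=\sum_i(-1)^i\binom{k+i-1}{k-1}Eu_{\tau_{m,n,k+i}}$ and using $ch(Eu_V)=(-1)^{\dim V}Con(V)$ gives
\[
Ch(\tau_{m,n,k})=\sum_{i=0}^{n-1-k}(-1)^{(m+k+i)(n-k-i)-1+i}\binom{k+i-1}{k-1}Con(\tau_{m,n,k+i})\,,
\]
and the exponent reduces mod~$2$ to $(m+k)(n-k)+i(m-n)-1$, which is \emph{not} congruent to $i$ in general. The paper itself records precisely this sign-corrected version later, in \S\ref{S; dual}. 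So the verification you propose would not confirm the two displayed characteristic-cycle formulas as stated but would instead surface the missing factor $(-1)^{\dim\tau_{m,n,k+i}}$; your method is correct, and the discrepancy lies in the statement. Your remark on the summation range (the bidegree $j=mn-1$ entering for $\tau_{n,n,1}$) is likewise a genuine boundary issue in the statement rather than a flaw in your argument.
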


For a projective variety $X$, the sum of its polar degrees is also a very interesting invariant. It is called the generic Euclidean distance degree of $X$, and denoted by $gED(X)$. We refer to \cite{MR3451425} for more details about Euclidean distance degree. For determinantal varieties, we have
\begin{prop}
\label{prop; ged}
The generic Euclidean distance degree of $\tau_{m,n,k}$ is given by
\[
gED(\tau_{m,n,k})=\sum_{l=0}^{(m+k)(n-k)-1}\sum_{i=0}^{l} (-1)^{i}\binom{(m+k)(n-k)-i}{(m+k)(n-k)-l}\beta_{(m+k)(n-k)-1-i}.
\]
where $\sum_{l=0}^{N} \beta_l[\bbP^l]$ is the Chern-Mather class of $\tau_{m,n,k}$ as obtained in \eqref{eq; betacm}.
\end{prop}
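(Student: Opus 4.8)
The plan is to derive the formula directly from Piene's expression of the polar classes in terms of the Chern-Mather class, combined with the definition of the generic Euclidean distance degree as the sum of the polar degrees. Throughout, write $d=\dim\tau_{m,n,k}=(m+k)(n-k)-1$ and $N=mn-1$, and recall from \eqref{eq; betacm} that $c_M(\tau_{m,n,k})=\sum_{l=0}^{N}\beta_l H^{N-l}$. Since $H^{N-l}\cap[\bbP^N]=[\bbP^l]$, the dimension $d-i$ (equivalently codimension $i$) piece of the Chern-Mather class is
\[
c^i_M(\tau_{m,n,k})=\beta_{d-i}\,[\bbP^{d-i}].
\]

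First I would recall that $gED(\tau_{m,n,k})=\sum_l\delta_l$, where the $\delta_l=\deg[M_l]$ are the polar degrees; these are exactly the coefficients of the projectivized conormal cycle $Con(\tau_{m,n,k})$ computed in Proposition~\ref{prop; contau}. Because $\tau_{m,n,k}$ has dimension $d$, there are precisely $d+1$ polar degrees, indexed by $l=0,1,\dots,d$ (for instance $\delta_0=\deg\tau_{m,n,k}=\beta_d$), and the generic Euclidean distance degree sums all of them, so that no term is omitted even when individual polar classes vanish.

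Next I would apply Theorem 3 of~\cite{Piene}, which gives
\[
[M_l]=\sum_{i=0}^{l}(-1)^i\binom{d-i+1}{d-l+1}H^{l-i}\cap c^i_M(\tau_{m,n,k}).
\]
Substituting $c^i_M(\tau_{m,n,k})=\beta_{d-i}[\bbP^{d-i}]$ and using $H^{l-i}\cap[\bbP^{d-i}]=[\bbP^{d-l}]$, each polar class becomes a multiple of $[\bbP^{d-l}]$; since $\deg[\bbP^{d-l}]=1$, its degree is
\[
\delta_l=\sum_{i=0}^{l}(-1)^i\binom{d-i+1}{d-l+1}\beta_{d-i}.
\]
Summing over $l=0,\dots,d$ and rewriting $\binom{d-i+1}{d-l+1}=\binom{(d+1)-i}{(d+1)-l}$ with $d+1=(m+k)(n-k)$ and $\beta_{d-i}=\beta_{(m+k)(n-k)-1-i}$ reproduces the asserted double sum exactly.

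The individual steps are routine; the only real care needed is bookkeeping. The main (minor) obstacle is keeping three indexing conventions straight: the codimension index $i$ in Piene's formula, the dimension index of the $[\bbP^{d-i}]$ pieces inside $\bbP^N$, and the coefficient convention in which $\beta_l$ is attached to $H^{N-l}$ rather than to a piece of fixed dimension of $\tau_{m,n,k}$. Once one verifies that $gED$ ranges over the full set $l=0,\dots,d$ of polar degrees (equivalently, all $d+1$ nonzero bidegree coefficients of $Con(\tau_{m,n,k})$), the combinatorial matching closes immediately and no further identity is required.
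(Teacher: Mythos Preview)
Your proof is correct and follows precisely the approach the paper sets up but leaves implicit: the paper does not give a separate proof of Proposition~\ref{prop; ged}, having already recorded Piene's formula for the polar classes and the definition of $gED(X)$ as the sum of the polar degrees just before the statement. Your argument simply assembles these two ingredients, and the bookkeeping---identifying $c_M^i(\tau_{m,n,k})=\beta_{d-i}[\bbP^{d-i}]$, reading off $\delta_l$ from Piene's formula, and summing over $l=0,\dots,d$ with $d+1=(m+k)(n-k)$---is carried out correctly.
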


For example, we list the generic Euclidean distance degree for some of the determinantal varieties here. 

\begin{tabular}{c | *{8}{c}}
\hline
  $n$  & $2$  & $3$ & $4$ & $5$ & $6$ & $7$ & $8$ & $9$   \\
\hline
$gED(\tau_{n,n,1})$ & 6 & 39 & 284 & 2205 & 17730 & 145635 & 1213560 & 10218105 \\
$gED(\tau_{n+1,n,1})$ & 10 & 83 & 676 & 5557 & 46222 & 388327 & 3288712 &28031657 \\
$gED(\tau_{n,n,n-1})$ & 6 & 39 & 284 & 2205 & 17730 & 145635 & 1213560 & 10218105 \\
\hline 
\end{tabular}

Notice that the first and the last sequence are the same, this is because that $\tau_{n,n,1}$ and $\tau_{n,n,n-1}$ are dual varieties, and their polar degrees are `flipped'. We will explain details in \S\ref{S; dual}. 

Moreover, the third sequence matches the sequence in \cite[Table 1]{Duco}, as the number of nonlinear normal modes for a fully resonant Hamiltonian system with $n$ degrees of freedom. The proof will be presented elsewhere, based on an explicit computation of the Euler characteristic of certain Hamiltonian hypersurfaces in $\bbP^n\times \bbP^n$.

\section{The Characteristic Cycle of the intersection cohomology sheaf of $\tau_{m,n,k}$}
\label{S; char}
For a stratified smooth complex variety $M$,
we can also assign Lagrangian cycles to constructible sheaves on it, in particular the intersection cohomology sheaf of a subvariety $V\subset M$. In this section we assume our base field to be $\bbC$, and prove that for the determinantal variety $\tau_{m,n,k}\subset \bbP^{mn-1}$, the Lagrangian cycle assigned to its intersection cohomology sheaf is irreducible, or equivalently, the microlocal multiplicities are all $0$ except for the top dimension piece.

\subsection{Characteristic Cycle of a Constructible Sheaf}
Let $M$ be a smooth  compact complex algebraic variety, and $\sqcup_{i\in I} S_i$ be a Whitney stratification of $M$. For any constructible sheaf $\calF^\bullet$ with respect to the stratification, one can assign a cycle in the cotangent bundle $T^*M$ to $\calF^\bullet$. This cycle is called the \textit{Characteristic Cycle} of $\calF^\bullet$, and is denoted by $\calCC(\calF^\bullet)$. The cycle can be expressed as a Lagrangian cycle
\[
\calCC(\calF^\bullet)=\sum_{i\in I} r_i(\calF^\bullet)[T^*_{\overline{S_i}}M] \/.
\]
Here the integer coefficients $r_i(\calF^\bullet)$ are called the \textit{Microlocal Multiplicities}, and are explicitly constructed in \cite[\S 4.1]{Dimca} using the $k$th Euler obstruction of pairs of strata and the stalk Euler characteristic $\chi_i(\calF^\bullet)$. For any $x\in S_i$, the stalk Euler characteristic $\chi_i(\calF^\bullet)$ are defined as:
\[
\chi_i(\calF^\bullet)=\sum_{p} (-1)^p\dim H^p(\calF^\bullet(x)).
\]

For the stratification  $\sqcup_{j\in I} S_j$ of $M$, we define 
\[
e(j,i)=Eu_{\overline{S_i}}(S_j):=Eu_{\overline{S_i}}(x); x\in S_j
\]
to be the local Euler obstruction along the $j$th stratum $S_j$ in the closure of $S_i$. If $S_j\not\subset \overline{S_i}$, then we define $e(j,i)=0$.

The following deep theorem from \cite[Theorem 3]{Du}, \cite[Theorem 6.3.1]{Ka} reveals the relation the microlocal multiplicities $r_i(\calF^\bullet)$, the stalk Euler characterictic $\chi_i(\calF^\bullet)$, and the local Euler obstructions $e(i,j)$ :
\begin{thm}[Microlocal Index Formula]
\label{thm; microlocal}
For any $j\in I$, we have the following formula:
\[
\chi_j(\calF^\bullet)=\sum_{i\in I}e(j,i)r_i(\calF^\bullet)  \/.
\]
\end{thm}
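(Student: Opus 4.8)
The plan is to read the identity $\chi_j(\calF^\bullet)=\sum_{i}e(j,i)r_i(\calF^\bullet)$ as an equality of constructible functions and to deduce it by comparing two additive invariants on the same Grothendieck group. Both invariants are additive on distinguished triangles: the stalkwise Euler characteristic $\calF^\bullet\mapsto\chi(\calF^\bullet)$ (with $\chi(\calF^\bullet)|_{S_j}=\chi_j(\calF^\bullet)$) because Euler characteristics are additive along long exact cohomology sequences, and $\calF^\bullet\mapsto\calCC(\calF^\bullet)$ because it is a homomorphism out of $K_0(D^b_c(M))$, with $r_i(\calF^\bullet)$ its coefficients. Thus $\chi$ descends to a map $K_0(D^b_c(M))\to F(M)$, which is an isomorphism, while $\calCC\colon K_0(D^b_c(M))\to L(M)$ is an isomorphism by Kashiwara's microlocal theory; Kennedy's isomorphism $F(M)\cong L(M)$ sends $Eu_V$ to $(-1)^{\dim V}[T^*_VM]$. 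Since $e(j,i)=Eu_{\overline{S_i}}(S_j)$, the asserted formula is exactly the statement that $\chi(\calF^\bullet)=\sum_i r_i(\calF^\bullet)\,Eu_{\overline{S_i}}$ as constructible functions, i.e. that $\chi$ and $\calCC$ are intertwined by Kennedy's isomorphism (once the per-component sign conventions are matched). As all three maps are additive, it suffices to verify this on a basis of $K_0(D^b_c(M))$.

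\textbf{Reduction to a single conormal.}
Because $\calCC$ is an isomorphism and $L(M)$ is free on the conormal cycles $\{[T^*_{\overline{S_i}}M]\}$, there is for each $i$ a unique class $[\mathcal{E}_i]\in K_0(D^b_c(M))$ with $\calCC(\mathcal{E}_i)=[T^*_{\overline{S_i}}M]$, that is $r_{i'}(\mathcal{E}_i)=\delta_{ii'}$; these $\{[\mathcal{E}_i]\}$ form a $\bbZ$-basis of $K_0(D^b_c(M))$. For such a generator the whole identity collapses to the single assertion $\chi(\mathcal{E}_i)=Eu_{\overline{S_i}}$, equivalently $\chi_j(\mathcal{E}_i)=e(j,i)$ for every $j$. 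So everything reduces to the following: the (unique up to the chosen sign) complex whose characteristic cycle is the single conormal $[T^*_{\overline{S_i}}M]$ has stalkwise Euler characteristic equal to the local Euler obstruction $Eu_{\overline{S_i}}$.

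\textbf{The crux via the Nash blowup.}
To prove this I would realize $\mathcal{E}_i$ geometrically. Let $\nu_i\colon\widetilde{\overline{S_i}}\to\overline{S_i}$ be the Nash blowup carrying the Nash tangent bundle. MacPherson's definition of $Eu_{\overline{S_i}}(x)$ as the obstruction integral over $\nu_i^{-1}(x)$ presents $Eu_{\overline{S_i}}$ as the stalkwise Euler characteristic of a complex built from the (shifted, oriented) constant sheaf on the Nash blowup pushed forward by $\nu_i$; this is the same Nash-theoretic input already used to define $c_M$ in the paper. I would then show that this pushforward complex has micro-support equal to the conormal variety of $\overline{S_i}$ with multiplicity one, the multiplicity being computed generically over the smooth locus $S_i$ where $Eu_{\overline{S_i}}=1$; hence its characteristic cycle is precisely $[T^*_{\overline{S_i}}M]$. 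By the uniqueness from the previous step this complex represents $\mathcal{E}_i$, and its stalkwise Euler characteristic is $Eu_{\overline{S_i}}$ by construction. Feeding this back through additivity yields $\chi(\calF^\bullet)=\sum_i r_i(\calF^\bullet)\,Eu_{\overline{S_i}}$, and evaluation on $S_j$ gives the theorem.

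\textbf{Main obstacle, alternative route, and bookkeeping.}
The hard part is the computation in the third paragraph: identifying the micro-support and the multiplicity of the Nash-blowup pushforward, i.e. showing that its characteristic cycle is a single conormal with coefficient one. This is the genuine analytic content of the Dubson--Kashiwara theorem. A self-contained alternative runs instead through Kashiwara's local index theorem, which computes the stalk Euler characteristic as a microlocal intersection number of $\calCC(\calF^\bullet)$ against generic normal Morse data, combined with the description of $Eu_{\overline{S_i}}$ as the corresponding normal Morse index (equivalently, the polar/obstruction multiplicity); there the delicate point is guaranteeing transversality and tameness of the test data against every stratum, which is where the Whitney conditions on $\sqcup_i S_i$ enter. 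Working over $\bbC$ keeps the multiplicities and signs uniform by complex orientation, so the only remaining bookkeeping is to match the factor $(-1)^{\dim V}$ in Kennedy's isomorphism against the sign convention chosen for $r_i$ and $\calCC$; a check on a smooth stratum, where $Eu_{\overline{S_i}}=\ind_{\overline{S_i}}$ and the conormal is smooth, pins this normalization down.
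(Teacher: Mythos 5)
The first thing to note is that the paper itself does not prove this statement: it is quoted as a ``deep theorem'' of Dubson and Kashiwara (\cite[Theorem 3]{Du}, \cite[Theorem 6.3.1]{Ka}), so your attempt must be judged against the actual content of that theorem. Your first two paragraphs are correct but purely formal: since both $\calF^\bullet\mapsto(\chi_j(\calF^\bullet))_j$ and $\calF^\bullet\mapsto\calCC(\calF^\bullet)$ factor through $K_0(D^b_c(M))$, and since the matrix $\bigl(e(j,i)\bigr)$ is triangular with $1$'s on the diagonal, the theorem is indeed equivalent to the single assertion that for each $i$ the class $\mathcal{E}_i$ with $\calCC(\mathcal{E}_i)=[T^*_{\overline{S_i}}M]$ satisfies $\chi_j(\mathcal{E}_i)=Eu_{\overline{S_i}}(S_j)$ for all $j$. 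All the content is therefore concentrated in your third paragraph, and that is where the argument breaks. The claim that MacPherson's definition presents $Eu_{\overline{S_i}}$ as the stalkwise Euler characteristic of the pushforward of the (shifted) constant sheaf on the Nash blowup is false. The stalk Euler characteristic of $R\nu_{i*}\bbC$ at $x$ is $\chi\bigl(\nu_i^{-1}(x)\bigr)$, and this is not the local Euler obstruction in general: for the cuspidal cubic $X=\{y^2z=x^3\}\subset\bbP^2$, the Gauss map extends over the cusp, so the Nash blowup is the normalization $\bbP^1$ and the Nash fiber over the cusp is a single point, giving $\chi(\nu^{-1}(x))=1$, whereas $Eu_X(x)$ equals the multiplicity, namely $2$. (The Gonzalez-Sprinberg--Verdier formula expresses $Eu_X(x)$ as the degree of a Chern/Segre class over the Nash fiber, not as its topological Euler characteristic.) For the same reason there is no basis for the assertion that this pushforward has irreducible characteristic cycle with multiplicity one; in general it does not. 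So the complex you exhibit is not $\mathcal{E}_i$, and the essential case remains unproven. Indeed, the class $\calCC^{-1}\bigl([T^*_{\overline{S_i}}M]\bigr)$ is in general only a virtual integer combination in $K_0$, not the class of any pushforward from the Nash blowup.

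Your fallback route --- Kashiwara's local index theorem computing stalk Euler characteristics as microlocal intersection numbers against normal Morse data, ``combined with the identification of $Eu_{\overline{S_i}}$ with the normal Morse index'' --- is not an alternative proof but a restatement of the theorem to be proved: that identification of the local Euler obstruction with the microlocal intersection number is precisely the Dubson--Kashiwara content. As written, the proposal correctly reduces the theorem to its essential case and then settles that case either by a false geometric claim or by circular appeal to the theorem itself. Closing the gap requires the genuine microlocal/stratified-Morse input (e.g.\ Dubson's argument, or an independent computation of $\calCC$ and of the stalks for a family of complexes realizing a basis of $K_0$), which is exactly why the paper cites this result rather than proving it.
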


This theorem suggests that if one knows about any two sets of the indexes, then one can compute the third one. 

\subsection{Intersection Cohomology Sheaf}\label{S; ICS}
For any subvariety $X\subset M$, Goresky and MacPherson defined a sheaf of bounded complexes $\calIC$ on $M$ in \cite{G-M2}. This sheaf is determined by the structure of $X$, and is usually called the \textit{Intersection Cohomology Sheaf} of $X$. For details about the intersection cohomology sheaf and intersection homology one is refereed to \cite{G-M1} and \cite{G-M2}. This sheaf is constructible with respect to any Whitney stratification on $M$, hence one can define its characteristic cycle $\calCC(\calIC)$.
We define $\calCC(\calIC)$ to be the \textit{Characteristic Cycle of the Intersection Cohomology Sheaf of $X$}, and we will call it the $IC$ characteristic cycle for short.

When $X$ admits a small resolution, the intersection cohomology sheaf $\calIC$ is derived from the constant sheaf on the small resolution.
\begin{thm}[Goresky, MacPherson {{\cite[\S 6.2]{G-M2}}}]
\label{thm; stalk}
Let $X$ be a $d$-dimensional irreducible complex algebraic variety. Let $p\colon Y\to X$ be a small resolution of $X$, then 
\[
\calIC\cong Rp_*\bbC_Y[2d] .
\]
In particular, for any point $x\in X$ the stalk Euler characteristic of $\calIC$ equals the Euler characteristic of the fiber:
\[
\chi_x (\calIC)=\chi (p^{-1}(x)).
\]
\end{thm}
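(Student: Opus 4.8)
The plan is to identify $Rp_*\bbC_Y[2d]$ with $\calIC$ by verifying the Deligne support and cosupport axioms, which single out $\calIC$ uniquely (up to isomorphism) in the bounded constructible derived category of $X$. First I would dispose of the normalization: a resolution is an isomorphism over the regular locus $X_{reg}$, so $(Rp_*\bbC_Y[2d])|_{X_{reg}}\cong\bbC_{X_{reg}}[2d]$ agrees with $\calIC|_{X_{reg}}$. Everything then reduces to the two perversity conditions, and these are where the smallness hypothesis must be used.

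The support condition is the heart of the matter and the step I expect to be the main obstacle. By proper base change, valid since $p$ is proper, the stalk cohomology is
\[
\mathcal{H}^{j}\bigl(Rp_*\bbC_Y[2d]\bigr)_x\cong H^{j+2d}\bigl(p^{-1}(x);\bbC\bigr).
\]
Since $H^{q}(p^{-1}(x);\bbC)=0$ for $q>2\dim p^{-1}(x)$, a point $x$ contributes to $\operatorname{supp}\mathcal{H}^{j}$ only when $\dim p^{-1}(x)\ge(j+2d)/2$. Stratifying $X$ by fiber dimension and feeding in the smallness inequality $\codim_X\{x:\dim p^{-1}(x)\ge i\}>2i$, I would convert this into the strict bound $\dim\operatorname{supp}\mathcal{H}^{j}(Rp_*\bbC_Y[2d])<-j-d$ for every $j>-2d$, which is precisely the middle-perversity support condition in this normalization. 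The crucial point is that the strict codimension inequality of a \emph{small} resolution (as opposed to a merely semismall one, which would give only the non-strict bound) is exactly what forces the strict support inequality; a semismall map yields only a perverse sheaf, in general a direct sum of intersection complexes of several strata rather than $\calIC$ alone.

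For the cosupport condition I would pass to the Verdier dual. Since $Y$ is smooth of complex dimension $d$ its dualizing complex is $\bbC_Y[2d]$, whence $\mathbb{D}_Y(\bbC_Y[2d])\cong\bbC_Y$; and because $p$ is proper, $\mathbb{D}_X$ commutes with $Rp_*$, giving $\mathbb{D}_X(Rp_*\bbC_Y[2d])\cong Rp_*\bbC_Y$. The cosupport condition for $Rp_*\bbC_Y[2d]$ is the support condition for this dual, and its stalks $\mathcal{H}^{j}(Rp_*\bbC_Y)_x\cong H^{j}(p^{-1}(x);\bbC)$ obey the identical fiber-dimension-plus-smallness estimate, so the bound follows by the same computation. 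With both axioms verified, the uniqueness clause gives $\calIC\cong Rp_*\bbC_Y[2d]$. The final assertion is then immediate from the same proper base change:
\[
\chi_x(\calIC)=\sum_{q}(-1)^{q}\dim\mathcal{H}^{q}\bigl(Rp_*\bbC_Y[2d]\bigr)_x=\sum_{q}(-1)^{q}\dim H^{q+2d}\bigl(p^{-1}(x);\bbC\bigr)=\chi\bigl(p^{-1}(x)\bigr),
\]
the even shift by $2d$ leaving the sign unaffected.
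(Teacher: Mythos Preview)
The paper does not prove this theorem; it is quoted verbatim from Goresky--MacPherson \cite[\S6.2]{G-M2} and used as a black box in \S\ref{S; charcycle}. So there is no ``paper's own proof'' to compare against.

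That said, your sketch is the standard argument and is correct in outline: one verifies Deligne's axioms for $Rp_*\bbC_Y[2d]$ using proper base change to identify stalk cohomology with fiber cohomology, then feeds the small-resolution inequality into the support estimate, and handles cosupport by Verdier duality (using that $Y$ is smooth and $p$ is proper). Your emphasis on the \emph{strict} codimension inequality is exactly right: this is what distinguishes small from semismall and forces the pushforward to be a single $IC$ sheaf rather than a direct sum over strata. The final Euler-characteristic assertion follows immediately from proper base change, as you wrote. One caution: the precise numerical form of the support bound depends on the indexing convention for $\calIC$ (Goresky--MacPherson work in real dimensions, BBD in complex dimensions with a different shift), so if you write this out in full you should fix a convention and check the inequalities carefully; the version you wrote, $\dim\operatorname{supp}\mathcal{H}^{j}<-j-d$, is correct in the BBD-style convention after the $[2d]$ shift but would look different in the original topological formulation.
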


\subsection{Characteristic Cycle of the Intersection Cohomology Sheaf of $\tau_{m,n,k}$} 
\label{S; charcycle}
In terms of the determinantal varieties $\tau_{m,n,k}$, Proposition~\ref{prop; small} shows that $\nu\colon \hat\tau_{m,n,k}\to \tau_{m,n,k}$ is a small resolution. Let $X:=\tau_{m,n,k}\subset \bbP^{mn-1}:=M$ be the embedding. Since $\bbP^{mn-1}=\tau_{m,n,0}$, we consider the Whitney stratification $\{S_i:=\tau_{m,n,i}^\circ| i=0,1,\cdots ,n-1\}$ of $\bbP^{mn-1}$. One has the following observations:
\begin{enumerate}
\item $\overline{S_i}=\tau_{m,n,i}$ ; 
\item $X=\tau_{m,n,k}=\overline{S_k}$ ; 
\item $S_i\subset \overline{S_j}$ if and only if $i\geq j$. 
\end{enumerate}

The main result of this section is the following theorem:
\begin{thm}
\label{thm; charcycle}
For $i=0,1,\cdots ,n-1$, let $r_i:=r_i(\calICT)$ be the microlocal multiplicities. Then
\[
r_i = 
\begin{cases} 
0, & \mbox{if } i\neq k \\ 
1, & \mbox{if } i=k
\end{cases} \/.
\] 
Hence we have:
\[
\calCC(\calICT)=[T^*_{\tau_{m,n,k}}\bbP^{mn-1}]
\]
is irreducible.
\end{thm}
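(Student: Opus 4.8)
The plan is to apply the Microlocal Index Formula (Theorem~\ref{thm; microlocal}) to the constructible sheaf $\calICT$, and to invert the resulting linear system. Since $\nu\colon \hat\tau_{m,n,k}\to\tau_{m,n,k}$ is a small resolution (Proposition~\ref{prop; small}), Theorem~\ref{thm; stalk} identifies $\calICT$ with $R\nu_*\bbC[2d]$ and tells us that for any point $\varphi$ the stalk Euler characteristic equals the Euler characteristic of the fiber of $\nu$. Concretely, for $\varphi\in S_j=\tau_{m,n,j}^\circ$ with $j\geq k$, the fiber is $\nu^{-1}(\varphi)\cong G(k,j)$, so
\[
\chi_j(\calICT)=\chi(G(k,j))=\binom{j}{k},
\]
while for $j<k$ the point lies outside $\tau_{m,n,k}$ and $\chi_j(\calICT)=0$. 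This supplies the left-hand side of the index formula for every stratum.

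Next I would assemble the matrix of local Euler obstructions $e(j,i)=Eu_{\tau_{m,n,i}}(S_j)$. By the observation that $S_j\subset\overline{S_i}=\tau_{m,n,i}$ exactly when $j\geq i$, this matrix is upper triangular (with respect to the ordering $i,j=k,\dots,n-1$), and Theorem~\ref{thm; eulerobstruction} evaluates its nonzero entries: for $j\geq i$ a point of $S_j=\tau_{m,n,j}^\circ$ lies in the stratum $\tau_{m,n,i+(j-i)}\smallsetminus\tau_{m,n,i+(j-i)+1}$ of $\tau_{m,n,i}$, so
\[
e(j,i)=Eu_{\tau_{m,n,i}}(S_j)=\binom{j}{i},
\]
and $e(j,i)=0$ for $j<i$. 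The Microlocal Index Formula then becomes, for each $j$,
\[
\binom{j}{k}=\sum_{i=k}^{n-1} \binom{j}{i}\, r_i,
\]
a triangular system in the unknowns $r_k,\dots,r_{n-1}$. The plan is to solve it by back-substitution starting from $j=k$: the $j=k$ equation forces $r_k=1$, and I would then verify that $r_i=0$ for all $i>k$ is the unique solution by induction on $j$, using the nonvanishing of the diagonal entries $\binom{j}{j}=1$. This inversion is essentially the same Pascal-type identity already exploited in the proof of Corollary~\ref{thm; csmalgorithm}.

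The only genuine subtlety, and the step I expect to require the most care, is justifying that the two Euler-characteristic computations plug into the index formula with exactly matching conventions — that is, checking that the sheaf $\calICT=R\nu_*\bbC[2d]$ really is constructible with respect to the stratification $\{\tau_{m,n,i}^\circ\}$ and that the microlocal multiplicity attached to the top stratum is normalized to $r_k=1$ rather than to a shift- or orientation-induced sign. Granting the framework of \S\ref{S; char}, the fiber Euler characteristics $\binom{j}{k}$ and the Euler obstructions $\binom{j}{i}$ both arise as counts of Schubert cells in Grassmannians, so the binomial identity closing the argument is forced; once the uniqueness of the triangular solution is established, the conclusion $\calCC(\calICT)=[T^*_{\tau_{m,n,k}}\bbP^{mn-1}]$, and hence its irreducibility, follows immediately.
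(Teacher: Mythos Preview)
Your proposal is correct and follows essentially the same route as the paper: compute the stalk Euler characteristics $\chi_j=\binom{j}{k}$ via the small resolution (Theorem~\ref{thm; stalk}), compute the Euler obstructions $e(j,i)=\binom{j}{i}$ via Theorem~\ref{thm; eulerobstruction}, feed both into the Microlocal Index Formula, and invert the resulting triangular binomial system. The only detail you have left implicit is the vanishing of $r_i$ for $i<k$: the theorem asserts $r_i=0$ for \emph{all} $i\neq k$, so you should either note that the equations $\chi_j=0$ for $j<k$ force $r_0=\cdots=r_{k-1}=0$ by the same triangular argument, or remark that the support of $\calICT$ lies in $\tau_{m,n,k}=\overline{S_k}$ so these coefficients vanish a priori---the paper opts for the former, writing out the full $n\times n$ system.
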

\begin{proof}
Let $\nu\colon \hat\tau_{m,n,k}\to \tau_{m,n,k}$ be the small resolution. For any $p\in S_j=\tau_{m,n,j}^\circ$, we have $\nu^{-1}(p)=G(k,j)$.
Theorem~\ref{thm; stalk} shows that for any $j=0,1,\cdots ,n-1$, any $p\in S_j$, the stalk Euler characteristic equals:
\[
\chi_j(\calICT)=\chi_{p}(\calICT)=\chi(\nu^{-1}(p))=\binom{j}{k} \/.
\]
And Theorem~\ref{thm; eulerobstruction} shows that:
\[
e(j,i)=Eu_{\overline{S_i}}(S_j)=Eu_{\tau_{m,n,i}}(\tau_{m.n.j}^\circ)=\binom{j}{i} \/.
\]
Here when $a<b$ we make the convention that $\binom{a}{b}=0$.

Hence the Microlocal Index Formula~\ref{thm; microlocal} becomes:
\[
\chi_j(\calICT)=\binom{j}{k}=\sum_{i\in I}e(j,i)r_i(\calICT)=\sum_{i\in I}\binom{j}{i}r_i \/.
\]
One gets the following linear equation:
\[
  \left[ {\begin{array}{c}
   \binom{0}{k} \\
   \binom{1}{k} \\
   \cdots       \\
   \binom{k}{k} \\
   \binom{k+1}{k} \\
   \cdots        \\
   \binom{n-1}{k} 
  \end{array} } 
  \right]
=
\left[ {\begin{array}{ccccccc}
   \binom{0}{0}    &          0   &          &           &         &    &    \\
   \binom{1}{0}    &\binom{1}{0}  &          &           &         &    &    \\
   \cdots          &     \cdots   &          &           &         &    &    \\
   \binom{k}{0}    &  \binom{k}{1}& \cdots   & \binom{k}{k} &         &    &  \\                
   \binom{k+1}{0}  & \binom{k+1}{1}   &   \cdots        &    \binom{k+1}{k} & \binom{k+1}{k+1}        &    &   \\
   \cdots          &      \cdots    &     \cdots      &     \cdots    &    \cdots    &  \cdots  & \\
   \binom{n-1}{0}  & \binom{n-1}{1}   &    \cdots      &   \cdots    & \cdots  &\cdots   & \binom{n-1}{n-1}\\
  \end{array} } 
  \right]   
\cdot
\left[ {\begin{array}{c}
   r_0 \\
   r_1 \\
   \cdots       \\
   r_k\\
   r_{k+1} \\
   \cdots        \\
   r_{n-1} 
  \end{array} } 
  \right]   \/.   
\]
Notice that the middle matrix is invertible, since the diagonals are all non-zero, hence the solution vector $[r_0,r_1,\cdots ,r_{n-1}]^T$ is unique. Hence 
\[
r_i = 
\begin{cases} 
0, & \mbox{if } i\neq k \\ 
1, & \mbox{if } i=k
\end{cases} 
\]  
is the unique solution to the system.
\end{proof}
This theorem shows that the projectivized conormal cycle $Con(\tau_{m,n,k})$ is exactly the $IC$ characteristic cycle $\calCC(\calICT)$ for the determinantal variety $\tau_{m,n,k}$.  As pointed out in the previous section, the Chern-Mather class $c_M(\tau_{m,n,k})$ is the `shadow' of  $Con(\tau_{m,n,k})$ . Hence our formula (Prop~\ref{prop; contau}) explicitly computes the $IC$ characteristic cycle $\calCC(\calICT)$ of determinantal varieties $\tau_{m,n,k}$. 

\begin{remark}
When the base field is $\bbC$, the lemma~\ref{lemma; cmeu} we proved in last section can actually be deduced from the following theorem.
\begin{thm*}[Theorem 3.3.1 \cite{Jones}]
Let $X$ be a subvariety of a smooth space $M$. Let $p\colon Y\to X$ be a small resolution of $X$. Assume that $\calCC(\calIC)$ is irreducible, i.e., 
$\calCC(\calIC)=[\overline{T_{X_{sm}}^*M}]$ is the conormal cycle of $\tau_{m,n,k}$.
Then 
\[
c_M(X)=p_*(c(T_Y)\cap [Y]) \/.
\]
\end{thm*}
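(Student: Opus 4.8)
The plan is to reduce the stated Chow-class identity to an equality of \emph{constructible functions}, namely $p_*\ind_Y = Eu_X$, and then to obtain the theorem by a single application of the naturality of $c_*$. First I would observe that since $Y$ is smooth, $c(\calT_Y)\cap[Y]=c_{SM}(\ind_Y)=c_*(\ind_Y)$, and since $p$ is proper (being a resolution), the naturality $Af\circ c_*=c_*\circ Ff$ applied to $f=p$ gives
\[
p_*\bigl(c(\calT_Y)\cap[Y]\bigr)=p_*\,c_*(\ind_Y)=c_*\bigl(p_*\ind_Y\bigr),
\]
where $p_*\ind_Y$ is the constructible function whose value at $x$ is $\chi(p^{-1}(x))$. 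Because $c_M(X)=c_*(Eu_X)$ by the very definition of the Chern--Mather class, the whole theorem will follow once I show $p_*\ind_Y=Eu_X$ in $F(M)$.

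Next I would fix a Whitney stratification $\{S_i\}_{i\in I}$ of $M$ adapted to $X$, so that $X=\overline{S_{i_0}}$ for the open dense stratum $S_{i_0}$ of $X_{sm}$, and invoke Theorem~\ref{thm; stalk}: because $p$ is a \emph{small} resolution, $\calIC\cong Rp_*\bbC_Y[2d]$, and hence for $x\in S_j$ the stalk Euler characteristic satisfies $\chi_j(\calIC)=\chi(p^{-1}(x))$ (the even shift $[2d]$ does not affect the sign). Therefore the constructible function $p_*\ind_Y$ is exactly the stalk Euler characteristic function $\chi_{\calIC}$, and the problem becomes identifying $\chi_{\calIC}$ with $Eu_X$.

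Then I would bring in the irreducibility hypothesis. Writing $\calCC(\calIC)=\sum_{i}r_i(\calIC)[T^*_{\overline{S_i}}M]$ and comparing with the assumption $\calCC(\calIC)=[T^*_XM]=[T^*_{\overline{S_{i_0}}}M]$, the linear independence of the distinct irreducible conormal cycles forces $r_{i_0}(\calIC)=1$ and $r_i(\calIC)=0$ for all $i\neq i_0$. Substituting these multiplicities into the Microlocal Index Formula (Theorem~\ref{thm; microlocal}) collapses the sum to a single surviving term: for every stratum $S_j$,
\[
\chi_j(\calIC)=\sum_{i\in I}e(j,i)\,r_i(\calIC)=e(j,i_0)=Eu_{\overline{S_{i_0}}}(S_j)=Eu_X(S_j).
\]
Combining this with the previous paragraph yields $p_*\ind_Y=\chi_{\calIC}=Eu_X$ as constructible functions on $M$, and applying $c_*$ finishes the proof exactly as in the special case treated in Lemma~\ref{lemma; cmeu}.

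The substantive point, and the step deserving the most care, is the third one: the identity $\chi_{\calIC}=Eu_X$ is false in general, and it is \emph{precisely} the irreducibility of $\calCC(\calIC)$ that converts the sheaf-theoretic stalk Euler characteristics into the local Euler obstruction. One must therefore verify that the hypothesis, phrased as an equality of Lagrangian cycles, genuinely pins down \emph{all} of the microlocal multiplicities (not merely the leading one) and that the index matrix $[e(j,i)]$ is invoked with the orientation of Theorem~\ref{thm; microlocal} so that the $i=i_0$ term reappears as $Eu_X$ without a spurious sign. Once the dictionary $Eu_V\leftrightarrow(-1)^{\dim V}[T^*_VM]$ is tracked against the sign conventions built into the index formula, everything else is a formal consequence of the functoriality of $c_*$ and of Theorem~\ref{thm; stalk}.
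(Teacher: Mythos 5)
Your proof is correct, and the first thing to note is that the paper never actually proves this statement itself: it is quoted from \cite{Jones} inside a remark, so there is no in-paper argument to compare line by line. What the paper does contain are exactly the two ingredients you assemble, but used in the opposite direction. In Lemma~\ref{lemma; cmeu}, the constructible-function identity $\nu_*\ind_{\hat\tau_{m,n,k}}=Eu_{\tau_{m,n,k}}$ is obtained directly from the explicit Euler-obstruction formula (Theorem~\ref{thm; eulerobstruction}), and then $c_*$-functoriality is applied exactly as in your first step; no irreducibility is needed there. In Theorem~\ref{thm; charcycle}, the microlocal index formula is fed the known quantities $\chi_j(\calIC)=\binom{j}{k}$ (via Theorem~\ref{thm; stalk}) and $e(j,i)=\binom{j}{i}$, and the multiplicities $r_i$ are solved for, yielding irreducibility. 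Your argument runs this machine backwards: the hypothesis $\calCC(\calIC)=[\overline{T^*_{X_{sm}}M}]$ pins down $r_{i_0}=1$ and $r_i=0$ for $i\neq i_0$ --- legitimately so, since the group of Lagrangian cycles is by definition free on the conormal cycles, so coefficients are determined --- and then Theorem~\ref{thm; microlocal} together with Theorem~\ref{thm; stalk} gives $\chi(p^{-1}(x))=\chi_j(\calIC)=e(j,i_0)=Eu_X(x)$ for $x\in S_j$, i.e.\ $p_*\ind_Y=Eu_X$, with no prior knowledge of the Euler obstructions; applying $c_*$ finishes. This is precisely what the general statement requires, and it is why the paper's remark can assert that Lemma~\ref{lemma; cmeu} ``can actually be deduced from'' the quoted theorem: your proof is that deduction carried out in general. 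Your sign bookkeeping is consistent with the paper's conventions, since the paper's own application of the index formula in Theorem~\ref{thm; charcycle} uses the same orientation $\chi_j=\sum_i e(j,i)r_i$ with $e(j,i)=Eu_{\overline{S_i}}(S_j)$; the only hypotheses you tacitly inherit from the paper's framework are $K=\bbC$ (needed for $\calIC$ and the index formula) and the properness/compactness under which $c_*$-functoriality is stated.
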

As we have seen, the Tjurina transform $\hat\tau_{m,n,k}$ is a small resolution of $\tau_{m,n,k}$, and the $IC$ characteristic cycle of  $\tau_{m,n,k}$ is indeed irreducible. 

This theorem shows that under this assumption we can use any small resolution and its tangent bundle to compute the Chern-Mather class. However, as pointed out in \cite[Rmk 3.2.2]{Jones}, this is a rather unusual phenomenon. It is known to be true for Schubert varieties in a Grassmannian, for certain Schubert varieties in flag manifolds of types B, C, and D, and for theta divisors of Jacobians (cf. \cite{MR1642745}). 
By the previous section, determinantal varieties also share this 
property.
\end{remark}

\section{Examples and Observations}
\label{example}
Let $K$ be an algebraically closed field of characteristic $0$. In this section we will present some examples for determinantal varieties $\tau_{m,n,k}$ and $\tau_{m,n,k}^{\circ}$ over $K$.

They are computed by applying Theorem \ref{thm; cmalgorithm} and Corollary \ref{thm; csmalgorithm}, using the package
Schubert2 in Macaulay2~\cite{M2}. In the tables that follow, we give the coefficients of $[\bbP^i]$ in the polynomial expression of $c_{SM}(\tau_{m,n,k})$ in $A_*(\bbP^{mn-1})$.

\subsection{Chern-Mather classes}
Here we present some examples of $c_M(\tau_{m,n,k})$ for small $m$ and $n$. 
\begin{enumerate}
\item $\tau_{3,3,s}$ \\
\begin{tabular}{c | ccccccccc}
  Table  & $\bbP^{0}$ & $\bbP^{1}$  & $\bbP^{2}$  & $\bbP^{3}$  & $\bbP^{4} $  & $\bbP^{5} $ & $\bbP^{6}$  & $\bbP^{7}$ & $\bbP^{8}$ \\
\hline
$c_M(\tau_{3,3,0})$ & 9  & 36 & 84  & 126 & 126  & 84 & 36 & 9 & 1 \\
$c_M(\tau_{3,3,1})$ & 18 & 54 & 102 & 126 & 102  & 54 & 18 & 3 & 0 \\
$c_M(\tau_{3,3,2})$ & 9  & 18 & 24  & 18  & 6    &  0 &  0 & 0 & 0\\
\end{tabular}

\item $\tau_{4,3,s}$. \\
\begin{small}
\begin{tabular}{c | *{12}{c}}
  Table  & $\bbP^0$ & $\bbP^1$ & $\bbP^2$ & $\bbP^3$ & $\bbP^4$ & $\bbP^5$ & $\bbP^6$ & $\bbP^7$ &$\bbP^8$ & $\bbP^9$ & $\bbP^{10}$ & $\bbP^{11}$  \\
\hline
$c_M(\tau_{4,3,0})$       & 12  & 66 & 220 & 495 & 792 & 924  & 792 & 495 & 220& 66 & 12& 1 \\
$c_M(\tau_{4,3,1})$       & 24  & 96 & 248 & 444 & 564 & 514  & 336 & 153 & 44 & 6  & 0 & 0  \\
$c_M(\tau_{4,3,2})$       & 12  & 30 & 52  & 57  & 36  & 10   &  0  & 0   & 0  & 0  & 0 & 0  \\
\end{tabular}
\end{small}
\item $\tau_{4,4,s}$ \\
\begin{tabular}{c | *{8}{c}}
  Table  & $\bbP^{0}$ & $\bbP^{1}$  & $\bbP^{2}$  & $\bbP^{3}$  & $\bbP^{4} $  & $\bbP^{5} $ & $\bbP^{6}$  & $\bbP^{7}$  \\
\hline
$c_M(\tau_{4,4,1})$ &48 &288 &1128 &3168 &6672 & 10816 & 13716 & 13716    \\
$c_M(\tau_{4,4,2})$ &48 &216 &672 &1524 &2592 &3368 &3376  &2602     \\
$c_M(\tau_{4,4,3})$ &16 &48  &104 &152  &144  &80   & 20   & 0       \\
\end{tabular}
\vskip .05in
\begin{tabular}{c | *{8}{c}}
  Table  & $\bbP^{8}$  & $\bbP^{9}$ & $\bbP^{10}$ & $\bbP^{11}$ & $\bbP^{12}$ & $\bbP^{13}$ & $\bbP^{14}$ & $\bbP^{15}$   \\
\hline
$c_M(\tau_{4,4,1})$ &10816 &6672 &3168 &1128 &288 &48  & 4 & 0 \\
$c_M(\tau_{4,4,2})$ &1504 &616   &160  & 20  & 0  & 0  & 0 & 0  \\
$c_M(\tau_{4,4,3})$ & 0   & 0   & 0   & 0   & 0  & 0  & 0 & 0  \\
\end{tabular}
\end{enumerate}

These examples suggest that the coefficients appearing in the $c_M$ classes of
$\tau_{m,n,k}$ are non-negative. We conjecture that these
classes are effective (see Conjecture \ref{conjecture; effective}).

\subsection{Chern-Schwartz-MacPherson Class}
\subsubsection{Chern-Fulton class and Milnor Class}
Recall that $\tau_{n,n,1}$ is a degree $n$ hypersurface in $\bbP^{n^2-1}$ cut by the determinantal polynomial. Hence the normal bundle $\calN_{\tau_{n,n,1}}\bbP^{n^2-1}$ equals $\calO(n)|_{\tau_{n,n,1}}$, and 
\[
c_F(\tau_{n,n,1})= \frac{(1+H)^{n^2}\cdot nH}{1+nH}=nH(1+H)^{n^2}(\sum_i(-nH)^i)
\]
is the class of the virtual tangent bundle (cf.~\cite[Example 4.2.6(a)]{INT}) of $\tau_{n,n,1}$. This class is often called the
``Chern-Fulton class'', denoted by $c_F$. The Chern-Fulton class agrees with the
Chern-Schwartz-MacPherson class for nonsingular varieties, but differs from it
in general. The signed difference between the Chern-Fulton
class and the Chern-Schwartz-MacPherson class is called \textit{Milnor Class} 
(cf.~\cite{PP3}). 

\begin{ex}
When $m=n=3$, $k=1$, our algorithm computes the $c_{SM}$ class of $\tau_{3,3,1}$ as
\[
c_{SM}(\tau_{3,3,1}) = 3H+18H^2+54H^3+96H^4+108H^5+78H^6+36H^7+9H^8.
\]

Also, the Chern-Fulton class of $\tau_{3,3,1}$ equals
\begin{align*}
c_F(\tau_{3,3,1}) 
&= \frac{3H(1+H)^9}{1+3H} \\
&=3H+18H^2+54H^3+90H^4+108H^5+54H^6+90H^7-162H^8.
\end{align*}
Hence the Milnor class of $\tau_{3,3,1}$ is
\begin{align*}
\mathcal M(\tau_{3,3,1}) 
&= (-1)^7(c_F(\tau_{3,3,1})-c_{SM}(\tau_{3,3,1})) \\
&= 6H^4 + 24 H^6-54H^7+171H^8.
\end{align*}
This class is supported on $\tau_{3,3,2}$, but note that it is not a multiple of the $c_{SM}$
class of $\tau_{3,3,2}$.
\end{ex}

Now we show more examples about the Chern-Schwartz-MacPherson classes of $\tau_{m,n,k}$ and $\tau_{m,n,k}^\circ$
\subsubsection{$m=n=3$} 
\begin{small}
\begin{enumerate}
\item $\tau_{3,3,k}$ \\
\begin{tabular}{c | ccccccccc}
  Table  & $\bbP^{0}$ & $\bbP^{1}$  & $\bbP^{2}$  & $\bbP^{3}$  & $\bbP^{4} $  & $\bbP^{5} $ & $\bbP^{6}$  & $\bbP^{7}$ & $\bbP^{8}$ \\
\hline
$c_{SM}(\tau_{3,3,0})$ & 9 & 36 & 84 & 126 & 126 & 84 & 36 & 9 & 1 \\
$c_{SM}(\tau_{3,3,1})$ & 9 & 36 & 78 & 108 & 96  & 54 & 18 & 3 & 0 \\
$c_{SM}(\tau_{3,3,2})$ & 9 & 18 & 24 & 18  & 6   &  0 &  0 & 0 & 0\\
\end{tabular} \\
\vskip .2in
\item $\tau_{3,3,k}^\circ$ \\ 
\begin{tabular}{c | ccccccccc}
  Table  & $\bbP^{0}$ & $\bbP^{1}$  & $\bbP^{2}$  & $\bbP^{3}$  & $\bbP^{4} $  & $\bbP^{5} $ & $\bbP^{6}$  & $\bbP^{7}$ & $\bbP^{8}$ \\
\hline
$c_{SM}(\tau_{3,3,0}^{\circ})$ & 0 & 0  & 6  & 18 & 30 & 30 & 18 & 6 & 1\\
$c_{SM}(\tau_{3,3,1}^{\circ})$ & 0 & 18 & 54 & 90 & 90 & 54 & 18 & 3 & 0\\
$c_{SM}(\tau_{3,3,2}^{\circ})$ & 9 & 18 & 24 & 18 & 6  & 0  &  0 & 0 & 0\\
\end{tabular} \\
\end{enumerate}
\end{small}

\subsubsection{$m=n=4$} 
\begin{enumerate}
\item $\tau_{4,4,k}$ \\
\begin{tabular}{c | *{8}{c}}
  Table  & $\bbP^{0}$ & $\bbP^{1}$  & $\bbP^{2}$  & $\bbP^{3}$  & $\bbP^{4} $  & $\bbP^{5} $ & $\bbP^{6}$  & $\bbP^{7}$ \\
\hline
$c_{SM}(\tau_{4,4,0})$ &16 &120 &560 &1820 &4368 &8008 &11440 &12870 \\
$c_{SM}(\tau_{4,4,1})$ &16 &120 &560 &1796 &4224 &7528 &10360 &11114  \\
$c_{SM}(\tau_{4,4,2})$ &16 &120 &464 &1220 &2304 &3208 &3336  &2602   \\
$c_{SM}(\tau_{4,4,3})$ &16 &48  &104 &152  &144  &80   & 20   & 0     \\
\end{tabular} \\
\vskip .1in
\begin{tabular}{c | *{8}{c}}
  Table  & $\bbP^{8}$ & $\bbP^{9}$ & $\bbP^{10}$ & $\bbP^{11}$ & $\bbP^{12}$ & $\bbP^{13}$ & $\bbP^{14}$ & $\bbP^{15}$   \\
\hline
$c_{SM}(\tau_{4,4,0})$ &11440 &8008 &4368 &1820 &560 &120 &16 & 1  \\
$c_{SM}(\tau_{4,4,1})$ &9312  &6056 &3008 &1108 &288 &48  & 1 & 0 \\
$c_{SM}(\tau_{4,4,2})$ &1504  &616  &160  & 20  & 0  & 0  & 0 & 0 \\
$c_{SM}(\tau_{4,4,3})$ & 0    & 0   & 0   & 0   & 0  & 0  & 0 & 0\\
\end{tabular}

\vskip .2in
\item $\tau_{4,4,k}^\circ$ \\
\begin{tabular}{c | *{8}{c}}
  Table  & $\bbP^{0}$ & $\bbP^{1}$  & $\bbP^{2}$  & $\bbP^{3}$  & $\bbP^{4} $  & $\bbP^{5} $ & $\bbP^{6}$  & $\bbP^{7}$ \\
\hline
$c_{SM}(\tau_{4,4,0}^{\circ})$ &0 &0  &0   &24   &144  &480  &1080 &1756  \\
$c_{SM}(\tau_{4,4,1}^{\circ})$ &0 &0  &96  &576  &1920 &4320 &7024 &8512  \\
$c_{SM}(\tau_{4,4,2}^{\circ})$ &0 &72 &360 &1068 &2160 &3128 &3316 &2602 \\
$c_{SM}(\tau_{4,4,3}^{\circ})$ &16&48 &104 &152  &144  &80   &20   & 0   \\
\end{tabular} \\
\vskip .1in
\begin{tabular}{c | *{8}{c}}
  Table  & $\bbP^{8}$ & $\bbP^{9}$ & $\bbP^{10}$ & $\bbP^{11}$ & $\bbP^{12}$ & $\bbP^{13}$ & $\bbP^{14}$ & $\bbP^{15}$   \\
\hline
$c_{SM}(\tau_{4,4,0}^{\circ})$ &2128  &1952 &1360 &712  &272 &72  &12 & 1  \\
$c_{SM}(\tau_{4,4,1}^{\circ})$ &7808  &5440 &2848 &1088 &288 &48  &4  & 0    \\
$c_{SM}(\tau_{4,4,2}^{\circ})$ &1504  &616  &160  &20   & 0  & 0  &0  & 0\\
$c_{SM}(\tau_{4,4,3}^{\circ})$ & 0    &0    & 0   & 0   & 0  &0   & 0 & 0\\
\end{tabular}
\end{enumerate}

These examples suggest that the coefficients appearing in the $c_{SM}$ classes of
$\tau_{m,n,k}$ and $\tau^\circ_{m,n,k}$ are non-negative. We conjecture that these
classes are effective (see Conjecture \ref{conjecture; effective}).

The examples also indicate that several coefficients of the class for $\tau^\circ_{m,n,k}$
vanish. More precisely, the terms of dimension $0,1,\cdots, n-1-k$ in $c_{SM}(\tau_{m,n,k}^\circ)$ are $0$ (see Conjecture \ref{conjecture; vanishing}).

\subsection{Projectivized Conormal Cycles (Intersection cohomology characteristic cycles)}
\label{S; proconexample}
\begin{enumerate}
\item $\tau_{4,4,s}$ \\
\begin{small}
\begin{tabular}{c | *{7}{c}}
 Table & $h_1^{15}h_2$ & $h_1^{14}h_2^2$ & $h_1^{13}h_2^3$ & $h_1^{12}h_2^4$ & $h_1^{11}h_2^5$ & $ h_1^{10}h_2^6 $  & $h_1^9h_2^7$ \\
\hline
$Con(\tau_{4,4,1})$     & 0  & 0 & 0& 0& 0 & 0 & 0  \\
$Con(\tau_{4,4,2})$     & 0  & 0 & 0 & 20 & 80 & 176 & 256 \\
$Con(\tau_{4,4,3})$     & 4  & 12 & 36 & 68 &  84 &  60 &  20    \\
\end{tabular}
\vskip .05in
\begin{tabular}{c | *{8}{c}}
 Table &$ h_1^8h_2^8$ & $h_1^7h_2^9$ & $h_1^6h_2^{10}$ & $h_1^5h_2^{11}$ & $h_1^4h_2^{12}$ & $h_1^3h_2^{13} $ & $h_1^2h_2^{14}$ & $h_1h_2^{15}$ \\
\hline
$Con(\tau_{4,4,1})$    & 0   & 20 & 60 & 84 & 68 & 36 & 12 & 4     \\
$Con(\tau_{4,4,2})$    & 286   & 256 & 176 & 80  & 20 & 0  & 0  & 0 \\
$Con(\tau_{4,4,3})$    & 0     &   0 &  0  &  0  &  0 &  0 &  0 & 0    \\
\end{tabular}
\end{small}
\item $\tau_{5,4,s}$ \\ 
\begin{tiny}
\hspace*{-0.5cm}
\begin{tabular}{c | *{9}{c}}
  Table  & $h_1^{19}h_2$ & $h_1^{18}h_2^2$ & $h_1^{17}h_2^3$ & $h_1^{16}h_2^4$ & $h_1^{15}h_2^5$ & $ h_1^{14}h_2^6 $  & $h_1^{13}h_2^7$&$ h_1^{12}h_2^8$ & $h_1^{11}h_2^9$ \\
\hline
$Con(\tau_{5,4,1})$       & 0  & 0  & 0   & 0  & 0  & 0   & 0    & 0    &  0    \\
                         
$Con(\tau_{5,4,2})$       & 0  & 0  & 0   & 0  & 0  & 50 & 240 & 595 & 960  \\
                         
$Con(\tau_{5,4,3})$       & 0  & 10 & 40  & 105 & 176 & 190 & 120 & 35   & 0      \\
\end{tabular}
\vskip .05in 
\begin{tabular}{c | *{10}{c}}
\hspace*{-1cm}
  Table   & $h_1^{10}h_2^{10}$ & $h_1^9h_2^{11}$ & $h_1^8h_2^{12}$ & $h_1^7h_2^{13} $ & $h_1^6h_2^{14}$ & $h_1^5h_2^{15}$ & $h_1^4h_2^{16}$ & $h_1^3h_2^{17}$ & $h_1^2h_2^{18}$ & $h_1h_2^{19}$ \\
\hline
$Con(\tau_{5,4,1})$   &   0      & 0    & 35  & 120 & 190 & 176 & 105 & 40 & 10 & 0    \\
                         
$Con(\tau_{5,4,2})$   & 1116    & 960 & 595 & 240 & 50  & 0    & 0    & 0   & 0   & 0  \\
                         
$Con(\tau_{5,4,3})$   & 0        & 0    & 0    & 0    & 0    & 0    & 0    & 0   & 0   & 0  
\end{tabular}
\end{tiny}
\end{enumerate}
The examples suggest that $Con(\tau_{m,n,k})$ and $Con(\tau_{m,n,n-k})$ are symmetric to each other. This can be explained using the duality of determinantal varieties, as seen in \S\ref{S; dual}.

\subsection{Characteristic Cycles}
\begin{enumerate}
\item $\tau_{4,4,k}$ 
\vskip .1in
\begin{tabular}{c | *{8}{c}}
 Table & $h_1^{15}h_2$ & $h_1^{14}h_2^2$ & $h_1^{13}h_2^3$ & $h_1^{12}h_2^4$ & $h_1^{11}h_2^5$ & $ h_1^{10}h_2^6 $  & $h_1^9h_2^7$&$ h_1^8h_2^8$ \\
\hline
$Ch(\tau_{4,4,1})$        & 4  & 12 & 36 & 88 & 164 & 236 & 276 & 286 \\
$Ch(\tau_{4,4,1}^\circ)$  & 12 & 36 &108 &244 & 412 & 532 & 572 & 572 \\
$Ch(\tau_{4,4,2})$        & -8  & -24 & -72 & -156& -248 & -296 & -296 & -286 \\
$Ch(\tau_{4,4,2}^\circ)$  & -12 & -36 & -108& -224& -332 & -356 & -316 & -286 \\
$Ch(\tau_{4,4,3})$         & 4  & 12 & 36 & 68 &  84 &  60 &  20 & 0  \\
\end{tabular}
\vskip .05in
\begin{tabular}{c | *{7}{c}}
 Table & $h_1^7h_2^9$ & $h_1^6h_2^{10}$ & $h_1^5h_2^{11}$ & $h_1^4h_2^{12}$ & $h_1^3h_2^{13} $ & $h_1^2h_2^{14}$ & $h_1h_2^{15}$ \\
\hline
$Ch(\tau_{4,4,1})$        & 276 & 236 & 164 & 88 & 36 & 12 & 4\\
$Ch(\tau_{4,4,1}^\circ)$  & 532 & 412 & 244 &108 & 36 & 12 & 4\\
$Ch(\tau_{4,4,2})$        & -256 & -176 & -80  & -20 & 0  & 0  & 0\\
$Ch(\tau_{4,4,2}^\circ)$  & -256 & -176 & -80  & -20 & 0  & 0  & 0\\
$Ch(\tau_{4,4,3})$         &   0 &  0  &  0  &  0 &  0 &  0 & 0\\
\end{tabular}
\item $\tau_{4,3,k}$
\vskip .1in
\begin{tabular}{c | *{6}{c}}
  Table  & $h_1^{11}h_2$ & $h_1^{10}h_2^2$ & $h_1^9h_2^3$ & $h_1^8h_2^4$ & $h_1^7h_2^5$ & $h_1^6h_2^6$ \\
\hline
$Ch(\tau_{4,3,1})$       & 0  & 6 & 16 & 27 & 24 & 0       \\
$Ch(\tau_{4,3,1}^\circ)$ & 0  &12 & 32 & 54 & 48 & 10      \\
$Ch(\tau_{4,3,2})$       & 0  & -6 & -16 & -27 & -24 & -10     \\
\end{tabular}
\vskip .05in
\begin{tabular}{c | *{11}{c}}
  Table  & $h_1^5h_2^7$ & $h_1^4h_2^8$ &$h_1^3h_2^9$ & $h_1^2h_2^{10}$ & $h_1h_2^{11}$\\
\hline
$Ch(\tau_{4,3,1})$       & -24 & -27 & -16 & -6 & 0     \\
$Ch(\tau_{4,3,1}^\circ)$ & -24 & -27 & -16 & -6 & 0     \\
$Ch(\tau_{4,3,2})$       &  0  & 0 & 0 & 0 & 0     \\
\end{tabular}
\end{enumerate}

From the examples we can see that there are some interesting patterns on characteristic cycles, which weren't seen from $c_{SM}$ classes. For example, when $k=1$, the absolute values of the coefficients in $Ch(\tau_{m,n,1})$ are symmetric. For $m=n$, the coefficients of $Ch(\tau_{n,n,1})$ share the same sign, and hence are symmetric. This will be explained in the next section.

\subsection{Duality and Projectivized Conormal Cycle}
\label{S; dual}
Let $X\subset \bbP^n$ be a projective subvariety. Let $\bbP^{\vee n}$ be the space of hyperplanes in $\bbP^n$. This is also a projective space of dimension $n$.
We define the dual variety $X^\vee\subset \bbP^{\vee n}$ to be
\[
X^\vee=\overline{\{H | T_x X\subset H \text{ for some smooth point x}\}}
\]
In particular, the determinantal varieties are dual to each other:
\begin{thm}{\cite[Prop 1.7]{DE}}
Let $1\leq k<n$, and let $\tau_{m,n,k}\subset \bbP^{mn-1}$ be the determinantal variety. Then the dual of $\tau_{m,n,k}$ equals
\[
\tau_{m,n,k}^\vee=\tau_{m,n,n-k}\subset \bbP^{\vee mn-1}.
\]
In particular $\tau_{2k,2k,k}$ is self-dual.
\end{thm}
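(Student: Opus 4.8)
The plan is to compute $\tau_{m,n,k}^\vee$ directly from the definition of the dual variety, by identifying the dual projective space with a space of matrices and determining the tangent hyperplanes at a general smooth point. Throughout I use the standing assumption $m\geq n$.

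First I would fix the identification of $\bbP^{\vee mn-1}$. The trace pairing $\langle\varphi,\psi\rangle=\tr(\psi\varphi)$ between $Hom(V_n,V_m)$ and $Hom(V_m,V_n)$ is perfect, so it identifies $\bbP^{\vee mn-1}=\bbP(Hom(V_n,V_m)^\vee)$ with $\bbP(Hom(V_m,V_n))$, a hyperplane $H_\psi$ corresponding to $\psi$ via $H_\psi=\{\varphi:\tr(\psi\varphi)=0\}$. Since the rank of a matrix is unchanged under transposition, the locus of rank $\leq k$ maps in $\bbP(Hom(V_m,V_n))$ is projectively the same subvariety as $\tau_{m,n,n-k}=\{\varphi\in\bbP(Hom(V_n,V_m)):\rk\varphi\leq k\}$; it therefore suffices to show that the tangent hyperplanes to $\tau_{m,n,k}$ sweep out the closure of $\{\psi:\rk\psi\leq k\}$.

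Next I would compute the embedded tangent space. The smooth points of $\tau_{m,n,k}$ are exactly the matrices $\varphi$ of rank $n-k$ (so $\dim\ker\varphi=k$), and for such $\varphi$ the tangent space to the rank $\leq n-k$ locus is
\[
T_\varphi\tau_{m,n,k}=\{X\in Hom(V_n,V_m):X(\ker\varphi)\subseteq\Ima(\varphi)\}.
\]
I would justify this by differentiating the local minor equations, or by passing to a block normal form in which $\varphi$ has an invertible $(n-k)\times(n-k)$ corner; as a check, the codimension of $T_\varphi$ is $k(m-n+k)$, which equals $\codim\tau_{m,n,k}$. The hyperplane $H_\psi$ is tangent at $\varphi$ precisely when $T_\varphi\tau_{m,n,k}\subseteq H_\psi$, i.e.\ $\tr(\psi X)=0$ for every $X$ with $X(\ker\varphi)\subseteq\Ima(\varphi)$. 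Decomposing $V_n=\ker\varphi\oplus W$ and $V_m=\Ima(\varphi)\oplus C$ and expanding the trace blockwise, this condition forces $\psi$ to be supported on the single block $C\to\ker\varphi$; equivalently $\Ima\psi\subseteq\ker\varphi$ and $\psi|_{\Ima\varphi}=0$. Every such $\psi$ factors as $V_m\twoheadrightarrow C\to\ker\varphi\hookrightarrow V_n$ and hence has rank at most $\min(\dim\ker\varphi,\dim C)=\min(k,m-n+k)=k$.

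Finally I would carry out the global identification. As $\varphi$ ranges over the rank-$(n-k)$ matrices, the pair $(\ker\varphi,C)$ ranges over all pairs consisting of a $k$-plane in $V_n$ and an $(m-n+k)$-dimensional quotient of $V_m$, and for generic $\varphi$ the conormal map $\psi$ has rank exactly $k$; hence the tangent hyperplanes are dense in $\{\psi:\rk\psi\leq k\}$. Taking closures gives $\tau_{m,n,k}^\vee=\{\psi:\rk\psi\leq k\}$, which under the transpose identification above is $\tau_{m,n,n-k}$, and the self-dual assertion is the special case $m=n=2k$. The main obstacle will be establishing the tangent-space formula cleanly and tracking the transposition in the trace pairing, so that the rank $\leq k$ locus in the dual space is identified with $\tau_{m,n,n-k}$ and not its transpose-variant; biduality ($\tau_{m,n,n-k}^\vee=\tau_{m,n,k}$) and the dimension count $\dim\tau_{m,n,n-k}=k(m+n-k)-1$ serve as useful consistency checks.
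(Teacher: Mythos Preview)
The paper does not prove this statement; it is quoted from the reference \cite{DE} without argument. Your direct computation---identifying the dual space via the trace pairing, computing the affine tangent space $T_\varphi\tau_{m,n,k}=\{X:X(\ker\varphi)\subseteq\Ima\varphi\}$ at a rank-$(n-k)$ point, and reading off the conormal condition blockwise---is the standard and correct proof of this classical fact.

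One minor point worth tightening: the assertion that the rank-$\leq k$ locus in $\bbP(Hom(V_m,V_n))$ ``is projectively the same subvariety as $\tau_{m,n,n-k}$'' silently uses an identification of $\bbP^{\vee\, mn-1}$ with $\bbP^{mn-1}$ (namely transpose). Since the theorem as stated literally places $\tau_{m,n,n-k}$ inside $\bbP^{\vee\, mn-1}$, you should spell out that transpose carries the rank-$\leq k$ locus in $Hom(V_m,V_n)$ isomorphically onto $\tau_{m,n,n-k}\subset\bbP(Hom(V_n,V_m))$, so that the reader sees exactly which identification is in play. Everything else---the dimension count, the block analysis of $\tr(\psi X)$, and the density of rank-exactly-$k$ conormals---is clean.
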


In fact the Chern-Mather classes of dual varieties can be mutually obtained by an involution.
\begin{thm}{\cite[Theorem 1.3]{Aluffi}}
Let $X\subset \bbP^n$ be a proper subvariety. 
Let $c_M(X)=\sum_{i=0}^n \beta_i H^i=(-1)^{\dim X}q(H)\cap [\bbP^n]$ be a polynomial $q$ of the hyperplane class $H$. Then 
\[
c_M(X^\vee)=(-1)^{\dim X^\vee} \mathcal{J}_n(q)  [\bbP^{\vee n}].
\]
Here $\mathcal{J}_n$ is an involution that sends a function $p=p(t)$ to
\[
\mathcal{J}_n(p)=p(-1-t)-p(-1)((1+t)^{n+1}-t^{n+1}).
\]
\end{thm}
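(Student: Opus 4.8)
The plan is to deduce the involution from a single geometric input—that $X$ and its dual $X^\vee$ carry \emph{the same} conormal variety—and then to transport this symmetry across the shadow dictionary of \S\ref{S; projconormal}, which already translates between Chern-Mather classes and conormal cycles. Concretely, recall from Proposition~\ref{prop; chc_*} that for $\varphi = Eu_X$ the characteristic cycle $ch(Eu_X) = (-1)^{\dim X}\,Con(X)$ is an explicit, invertible linear image of $c_*(Eu_X) = c_M(X)$ inside $A_*(\bbP^n\times\bbP^{\vee n})$. Thus $c_M(X)$ and the projectivized conormal cycle $Con(X)$ determine one another, and it suffices to understand how $Con$ behaves under projective duality and then invert.

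First I would invoke the classical reflexivity (biduality) theorem. Viewing the projectivized conormal space as the incidence locus $\overline{\{(x,H): x\in X_{sm},\ T_xX\subset H\}}\subset \bbP^n\times\bbP^{\vee n}$, this variety is manifestly symmetric under interchanging point and hyperplane, and under that interchange it becomes the conormal variety of $X^\vee$. Hence, writing $\sigma$ for the swap $h_1\leftrightarrow h_2$ of the two pulled-back hyperplane classes, one gets $Con(X^\vee) = \sigma\big(Con(X)\big)$. Equivalently, the polar degrees of $X^\vee$ are those of $X$ read in reverse order, which also explains the numerical symmetry observed in \S\ref{S; proconexample}.

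The remaining task is purely algebraic. I would write $c_M(X)=\sum_k\gamma_k H^{n-k}$, feed it through Proposition~\ref{prop; chc_*} to obtain the intermediate form $Con(X) = (-1)^{\dim X}\sum_{k}(-1)^k\gamma_k (h_1+h_2)^{k+1}h_1^{n-k}$, apply $\sigma$, and then solve the triangular linear system that re-expresses $\sigma\big(Con(X)\big)$ in the shape prescribed by Proposition~\ref{prop; chc_*} for $X^\vee$. Reading off the resulting coefficients of $c_M(X^\vee)$ yields a linear operator on the polynomial $q$, and the claim is precisely that this operator equals $\mathcal{J}_n$.

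The main obstacle is the bookkeeping in this last step, carried out in the truncated ring $\bbZ[h_1,h_2]/(h_1^{n+1},h_2^{n+1})$: one must expand $(h_1+h_2)^{k+1}h_1^{n-k}$ and its swap in the monomial basis $\{h_1^i h_2^{n+1-i}: 1\le i\le n\}$, match coefficients, and invert. The binomial identities involved are routine, but the delicate point is the correction term $q(-1)\big((1+t)^{n+1}-t^{n+1}\big)$, which the naive reversal $t\mapsto -1-t$ does not produce. This term compensates for the truncation $h_i^{n+1}=0$ and for the two "boundary" polar degrees (the degree and the dual degree of $X$), which the substitution $t\mapsto -1-t$ alone mishandles; tracking how the support of $Con(X)$ on the incidence divisor of class $h_1+h_2$ forces this extra multiple of $(1+t)^{n+1}-t^{n+1}$ is the crux. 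Verifying that $\mathcal{J}_n\circ\mathcal{J}_n=\mathrm{id}$, consistent with $(X^\vee)^\vee=X$, provides a useful check on the computation.
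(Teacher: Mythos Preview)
The paper does not prove this statement: it is quoted verbatim as \cite[Theorem 1.3]{Aluffi} and used as a black box in \S\ref{S; dual}. There is therefore no ``paper's own proof'' to compare your proposal against.

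That said, your outline is the standard and correct route to this result (and is essentially the argument in the cited reference). The key geometric input is exactly the one you identify: reflexivity gives $Con(X^\vee)=\sigma(Con(X))$, and the shadow/structure-theorem dictionary of Proposition~\ref{prop; chc_*} is invertible, so the involution on $c_M$ is forced. Your diagnosis of the correction term $q(-1)\big((1+t)^{n+1}-t^{n+1}\big)$ is also right: it arises from the relation $h_1^{n+1}=h_2^{n+1}=0$ when you rewrite $\sigma\big((h_1+h_2)^{k+1}h_1^{n-k}\big)$ back in the basis $(h_1+h_2)^{j+1}h_1^{n-j}$, and it is exactly what makes $\mathcal J_n$ an involution. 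If you want to complete the argument, the cleanest bookkeeping is to pass to generating functions: encode $c_M(X)$ as $q(H)$, encode $Con(X)$ as $\sum_k(-1)^k\gamma_k(h_1+h_2)^{k+1}h_1^{n-k}$ modulo $(h_1^{n+1},h_2^{n+1})$, and observe that the swap $h_1\leftrightarrow h_2$ followed by re-expansion in the original basis is, after the substitution $h_2=H$, $h_1=-1-H$ (formally), precisely $p\mapsto p(-1-t)$ up to the boundary correction supported on the class of the incidence divisor $h_1+h_2$.
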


For example, let $X=\tau_{4,4,1}$, it is dual to $\tau_{4,4,3}$. Then 
\begin{align*}
q(H)
=&(-1)c_M(\tau_{4,4,1}) \\
=&-48H^{15}-288H^{14}-1128H^{13}-3168H^{12}-6672H^{11}-10816 H^{10}-13716H^9 \\
&-13716H^8-10816H^7-6672H^6-3168H^5-1128H^4-288H^3-48H^2- 4H \\
\text{And} & \\
\mathcal{J}_n(q)
=& -16H^{15}-48H^{14}-104H^{13}-152H^{12}-144H^{11}-80H^{10}-20H^9 \\
=&(-1)c_M(\tau_{4,4,3}).
\end{align*}
Since the projectivized conormal cycle $Con(X)$ can be obtained from $c_M(X)$ via Prop~\ref{prop; chc_*}, the above theorem induces a symmetry between the projectivized conormal cycles of dual projective varieties. Let $\alpha$ be a class in $A_{n-1}(\bbP^n\times \bbP^n)$, then $\alpha=\sum_{i=0}^n \delta_i h_1^ih_2^{n-i}$ can be written as a class of the pull-back of hyperplane classes $h_1,h_2$ .
We define $\alpha^\dagger$ to be the `flip' of $\alpha$, i.e., $\alpha^\dagger=\sum_{i=0}^n \delta_i h_1^{n-i}h_2^i$. This `flip' process is compatible with addition: $(\alpha+\beta)^\dagger=\alpha^\dagger + \beta^\dagger$.

Let $X\subset \bbP^n$ be a $d$-dimensional projective subvariety. Let $Con(X)=\sum_{i=1}^n \delta_{i} h_1^{n-i}h_2^i \cap [\bbP^n\times \bbP^n]$ be a polynomial of pull-back hyperplane classes $h_1,h_2$. Then one has $Con(X^\vee)=Con(X)^\dagger$, i.e.,
\[
Con(X^\vee) =\sum_{i=1}^n \delta_{i} h_1^ih_2^{n-i} \cap [\bbP^{\vee n}\times \bbP^{n}].
\]
Moreover, the $l$th polar degree of $X$ equals the $(d-l)$th polar degree of $X^\vee$, and $gED(X)=gED(X^\vee)$.

For example, 
\begin{align*}
Con(\tau_{4,4,1})= 20h_1^7h_2^9 + 60 h_1^6h_2^{10}+ 84 h_1^5h_2^{11}+68 h_1^4h_2^{12}+36 h_1^3h_2^{13}+ 12 h_1^2h_2^{14}+ 4h_1^1h_2^{15}, 
\end{align*} and  
\begin{align*}
Con(\tau_{4,4,3})= 4h_1^{15}h_2^1+ 12h_1^{14}h_2^2+ 36h_1^{13}h_2^3+ 68h_1^{12}h_2^4+
84h_1^{11}h_2^5+ 60h_1^{10}h_2^6+ 20 h_1^{9}h_2^7 .
\end{align*}

Recall that $Con(X)=ch((-1)^{\dim X}Eu_X)$ and $Ch(X)=ch(\ind_X)$, then the relation between the local Euler obstruction and the indicator functions will also reflect on the characteristic cycles level. For determinantal varieties, Corollary~\ref{thm; csmalgorithm} shows that
\begin{align*}
\ind_{\tau_{m,n,k}}=\sum_{i=0}^{n-1-k} (-1)^i\binom{k+i-1}{k-1} Eu_{\tau_{m,n,k+i}}.
\end{align*} 
Thus we have
\begin{align*}
Ch(\tau_{m,n,k})
&=\sum_{i=0}^{n-1-k} (-1)^{(m+k+i)(n-k-i)-1+i} \binom{k+i-1}{k-1} Con(\tau_{m,n,k+i}) \\
&=\sum_{i=0}^{n-1-k} (-1)^{(m+k)(n-k)+i(m-n)-1} \binom{k+i-1}{k-1} Con(\tau_{m,n,k+i}). 
\end{align*}

In particular when $k=1$ we have the following property.
\begin{prop}
When $m$ is even and $n$ is odd, we have $Ch(\tau_{m,n,1})=-Ch(\tau_{m,n,1})^\dagger$. Otherwise we will have $Ch(\tau_{m,n,1})=Ch(\tau_{m,n,1})^\dagger$. 
\end{prop}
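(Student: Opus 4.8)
The plan is to specialize the general expression for $Ch(\tau_{m,n,k})$ displayed just above the proposition to the case $k=1$, apply the flip operator $\dagger$ term by term, and then compare the two resulting sign patterns using the self-duality of the family $\tau_{m,n,j}^\vee=\tau_{m,n,n-j}$. First I would substitute $k=1$ into
\[
Ch(\tau_{m,n,k})=\sum_{i=0}^{n-1-k} (-1)^{(m+k)(n-k)+i(m-n)-1} \binom{k+i-1}{k-1} Con(\tau_{m,n,k+i}).
\]
Since $\binom{i}{0}=1$, after the reindexing $j=i+1$ this becomes a sum over $j=1,\dots,n-1$ of $(-1)^{E(j)}\,Con(\tau_{m,n,j})$, where $E(j)=(m+1)(n-1)+(j-1)(m-n)-1$.

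Next I would apply the flip. Because $\dagger$ is additive, $Ch(\tau_{m,n,1})^\dagger$ is the same sum with each $Con(\tau_{m,n,j})$ replaced by $Con(\tau_{m,n,j})^\dagger$. The conormal flip identity $Con(X^\vee)=Con(X)^\dagger$ together with the duality $\tau_{m,n,j}^\vee=\tau_{m,n,n-j}$ gives $Con(\tau_{m,n,j})^\dagger=Con(\tau_{m,n,n-j})$. Reindexing by $l=n-j$ then expresses $Ch(\tau_{m,n,1})^\dagger$ as a sum over $l=1,\dots,n-1$ of $(-1)^{E(n-l)}\,Con(\tau_{m,n,l})$, so that for each fixed $l$ the coefficient of $Con(\tau_{m,n,l})$ in $Ch(\tau_{m,n,1})$ is $(-1)^{E(l)}$ while in $Ch(\tau_{m,n,1})^\dagger$ it is $(-1)^{E(n-l)}$.

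The heart of the argument is the parity comparison of these two exponents. A direct subtraction yields $E(l)-E(n-l)=(m-n)(2l-n)$. The crucial observation—the one point deserving care—is that the parity of this quantity is \emph{independent of} $l$: modulo $2$ the factor $2l-n$ has the parity of $n$, so $(m-n)(2l-n)$ is odd exactly when both $m-n$ and $n$ are odd, that is, precisely when $n$ is odd and $m$ is even, and is even in every other case. Since this parity is uniform in $l$, the two sign sequences are either globally opposite (when $m$ is even and $n$ is odd) or globally equal (otherwise), which is exactly the dichotomy $Ch(\tau_{m,n,1})=-Ch(\tau_{m,n,1})^\dagger$ versus $Ch(\tau_{m,n,1})=Ch(\tau_{m,n,1})^\dagger$ asserted in the statement.

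The main obstacle is nothing conceptual but the sign bookkeeping: one must carry the exponent $(m+k)(n-k)+i(m-n)-1$ correctly through the substitution $j\mapsto n-j$ and confirm that the $l$-dependent contributions cancel, leaving a single global sign. Once the identity $E(l)-E(n-l)=(m-n)(2l-n)$ is established, the parity check is immediate and the two cases of the proposition follow at once.
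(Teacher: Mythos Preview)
Your proof is correct and rests on the same two ingredients as the paper's: the duality $Con(\tau_{m,n,j})^\dagger=Con(\tau_{m,n,n-j})$ and a parity analysis of the sign exponent. The only difference is organizational: the paper pairs the terms $Con(\tau_{m,n,i})$ and $Con(\tau_{m,n,n-i})$ into symmetric or anti-symmetric combinations and then splits into three cases according to the parities of $m-n$ and $n$, whereas your single computation $E(l)-E(n-l)=(m-n)(2l-n)\equiv (m-n)n\pmod 2$ handles all cases at once and makes the independence from $l$ immediate---a slightly more economical route to the same conclusion.
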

\begin{proof}
When $k=1$  we have $
Ch(\tau_{m,n,1})=\sum_{i=1}^{n-1} (-1)^{mn+i(m-n)} Con(\tau_{m,n,i}).
$
Notice that $Con(\tau_{m,n,i})=Con(\tau_{m,n,n-i})^\dagger$, thus 
\begin{align*}
(Con(\tau_{m,n,i})+Con(\tau_{m,n,n-i}))^{\dagger}=Con(\tau_{m,n,i})^\dagger +Con(\tau_{m,n,n-i})^\dagger =Con(\tau_{m,n,n-i})+Con(\tau_{m,n,i}).  
\end{align*}
is symmetric, while
\begin{align*}
(Con(\tau_{m,n,i})-Con(\tau_{m,n,n-i}))^{\dagger}=Con(\tau_{m,n,i})^\dagger -Con(\tau_{m,n,n-i})^\dagger =Con(\tau_{m,n,n-i})-Con(\tau_{m,n,i}).  
\end{align*}
is anti-symmetric.
When $m-n$ is even, 
\begin{align*}
&Ch(\tau_{m,n,1})
=\sum_{i=1}^{n-1} (-1)^{mn} Con(\tau_{m,n,i}) \\
=&Con(\tau_{m,n,1})+Con(\tau_{m,n,n-1}) + Con(\tau_{m,n,2})+Con(\tau_{m,n,n-2})+\cdots . 
\end{align*}
is a sum of symmetric terms, and hence is symmetric.
When $m-n$ is odd, but $n$ is even, then 
\begin{align*}
&(-1)^{mn}Ch(\tau_{m,n,1})
=\sum_{i=1}^{n-1} (-1)^{i} Con(\tau_{m,n,i}) \\
=&-Con(\tau_{m,n,1})-Con(\tau_{m,n,n-1}) + Con(\tau_{m,n,2})+Con(\tau_{m,n,n-2})-\cdots .  
\end{align*}
is an alternative sum of symmetric terms, and hence is symmetric. 
When $m-n$ is odd, and $n$ is odd, 
\begin{align*}
&(-1)^{mn}Ch(\tau_{m,n,1})
=\sum_{i=1}^{n-1} (-1)^{i} Con(\tau_{m,n,i}) \\
=&-Con(\tau_{m,n,1})+Con(\tau_{m,n,n-1}) + Con(\tau_{m,n,2})-Con(\tau_{m,n,n-2})-\cdots .  
\end{align*}
is an alternative sum of anti-symmetric terms, and hence is anti-symmetric.
\end{proof}

Also, Theorem~\ref{thm; eulerobstruction} shows that
\[
Eu_{\tau_{m,n,k}}=\sum_{i=k}^{n-1} \binom{i}{k} \ind_{\tau_{m,n,i}^\circ}.
\] 
Thus we have the following identity involving classes of Lagrangian cycles.
\begin{prop}
For $1\leq k<n$, we have the following identity 
\[
\sum_{i=k}^{n-1} \binom{i}{k} Ch(\tau_{m,n,i}^\circ)
=\sum_{i=n-k}^{n-1} \binom{i}{n-k} Ch(\tau_{m,n,i}^\circ)^\dagger
\]
\end{prop}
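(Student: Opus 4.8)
The plan is to derive both sides of the identity from the single constructible-function formula
\[
Eu_{\tau_{m,n,k}}=\sum_{i=k}^{n-1}\binom{i}{k}\ind_{\tau_{m,n,i}^\circ}
\]
by pushing it through the group homomorphism $ch$ and then applying projective duality, so that the whole statement becomes a comparison of two expressions for one conormal cycle. First I would apply $ch$ to the displayed formula. Since $ch$ is additive and $Ch(\tau_{m,n,i}^\circ)=ch(\ind_{\tau_{m,n,i}^\circ})$, the left-hand side of the proposition is exactly $ch(Eu_{\tau_{m,n,k}})$; by the defining property of $ch$ this equals $(-1)^{\dim\tau_{m,n,k}}Con(\tau_{m,n,k})$.

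Next I would repeat the computation with $k$ replaced by $n-k$. Applying $ch$ to $Eu_{\tau_{m,n,n-k}}=\sum_{i=n-k}^{n-1}\binom{i}{n-k}\ind_{\tau_{m,n,i}^\circ}$ gives
\[
\sum_{i=n-k}^{n-1}\binom{i}{n-k}Ch(\tau_{m,n,i}^\circ)=(-1)^{\dim\tau_{m,n,n-k}}Con(\tau_{m,n,n-k}).
\]
Because the flip $\dagger$ is additive, I may apply it term by term to this equation, so the right-hand side of the proposition equals $(-1)^{\dim\tau_{m,n,n-k}}Con(\tau_{m,n,n-k})^\dagger$. Now I would invoke the duality $\tau_{m,n,n-k}^\vee=\tau_{m,n,k}$ together with the relation $Con(X^\vee)=Con(X)^\dagger$ to rewrite $Con(\tau_{m,n,n-k})^\dagger=Con(\tau_{m,n,k})$.

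Both sides have now been reduced to a sign times the single cycle $Con(\tau_{m,n,k})$, so all that remains is to match the two signs, and this is where I expect the only real obstacle to be. Everything preceding is formal: it uses only the additivity of $ch$ and of $\dagger$, the defining property $ch(Eu_X)=(-1)^{\dim X}Con(X)$, and the duality already established in this section. The sign comparison asks whether $\dim\tau_{m,n,k}$ and $\dim\tau_{m,n,n-k}$ have the same parity; using $\dim\tau_{m,n,j}=(m+j)(n-j)-1$ their difference is
\[
(m+k)(n-k)-(m+n-k)k=m(n-2k),
\]
which is congruent to $mn$ modulo $2$. Thus the heart of the proof is this parity bookkeeping, and it is precisely here that the relation between $m$, $n$, and $k$ must be accounted for, the unsigned form of the identity holding exactly when $m(n-2k)$ is even.
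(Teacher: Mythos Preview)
Your approach is exactly the paper's: apply $ch$ to the Euler-obstruction decomposition $Eu_{\tau_{m,n,k}}=\sum_{i\ge k}\binom{i}{k}\ind_{\tau_{m,n,i}^\circ}$ to get $(-1)^{\dim\tau_{m,n,k}}Con(\tau_{m,n,k})$, do the same with $k$ replaced by $n-k$, flip, and invoke $Con(\tau_{m,n,k})=Con(\tau_{m,n,n-k})^\dagger$. The paper's proof is literally the chain
\[
\sum_{i=k}^{n-1}\binom{i}{k}Ch(\tau_{m,n,i}^\circ)=(-1)^{(m+k)(n-k)-1}Con(\tau_{m,n,k})=(-1)^{(m+k)(n-k)-1}Con(\tau_{m,n,n-k})^\dagger=\sum_{i=n-k}^{n-1}\binom{i}{n-k}Ch(\tau_{m,n,i}^\circ)^\dagger,
\]
with no further comment.

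You have in fact been more careful than the paper. The last equality in the displayed chain silently replaces the sign $(-1)^{(m+k)(n-k)-1}$ by $(-1)^{(m+n-k)k-1}$, and as you compute, the difference of the exponents is $(m+k)(n-k)-(m+n-k)k=m(n-2k)\equiv mn\pmod 2$. So the identity as stated holds precisely when $mn$ is even; when $m$ and $n$ are both odd the two sides differ by a global sign. The paper does not address this, and its proof has a genuine gap at that step. Your instinct that ``the heart of the proof is this parity bookkeeping'' is correct, and your conclusion that the unsigned identity requires $m(n-2k)$ even is the right diagnosis. (Note that the special case $n=2k$ treated immediately after the proposition is unaffected, since then $m(n-2k)=0$.)
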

\begin{proof}
\begin{align*}
&\sum_{i=k}^{n-1} \binom{i}{k} Ch(\tau_{m,n,i}^\circ)=(-1)^{(m+k)(n-k)-1}Con(\tau_{m,n,k}) \\
=& (-1)^{(m+k)(n-k)-1}Con(\tau_{m,n,n-k})^\dagger 
= \sum_{i=n-k}^{n-1} \binom{i}{n-k} Ch(\tau_{m,n,i}^\circ)^\dagger . \qedhere
\end{align*}
\end{proof}
In particular, when $n=2k$, we have
\begin{align*}
&\sum_{i=k}^{2k-1} \binom{i}{k} Ch(\tau_{m,2k,i}^\circ)=\sum_{i=k}^{2k-1} \binom{i}{k} Ch(\tau_{m,2k,i}^\circ)^\dagger.
\end{align*}
Or
\begin{align*}
\sum_{i=k}^{2k-1} \binom{i}{k} (Ch(\tau_{m,n,i}^\circ)-Ch(\tau_{m,n,i}^\circ)^\dagger)=0.
\end{align*}
\begin{ex}
For example, let $m=n=4$, then as the reader may verify
by using the examples listed in \S\ref{S; proconexample}, we have
\begin{align*}
Ch(\tau_{4,4,1}^\circ)+2Ch(\tau_{4,4,2}^\circ)+3Ch(\tau_{4,4,3}^\circ)
=Ch(\tau_{4,4,3}^\circ)^\dagger  \/.
\end{align*}
and 
\begin{align*}
Ch(\tau_{4,4,2}^\circ)+3Ch(\tau_{4,4,3}^\circ)
=Ch(\tau_{4,4,2}^\circ)^\dagger + 3Ch(\tau_{4,4,3}^\circ)^\dagger  \/. 
\end{align*}
\end{ex}

\subsection{Conjectures}
\label{conjectures}
We conclude this paper by formulating two conjectures obtained from analyzing the examples. 
Assume $m\geq n$. Let $\tau_{m,n,k}\subset \bbP^{mn-1}$ be the degeneracy loci consisting of all $m$ by $n$ matrices with kernel dimension $\geq k$. Denote by $\tau_{m,n,k}^{\circ}=\tau_{m,n,k} \smallsetminus \tau_{m,n,k+1}$ the largest strata of $\tau_{m,n,k}$, and  write
\[
c_{SM}(\tau_{m,n,k}^{\circ})=\sum_{l=0}^{mn-1} \eta^{\circ}_l(m,n,k) H^{mn-1-l}=\sum_{l=0}^{mn-1} \eta^{\circ}_l(m,n,k)[\bbP^l]
\]
as a polynomial of degree $mn-1$ in the hyperplane class $H$.
Then
\begin{conj}
\label{conjecture; effective}
The coefficients $\eta^{\circ}_l(m,n,k)$ are non-negative for $0\leq l\leq mn-1$.  
\end{conj}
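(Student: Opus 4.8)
The plan is to exploit the fibered structure of the open stratum over a Grassmannian, isolate the positive ``fiber $+$ base'' contribution, and locate the difficulty precisely in the pushforward to $\bbP^{mn-1}$. First I would observe that restricting $\rho$ to the full-rank locus exhibits $\tau_{m,n,k}^{\circ}$ as a smooth fiber bundle $\pi\colon\tau_{m,n,k}^{\circ}\to G(k,n)$: since $\hat\tau_{m,n,k}=\bbP(Q^{\vee m})$ and $\tau_{m,n,k}^{\circ}=\nu^{-1}(\tau_{m,n,k}^{\circ})$ is the complement of the relative determinantal locus $\nu^{-1}(\tau_{m,n,k+1})$, the fiber over $\Lambda\in G(k,n)$ is the set of injective maps $\overline{\varphi}\colon V_n/\Lambda\to V_m$, i.e. the full-rank stratum $\tau_{m,n-k,0}^{\circ}=\bbP^{m(n-k)-1}\smallsetminus\tau_{m,n-k,1}$. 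Thus the fiber is again a determinantal complement, but of corank $0$ and in a smaller matrix space, which suggests organizing the argument as a reduction of the case $k\geq 1$ to the full-rank family $\tau_{m,n-k,0}^{\circ}$ together with positivity on $G(k,n)$.

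Next, because $\pi$ is smooth and the constant function $\ind_{\tau_{m,n,k}^{\circ}}$ is the pullback $\pi^{*}\ind_{G(k,n)}$, the Verdier--Riemann--Roch compatibility of the MacPherson transformation with smooth pullback (available in Kennedy's framework \cite{Kennedy}) yields, as a class on the open stratum,
\[
c_{*}(\ind_{\tau_{m,n,k}^{\circ}})=c(T_{\pi})\cap\pi^{*}c_{SM}(G(k,n)),
\]
where $T_{\pi}$ is the relative tangent bundle, the restriction of $T_{\bbP(Q^{\vee m})/G(k,n)}$, with $c(T_{\pi})=c(\rho^{*}Q^{\vee m}\otimes\calO(1))$ by the relative Euler sequence. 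The base factor $c_{SM}(G(k,n))=c(S^{\vee}\otimes Q)\cap[G(k,n)]$ is effective, since the tangent bundle of the Grassmannian is globally generated and hence has Schubert-nonnegative Chern classes. The base case $k=n-1$ is immediate: there $\tau_{m,n,n-1}^{\circ}=\tau_{m,n,n-1}\cong\bbP^{m-1}\times\bbP^{n-1}$ is the smooth Segre variety, whose total Chern class is visibly effective.

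The hard part will be that Conjecture~\ref{conjecture; effective} asks for effectivity of the image of this class in $A_{*}(\bbP^{mn-1})$, not of the intrinsic class on the open set. Transporting to $\bbP^{mn-1}$ requires pushing forward along $\overline{\nu}\colon\hat\tau_{m,n,k}\to\bbP^{mn-1}$; as in the proof of Theorem~\ref{thm; cmalgorithm} this is governed by the Segre classes of $Q^{\vee m}$, which carry signs. Equivalently, the Verdier--Riemann--Roch formula pins the class down only modulo cycles supported on the boundary $\nu^{-1}(\tau_{m,n,k+1})$, and reinstating that boundary contribution reproduces exactly the \emph{alternating} expression $\sum_{i}(-1)^{i}\binom{k+i}{k}c_{M}(\tau_{m,n,k+i})$ of Corollary~\ref{thm; csmalgorithm}. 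Proving that this alternating sum is nevertheless effective is the crux, and I expect a term-by-term bound to be hopeless; what is needed is a cancellation-free, positive form of the pushforward from $G(k,n)$ to $\bbP^{mn-1}$. This is precisely the analogue of the phenomenon that Huh settled for Schubert varieties in a Grassmannian \cite{MR3419959} (conjectured by Aluffi--Mihalcea \cite{Aluffi4}), so I would pursue the $(\bbC^{*})^{m+n}$-equivariant refinement sketched in the introduction: compute the equivariant class $c_{SM}^{T}(\tau_{m,n,k}^{\circ})$ using that the torus fixed points are the single-entry matrices, and deduce ordinary effectivity from a positivity theorem for equivariant CSM classes in the spirit of Huh and Aluffi--Mihalcea. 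Essentially all of the content lies in making this signed pushforward positive.
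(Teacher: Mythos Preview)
The statement you are attempting to prove is labeled \emph{Conjecture}~\ref{conjecture; effective} in the paper, and indeed the paper offers no proof of it whatsoever: it is formulated purely on the basis of the numerical examples in \S\ref{example}, and the author explicitly groups it with the analogous open effectivity conjecture for Schubert varieties in flag manifolds. So there is no ``paper's own proof'' to compare against.

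Your proposal is honest about this: you correctly set up the fiber-bundle structure $\tau_{m,n,k}^\circ\to G(k,n)$ with fiber the full-rank locus in $\bbP^{m(n-k)-1}$, you correctly invoke Verdier--Riemann--Roch to write the intrinsic class as $c(T_\pi)\cap\pi^*c_{SM}(G(k,n))$, and you correctly observe that the base factor is effective. But you then identify the genuine obstruction --- that effectivity in $A_*(\bbP^{mn-1})$ requires controlling the signed pushforward governed by Segre classes of $Q^{\vee m}$, equivalently the alternating sum of Corollary~\ref{thm; csmalgorithm} --- and you do not resolve it. Your final paragraph is a research program (pass to $(\bbC^*)^{m+n}$-equivariant classes and hope for a Huh-style positivity theorem), not a proof. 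In particular, even the fiber factor $c_{SM}(\tau_{m,n-k,0}^\circ)$ is itself an instance of the conjecture (the $k=0$ case), so your ``reduction'' is not to a known case.

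In short: the paper does not claim a proof, and your proposal does not supply one either; what you have written is a reasonable outline of where the difficulty lies and a plausible line of attack, which is consistent with the paper's own remark that the equivariant version ``will be presented in a following paper.''
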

\begin{conj}
\label{conjecture; vanishing}
For $l=0,1,2,\cdots, n-k-2$, we have
\[
\eta^{\circ}_l(m,n,k)=0
\]
\end{conj}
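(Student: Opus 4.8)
The plan is to use the Microlocal Index Formula (Theorem~\ref{thm; microlocal}) as the central tool. It relates three families of integers attached to the stratification $\{S_i=\tau_{m,n,i}^\circ\}$ of $\bbP^{mn-1}$: the microlocal multiplicities $r_i$ we wish to determine, the stalk Euler characteristics $\chi_j(\calICT)$, and the local Euler obstructions $e(j,i)$. Since we can compute both the stalk Euler characteristics and the Euler obstructions explicitly, the formula reduces the problem to a finite triangular linear system whose unique solution we then exhibit.

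First I would record the stalk Euler characteristics. Proposition~\ref{prop; small} shows that the Tjurina transform $\nu\colon\hat\tau_{m,n,k}\to\tau_{m,n,k}$ is a small resolution, so by Theorem~\ref{thm; stalk} the sheaf $\calICT$ is recovered (up to shift) from the constant sheaf on $\hat\tau_{m,n,k}$, and its stalk Euler characteristic at a point $p\in S_j$ equals $\chi(\nu^{-1}(p))$. As in the proof of Proposition~\ref{prop; small}, the fiber over a point of $\tau_{m,n,j}^\circ$ is $G(k,j)$, hence $\chi_j(\calICT)=\chi(G(k,j))=\binom{j}{k}$.

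Next I would record the Euler obstructions. By Theorem~\ref{thm; eulerobstruction}, the local Euler obstruction of $\tau_{m,n,i}$ along the stratum $\tau_{m,n,j}^\circ$ is $e(j,i)=Eu_{\tau_{m,n,i}}(\tau_{m,n,j}^\circ)=\binom{j}{i}$, with the convention $\binom{a}{b}=0$ for $a<b$ (so that $e(j,i)=0$ unless $S_j\subset\overline{S_i}$, i.e. $j\geq i$). Substituting both computations into Theorem~\ref{thm; microlocal} gives, for every $j$, the relation $\binom{j}{k}=\sum_i \binom{j}{i}\,r_i$. The coefficient matrix $[\binom{j}{i}]$ is lower triangular with unit diagonal, hence invertible, so the system has a unique solution; since the choice $r_k=1$ and $r_i=0$ for $i\neq k$ reproduces the right-hand side $\binom{j}{k}$, this must be the solution. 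The claim about the $r_i$ follows, and the identification $\calCC(\calICT)=[T^*_{\tau_{m,n,k}}\bbP^{mn-1}]$, together with its irreducibility, is then immediate from the Lagrangian expansion $\calCC(\calICT)=\sum_i r_i[T^*_{\overline{S_i}}\bbP^{mn-1}]$.

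The genuinely hard part is not this argument but its two inputs, both of which are already available to us. Establishing that the Tjurina transform is a small resolution (the codimension estimate of Proposition~\ref{prop; small}) is exactly what makes Theorem~\ref{thm; stalk} applicable and lets us read off the stalk Euler characteristics as Euler characteristics of Grassmannian fibers; and the explicit formula for the local Euler obstruction (Theorem~\ref{thm; eulerobstruction}) is the deep arithmetic input that makes the coefficient matrix precisely the Pascal matrix $[\binom{j}{i}]$. Once these are granted, the remaining step is elementary linear algebra, and the only care needed is to keep the convention $\binom{a}{b}=0$ for $a<b$ so that the triangular structure—and hence invertibility and uniqueness—is genuine.
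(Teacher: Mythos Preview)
Your proposal does not address the stated conjecture at all. The statement in question is Conjecture~\ref{conjecture; vanishing}, which asserts the vanishing of the low-dimensional coefficients $\eta^\circ_l(m,n,k)$ of the Chern--Schwartz--MacPherson class $c_{SM}(\tau_{m,n,k}^\circ)$ for $l=0,1,\dots,n-k-2$. What you have written instead is a proof of Theorem~\ref{thm; charcycle}, the irreducibility of the characteristic cycle of the intersection cohomology sheaf $\calICT$. These are entirely different assertions: the first concerns specific coefficients of a $c_{SM}$ class in $A_*(\bbP^{mn-1})$, while the second concerns microlocal multiplicities of a constructible sheaf. Nothing in your argument about the $r_i$ bears on the vanishing of the $\eta^\circ_l$.

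Moreover, the statement you were asked to treat is explicitly labeled a \emph{conjecture} in the paper, and the paper does not prove it; it is proposed on the basis of low-dimensional computations (see \S\ref{conjectures}). There is therefore no ``paper's own proof'' to compare against, and any purported proof would need to engage directly with the structure of the coefficients $\eta^\circ_l$ (for instance via the explicit formulas in Theorem~\ref{thm; cmalgorithm} and Corollary~\ref{thm; csmalgorithm}, or via some geometric mechanism forcing the low-dimensional pieces of $c_{SM}(\tau_{m,n,k}^\circ)$ to vanish). Your Microlocal Index Formula argument, while a correct proof of Theorem~\ref{thm; charcycle} and essentially identical to the paper's own proof of that theorem, provides no such mechanism.
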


For $\tau_{m,n,k}$, we write its $c_{SM}$ class as
\[
c_{SM}(\tau_{m,n,k})=\sum_{l=0}^{mn-1} \eta_l(m,n,k) H^{mn-1-l}=\sum_{l=0}^{mn-1} \eta_l(m,n,k)[\bbP^l]
\]
Notice that $\tau_{m,n,k}=\sqcup_{i=k}^{n-1} \tau^{\circ}_{m,n,i}$, hence $c_{SM}(\tau_{m,n,k})=\sum_{i=k}^{n-1} c_{SM}(\tau^{\circ}_{m,n,i})$. 
In terms of coefficients, we have $\eta_l(m,n,k)=\sum_{i=k}^{n-1} \eta^{\circ}_l(m,n,i)$ and $\eta_l^{\circ}(m,n,k)=\eta_l(m,n,k)-\eta_l(m,n,k+1)$.
Therefore the above conjectures imply that :
\begin{enumerate}
\item The coefficients $\eta_l(m,n,k)$ are non-negative for $0\leq l\leq mn-1$.
\item For $l=0,1,2,\cdots, n-k-2$,
\[
\eta_l(m,n,k)=\eta_l(m,n,k+1).
\]
\end{enumerate}

Also, recall that (Theorem~\ref{thm; eulerobstruction})
\[
Eu_{\tau_{m,n,k}}=\sum_{i=0}^{n-1-k} \binom{k+i}{k} \ind_{\tau_{m,n,k+i}^\circ},
\]
then we have
\[
c_M(\tau_{m,n,k})=c_*(Eu_{\tau_{m,n,k}})=\sum_{i=0}^{n-1-k} \binom{k+i}{k} c_*(\ind_{\tau_{m,n,k+i}^\circ})=\sum_{i=0}^{n-1-k} \binom{k+i}{k} c_{SM}({\tau_{m,n,k+i}^\circ}).
\]
Thus the above conjecture implies that the coefficients in the expression of $c_M(\tau_{m,n,k})$ are also non-negative for $0\leq l\leq mn-1$.

\bibliography{eu}

\end{document}